\DeclareMathOperator*{\argmin}{argmin}
\DeclareMathOperator{\EE}{\mathbb{E}}
\DeclarePairedDelimiter\floor{\lfloor}{\rfloor}
\DeclarePairedDelimiterX\inner[2]{\langle}{\rangle}{{#1},{#2}}
\DeclarePairedDelimiter\norm{\|}{\|}
\DeclarePairedDelimiter\set{\{}{\}}
\DeclarePairedDelimiter\prn{(}{)}
\DeclarePairedDelimiter\bra{[}{]}
\DeclarePairedDelimiterX\Set[2]{\{}{\}}{\mspace{2mu}{#1}\;\delimsize|\;{#2}\mspace{2mu}}
\DeclarePairedDelimiterX\Prn[2]{(}{)}{\mspace{2mu}{#1}\;\delimsize|\;{#2}\mspace{2mu}}
\DeclarePairedDelimiterX\Bra[2]{[}{]}{\mspace{2mu}{#1}\;\delimsize|\;{#2}\mspace{2mu}}
\newcommand{\R}{\mathbb R}
\newcommand{\0}{\mathbf 0}
\renewcommand{\epsilon}{\varepsilon}
\NewDocumentCommand{\exsub}{s m O{} m}{%
  \IfBooleanT{#1}{\EE_{#2}\nolimits\bra*{#4}}%
  \IfBooleanF{#1}{\EE_{#2}\nolimits\bra[#3]{#4}}%
}
\NewDocumentCommand{\ex}{s O{} m}{%
  \IfBooleanT{#1}{\EE\nolimits\bra*{#3}}%
  \IfBooleanF{#1}{\EE\nolimits\bra[#2]{#3}}%
}
\NewDocumentCommand{\cex}{s O{} m m}{%
  \IfBooleanT{#1}{\EE\nolimits\Bra*{#3}{#4}}%
  \IfBooleanF{#1}{\EE\nolimits\Bra[#2]{#3}{#4}}%
}
\newcommand{\mathInd}{\hphantom{{}={}}}
\newcommand{\by}[2][]{\text{\pbox[c]{\textwidth}{(by \pbox[t]{\textwidth}{\,\!#2)#1}}}}
\newcommand{\email}[1]{\href{mailto:#1}{\nolinkurl{#1}}}
\declaretheoremstyle[
  shaded={bgcolor=gray!15},
]{thmsty}
\declaretheoremstyle[
]{thmstynoshade}
\declaretheorem[
  name=Theorem,
  refname={Theorem,Theorems},
  style=thmsty,
]{theorem}
\declaretheorem[
  name=Proposition,
  refname={Proposition,Propositions},
  style=thmsty,
]{proposition}
\declaretheorem[
  name=Lemma,
  refname={Lemma,Lemmas},
  style=thmsty,
]{lemma}
\declaretheorem[
  name=Definition,
  refname={Definition,Definitions},
  style=thmsty,
]{definition}
\declaretheorem[
  name=Assumption,
  refname={Assumption,Assumptions},
  style=thmsty,
]{assumption}
\declaretheorem[
  name=Remark,
  refname={Remark,Remarks},
  style=thmstynoshade,
]{remark}
\crefname{algorithm}{Algorithm}{Algorithms}
\crefname{line}{Line}{Lines}
\crefname{section}{Section}{Sections}
\crefname{appendix}{Appendix}{Appendices}
\crefname{table}{Table}{Tables}
\crefname{figure}{Figure}{Figures}
\crefname{equation}{}{}
\Crefname{equation}{Eq.}{Eqs.}
\setlist[itemize]{
  topsep=0.4\baselineskip,
  itemsep=0\baselineskip,
  leftmargin=1.5em,
}
\setlist[enumerate]{
  font=\upshape,
  label=(\alph*),
  ref=(\alph*),
  topsep=0.4\baselineskip,
  itemsep=0\baselineskip,
  leftmargin=2em,
}
\newlist{enuminasm}{enumerate}{1} 
\setlist[enuminasm]{
  font=\upshape,
  label=(\alph*),
  ref=\theassumption(\alph*),
  topsep=0.4\baselineskip,
  itemsep=0\baselineskip,
  leftmargin=2em,
}
\newlist{enuminthm}{enumerate}{1}
\setlist[enuminthm]{
  font=\upshape,
  label=(\alph*),
  ref=\thetheorem(\alph*),
  topsep=0.4\baselineskip,
  itemsep=0\baselineskip,
  leftmargin=2em,
}
\newlist{enuminlem}{enumerate}{1}
\setlist[enuminlem]{
  font=\upshape,
  label=(\alph*),
  ref=\thelemma(\alph*),
  topsep=0.4\baselineskip,
  itemsep=0\baselineskip,
  leftmargin=2em,
}
\newlist{enuminprop}{enumerate}{1}
\setlist[enuminprop]{
  font=\upshape,
  label=(\alph*),
  ref=\theproposition(\alph*),
  topsep=0.4\baselineskip,
  itemsep=0\baselineskip,
  leftmargin=2em,
}
\newlist{enumincond}{enumerate}{1}
\setlist[enumincond]{
  font=\upshape,
  label=(\alph*),
  ref=\thecondition(\alph*),
  topsep=0.4\baselineskip,
  itemsep=0\baselineskip,
  leftmargin=2em,
}
\newcommand{\xinit}{x_{\mathrm{init}}}
\newcommand{\Ltrue}{L}
\newcommand{\Htrue}[1]{H_{#1}}
\newcommand{\Lest}{\ell}
\newcommand{\Hest}{h}
\newcommand{\Lmax}{\bar{\Lest}}
\newcommand{\Linit}{\Lest_{\mathrm{init}}}
\newcommand{\xbest}{x^\star}
\newcommand{\Proposed}{\textsf{Proposed}\xspace}
\newcommand{\GD}{\textsf{GD}\xspace}
\newcommand{\JNJ}{\textsf{JNJ2018}\xspace}
\newcommand{\LL}{\textsf{LL2022}\xspace}
\newcommand{\MT}{\textsf{MT2022}\xspace}
\newcommand{\LBFGS}{\textsf{L-BFGS}\xspace}
\newcommand{\reviset}[1]{#1}
\newcommand{\revise}[1]{#1}
\date{\vspace{-2.5\baselineskip}}
\author[1]{Naoki Marumo\footnote{Corresponding author. E-mail: \email{marumo@mist.i.u-tokyo.ac.jp}}}
\author[1,2]{Akiko Takeda}
\affil[1]{Graduate School of Information Science and Technology, University of Tokyo, Tokyo, Japan}
\affil[2]{Center for Advanced Intelligence Project, RIKEN, Tokyo, Japan}
\title{Universal heavy-ball method for nonconvex optimization under H\"older continuous Hessians}
\begin{document}
\maketitle

\begin{abstract}
  We propose a new first-order method for minimizing nonconvex functions with Lipschitz continuous gradients and H\"older continuous Hessians. The proposed algorithm is a heavy-ball method equipped with two particular restart mechanisms. It finds a solution where the gradient norm is less than $\epsilon$ in $O(H_{\nu}^{\frac{1}{2 + 2 \nu}} \epsilon^{- \frac{4 + 3 \nu}{2 + 2 \nu}})$ function and gradient evaluations, where $\nu \in [0, 1]$ and $H_{\nu}$ are the H\"older exponent and constant, respectively. This complexity result covers the classical bound of $O(\epsilon^{-2})$ for $\nu = 0$ and the state-of-the-art bound of $O(\epsilon^{-7/4})$ for $\nu = 1$. Our algorithm is $\nu$-independent and thus universal; it automatically achieves the above complexity bound with the optimal $\nu \in [0, 1]$ without knowledge of $H_{\nu}$. In addition, the algorithm does not require other problem-dependent parameters as input, including the gradient's Lipschitz constant or the target accuracy $\epsilon$. Numerical results illustrate that the proposed method is promising.
\end{abstract}

\smallskip
\begin{description}[leftmargin=!,labelwidth=\widthof{\bfseries Keywords:}]
  \item[Keywords:]
  First-order method,
  Heavy-ball method,
  Momentum method,
  Worst-case complexity,
  H\"older continuity
  \item[MSC2020:]
  90C26, 90C30, 65K05, 90C06
\end{description}
\smallskip

\section{Introduction}
\label{sec:introduction}
This paper studies general nonconvex optimization problems:
\begin{align}
  \min_{x \in \R^d} \ 
  f(x),
\end{align}
where $f \colon \R^d \to \R$ is twice differentiable and lower bounded, i.e., $\inf_{x \in \R^d} f(x) > - \infty$.
Throughout the paper, we impose the following assumption of Lipschitz continuous gradients.
\begin{assumption}
  \label{asm:gradient_lip}
  There exists a constant $L > 0$ such that $\norm*{\nabla f(x) - \nabla f(y)} \leq \Ltrue \norm*{x - y}$ for all $x, y \in \R^d$.
\end{assumption}

\emph{First-order methods} \citep{beck2017first,lan2020first}, which access $f$ through function and gradient evaluations, have gained increasing attention because they are suitable for large-scale problems.
A classical result is that the gradient descent method finds an $\epsilon$-stationary point (i.e., $x \in \R^d$ where $\norm*{\nabla f(x)} \leq \epsilon$) in $O(\epsilon^{-2})$ function and gradient evaluations under \cref{asm:gradient_lip}.
Recently, more sophisticated first-order methods have been developed to achieve faster convergence for more smooth functions.
Such methods \reviset{\citep{carmon2017convex,xu2017neon+,allen2018neon2,jin2018accelerated,li2022restarted,marumo2022parameter,li2023restarted}} have complexity bounds of $O(\epsilon^{-7/4})$ or $\tilde O(\epsilon^{-7/4})$ under Lipschitz continuity of Hessians in addition to gradients.%
\footnote{
  The $\tilde O$-notation hides polylogarithmic factors in $\epsilon^{-1}$.
  For example, the method of \citep{jin2018accelerated} has a complexity bound of $O(\epsilon^{-7/4} (\log \epsilon^{-1})^6)$.
}

This research stream raises two natural questions:
\begin{enumerate}
  [
    label={Question \arabic*.},
    ref={\arabic*},
    leftmargin=*
  ]
  \item
  \label{item:question_interpolation}
  How fast can first-order methods converge under smoothness assumptions stronger than Lipschitz continuous gradients but weaker than Lipschitz continuous Hessians?
  \item
  \label{item:question_universality}
  Can a \emph{single} algorithm achieve both of the following complexity bounds:
  $O(\epsilon^{-2})$ for functions with Lipschitz continuous gradients and
  $O(\epsilon^{-7/4})$ for functions with Lipschitz continuous gradients and Hessians?
\end{enumerate}
Question~\ref{item:question_universality} is also crucial from a practical standpoint because it is often challenging for users of optimization methods to check whether a function of interest has a Lipschitz continuous Hessian.
It would be nice if there were no need to use several different algorithms to achieve faster convergence.

Motivated by the questions, we propose a new first-order method and provide its complexity analysis with the \emph{H\"older continuity} of Hessians.
H\"older continuity generalizes Lipschitz continuity and has been widely used for complexity analyses of optimization methods \citep{devolder2014first,lan2015bundle,nesterov2015universal,grapiglia2017regularized,cartis2017worst,cartis2019universal,dvurechensky2017gradient,ghadimi2019generalized,grapiglia2019accelerated,grapiglia2020tensor}.
Several properties and an example of H\"older continuity can be found in \citep[Section~2]{grapiglia2017regularized}.
\begin{definition}
  \label{def:holder_constant}
  The H\"older constant of $\nabla^2 f$ with exponent $\nu \in [0, 1]$ is defined by
  \begin{align}
    \Htrue{\nu}
    \coloneqq
    \sup_{x,y \in \R^d,\,x \neq y}
    \frac{\norm{\nabla^2 f(x) - \nabla^2 f(y)}}{\norm*{x - y}^\nu}.
    \label{eq:def_holder_constant}
  \end{align}
  The Hessian $\nabla^2 f$ is said to be H\"older continuous with exponent $\nu$, or \emph{$\nu$-H\"older}, if $H_\nu < +\infty$.
\end{definition}
We should emphasize that $f$ determines the value of $\Htrue{\nu}$ for each $\nu \in [0, 1]$ and that $\nu$ is not a constant determined by $f$.
Under \cref{asm:gradient_lip}, we have $\Htrue{0} \leq 2 \Ltrue$ because the assumption implies $\norm{\nabla^2 f(x)} \leq \Ltrue$ for all $x \in \R^d$ \citep[Lemma~1.2.2]{nesterov2004introductory}.
For $\nu \in (0, 1]$, we may have $H_\nu = +\infty$, but we will allow it.
In contrast, all existing first-order methods \reviset{\citep{carmon2017convex,xu2017neon+,allen2018neon2,jin2018accelerated,li2022restarted,marumo2022parameter,li2023restarted}} with complexity bounds of $O(\epsilon^{-7/4})$ or $\tilde O(\epsilon^{-7/4})$ assume $\Htrue{1} < + \infty$ (i.e., the Lipschitz continuity of $\nabla^2 f$) in addition to \cref{asm:gradient_lip}.
We should note that it is often difficult to compute the H\"older constant $\Htrue{\nu}$ of a real-world function for a given $\nu \in [0, 1]$.

The proposed first-order method is a heavy-ball method equipped with two particular restart mechanisms, enjoying the following advantages:
\begin{itemize}
  \item 
  For $\nu \in [0, 1]$ \revise{such that $\Htrue{\nu} < + \infty$}, our algorithm finds an $\epsilon$-stationary point in
  \begin{align}
    O \prn[\Big]{
      \Htrue{\nu}^{\frac{1}{2 + 2 \nu}} \epsilon^{- \frac{4 + 3 \nu}{2 + 2 \nu}}
    }
    \label{eq:complexity_intro}
  \end{align}
  function and gradient evaluations under \cref{asm:gradient_lip}.
  This result answers Question~\ref{item:question_interpolation} and covers the classical bound of $O(\epsilon^{-2})$ for $\nu = 0$ and the state-of-the-art bound of $O(\epsilon^{-7/4})$ for $\nu = 1$.
  \item
  The complexity bound~\cref{eq:complexity_intro} is simultaneously attained for \emph{all} $\nu \in [0, 1]$ \revise{such that $\Htrue{\nu} < + \infty$} by a single $\nu$-independent algorithm.
  The algorithm thus automatically achieves the bound with the optimal $\nu \in [0, 1]$ that minimizes \cref{eq:complexity_intro}.
  This result affirmatively answers Question~\ref{item:question_universality}.
  \item
  Our algorithm requires no knowledge of problem-dependent parameters, including the optimal $\nu$, the Lipschitz constant $\Ltrue$, or the target accuracy $\epsilon$.
\end{itemize}

Let us describe our ideas for developing such an algorithm.
We employ the Hessian-free analysis recently developed for Lipschitz continuous Hessians~\citep{marumo2022parameter} to estimate the Hessian's H\"older continuity with only first-order information.
The Hessian-free analysis uses inequalities that include the Hessian's Lipschitz constant $\Htrue{1}$ but not a Hessian matrix itself, enabling us to estimate $\Htrue{1}$.
Extending this analysis to general $\nu$ allows us to estimate the H\"older constant $H_{\nu}$, given $\nu \in [0, 1]$.
We thus obtain an algorithm that requires $\nu$ as input and has the complexity bound~\cref{eq:complexity_intro} for the given $\nu$.
However, the resulting algorithm lacks usability because $\nu$ that minimizes \cref{eq:complexity_intro} is generally unknown.

Our main idea for developing a $\nu$-independent algorithm is to set $\nu = 0$ for the above $\nu$-dependent algorithm.
This may seem strange, but we prove that it works; a carefully designed algorithm for $\nu = 0$ achieves the complexity bound~\cref{eq:complexity_intro} for any $\nu \in [0, 1]$.
Although we design an estimate for $\Htrue{0}$, it also has a relationship with $\Htrue{\nu}$ for $\nu \in (0, 1]$, as will be stated in \cref{prop:Hest_upperbound}.
This proposition allows us to obtain the desired complexity bounds without specifying $\nu$.


To evaluate the numerical performance of the proposed method, we conducted experiments with standard machine-learning tasks.
The results illustrate that the proposed method outperforms state-of-the-art methods.

\paragraph{Notation.}
For vectors $a, b \in \R^d$, let $\inner{a}{b}$ denote the dot product and $\norm{a}$ denote the Euclidean norm.
For a matrix $A \in \R^{m \times n}$, let $\norm{A}$ denote the operator norm, or equivalently the largest singular value.

\section{Related work}
This section reviews previous studies from several perspectives and discusses similarities and differences between them and this work.

\paragraph{Complexity of first-order methods.}
Gradient descent (GD) is known to have a complexity bound of $O(\epsilon^{-2})$ under Lipschitz continuous gradients (e.g., \citep[Example.~1.2.3]{nesterov2004introductory}).
First-order methods \citep{cartis2017worst,ghadimi2019generalized} for H\"older continuous gradients have recently been proposed to generalize the bound; they enjoy bounds of $O(\epsilon^{-\frac{1+\mu}{\mu}})$, where $\mu \in (0, 1]$ is the H\"older exponent of $\nabla f$.
First-order methods have also been studied under stronger assumptions.
The methods of \citep{carmon2017convex,xu2017neon+,allen2018neon2,jin2018accelerated} enjoy complexity bounds of $\tilde O(\epsilon^{-7/4})$ under Lipschitz continuous gradients and Hessians,\footnote{
  \revise{
    We note that some methods \citep{carmon2018accelerated,agarwal2017finding,royer2018complexity,royer2020newton} also attain the complexity of $\tilde O(\epsilon^{-7/4})$, but they employ Hessian-vector multiplications and thus are beyond first-order methods.
  }
}
and the bounds have recently been improved to $O(\epsilon^{-7/4})$ \reviset{\citep{li2022restarted,marumo2022parameter,li2023restarted}}.
This paper generalizes the classical bound of $O(\epsilon^{-2})$ in a different direction from \citep{cartis2017worst,ghadimi2019generalized} and interpolates the existing bounds of $O(\epsilon^{-2})$ and $O(\epsilon^{-7/4})$.
\cref{table:complexity_exponents} compares our complexity results with the existing ones.

\begin{table}[t]
  \centering
  \caption{
    Complexity of first-order methods for nonconvex optimization.
    ``Exponent in complexity'' means $\alpha$ in $O(\epsilon^{- \alpha})$.
  }
  \label{table:complexity_exponents}
  \def\arraystretch{1.1}
  \begin{tabular}{@{}l|ccccc@{}}\toprule
    $\mu$-H\"older gradient & $\mu \in (0, 1]$       & $\mu = 1$   & $\mu = 1$                      & $\mu = 1$\\
    $\nu$-H\"older Hessian  & ---                    & ($\nu = 0$) & $\nu \in [0, 1]$               & $\nu = 1$\\\midrule
    Exponent in complexity  & $\dfrac{1 + \mu}{\mu}$ & $2$         & $\dfrac{4 + 3 \nu}{2 + 2 \nu}$ & $\dfrac{7}{4}$ \\
    Reference / Algorithm
    & \citep{cartis2017worst,ghadimi2019generalized}
    & Gradient descent
    & \textbf{This work}
    & \reviset{\citep{li2022restarted,marumo2022parameter,li2023restarted}} \\
    \bottomrule
  \end{tabular}
\end{table}

\paragraph{Complexity of second-order methods using H\"older continuous Hessians.}
The H\"older continuity of Hessians has been used to analyze second-order methods.
\citet{grapiglia2017regularized} proposed a regularized Newton method that finds an $\epsilon$-stationary point in $O(\epsilon^{-\frac{2 + \nu}{1 + \nu}})$ evaluations of $f$, $\nabla f$, and $\nabla^2 f$, where $\nu \in [0, 1]$ is the H\"older exponent of $\nabla^2 f$.
The complexity bound generalizes previous $O(\epsilon^{-3/2})$ bounds under Lipschitz continuous Hessians \citep{nesterov2006cubic,cartis2011adaptive,cartis2011adaptive2,curtis2017trust}.
We make the same assumption of H\"older continuous Hessians as in \citep{grapiglia2017regularized} but do not compute Hessians in the algorithm.
\cref{table:first_second_order_methods_assumption} summarizes the first-order and second-order methods together with their assumptions.

\begin{table}[t]
  \bigskip
  \centering
  \caption{Nonconvex optimization methods under smoothness assumptions.}
  \label{table:first_second_order_methods_assumption}
  \def\arraystretch{1.1}
  \begin{tabular}{@{}ll|cc@{}}
    \toprule
    Assumption         &                    & First-order method  & Second-order method\\\midrule
    Lipschitz gradient & ($\mu = 1$)        & Gradient descent                                                                                           & --- \\
    H\"older gradient  & ($\mu \in (0, 1]$) & \citep{cartis2017worst,ghadimi2019generalized}                                                             & --- \\
    Lipschitz Hessian  & ($\nu = 1$)        & \reviset{\citep{carmon2017convex,xu2017neon+,allen2018neon2,jin2018accelerated,li2022restarted,marumo2022parameter,li2023restarted}} & \citep{nesterov2006cubic,cartis2011adaptive,cartis2011adaptive2,curtis2017trust} \\
    H\"older Hessian   & ($\nu \in [0, 1]$) & \textbf{This work}                                                                                         & \citep{grapiglia2017regularized} \\
    \bottomrule
  \end{tabular}
\end{table}

\paragraph{Universality for H\"older continuity.}
When H\"older continuity is assumed, it is preferable that algorithms not require the exponent $\nu$ as input because a suitable value for $\nu$ tends to be hard to find in real-world problems.
Such $\nu$-independent algorithms, called \emph{universal methods}, were first developed as first-order methods for convex optimization~\citep{nesterov2015universal,lan2015bundle} and have since been extended to other settings, including higher-order methods or nonconvex problems \citep{grapiglia2017regularized,cartis2017worst,cartis2019universal,dvurechensky2017gradient,ghadimi2019generalized,grapiglia2019accelerated,grapiglia2020tensor}.
Within this research stream, this paper proposes a universal method with a new setting: a first-order method under H\"older continuous Hessians.
Because of the differences in settings, the existing techniques for universality cannot be applied directly; we obtain a universal method by setting $\nu = 0$ for a $\nu$-dependent algorithm, as discussed in \cref{sec:introduction}.

\paragraph{Heavy-ball methods.}
Heavy-ball (HB) methods are a kind of momentum method first proposed by \citet{polyak1964methods} for convex optimization.
Although some complexity results have been obtained for (strongly) convex settings \citep{ghadimi2015global,lessard2016analysis}, they are weaker than the optimal bounds given by Nesterov's accelerated gradient method \citep{nesterov1983method,nesterov2018lectures}.
For nonconvex optimization, HB and its variants~\citep{sutskever2013importance,kingma2015adam,reddi2018convergence,cutkosky2020momentum} have been practically used with great success, especially in deep learning,
\revise{
while studies on theoretical convergence analysis are few \citep{oneill2019behavior,ochs2014ipiano,li2023restarted}.
\citet{oneill2019behavior} analyzed the local behavior of the original HB method, showing that the method is unlikely to converge to strict saddle points.
\citet{ochs2014ipiano} proposed a generalized HB method, iPiano, that enjoys a complexity bound of $O(\epsilon^{-2})$ under Lipschitz continuous gradients, which is of the same order as that of GD.
\citet{li2023restarted} proposed an HB method with a restart mechanism that achieves a complexity bound of $O(\epsilon^{-7/4})$ under Lipschitz continuous gradients and Hessians.
Our algorithm is another HB method with a different restart mechanism that enjoys more general complexity bounds than \citet{li2023restarted}, as discussed in \cref{sec:introduction}.
}

\paragraph{Comparison with \citep{marumo2022parameter}.}
This paper shares some mathematical tools with \citep{marumo2022parameter} because we utilize the Hessian-free analysis introduced in \citep{marumo2022parameter} to estimate Hessian's H\"older continuity.
\revise{While} the analysis in \citep{marumo2022parameter} is for Nesterov's accelerated gradient method under Lipschitz continuous Hessians, \revise{we here analyze Polyak's HB method under H\"older continuity.}
\revise{Thanks to the simplicity of the HB momentum,} our estimate for the H\"older constant \revise{is} easier to compute than the estimate for the Lipschitz constant proposed in~\citep{marumo2022parameter}, which improves the efficiency of our algorithm.
\revise{
We would like to emphasize that a $\nu$-independent algorithm cannot be derived simply by applying the mathematical tools in \citep{marumo2022parameter}.
It should also be mentioned that we have not confirmed that it is impossible or very challenging to develop a $\nu$-independent algorithm with Nesterov's momentum under H\"older continuous Hessians.
}

\revise{
\paragraph{Lower bounds.}
So far, we have discussed upper bounds on complexity, but there are also some studies on its lower bounds.
\citet{carmon2020lower} proved that no deterministic or stochastic first-order method can improve the complexity of $O(\epsilon^{-2})$ with the assumption of Lipschitz continuous gradients alone.
(See \citep[Theorems~1 and 2]{carmon2020lower} for more rigorous statements.)
This result implies that GD is optimal in terms of complexity under Lipschitz continuous gradients.
\citet{carmon2021lower} showed a lower bound of $\Omega(\epsilon^{-12/7})$ for first-order methods under Lipschitz continuous gradients and Hessians.
Compared with the upper bound of $O(\epsilon^{-7/4})$ under the same assumptions, there is still a $\Theta(\epsilon^{-1/28})$ gap.
Closing this gap would be an interesting research question, though this paper does not focus on it.
}

\section{Preliminary results}
The following lemma is standard for the analyses of first-order methods.
\begin{lemma}[{e.g., \citep[Lemma 1.2.3]{nesterov2004introductory}}]
  \label{lem:eq:property_gradlip_obj}
  Under \cref{asm:gradient_lip}, the following holds for any $x, y \in \R^d$:
  \begin{align}
    f(x) - f(y)
    &\leq
    \inner*{\nabla f(y)}{x - y}
    + \frac{\Ltrue}{2} \norm*{x - y}^2.
  \end{align}
\end{lemma}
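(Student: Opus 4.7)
The plan is to prove the descent inequality via the fundamental theorem of calculus along the segment from $y$ to $x$, followed by Cauchy--Schwarz and the gradient Lipschitz condition of \cref{asm:gradient_lip}. First, I would express the increment of $f$ as a one-dimensional integral of the directional derivative,
\begin{align}
f(x) - f(y) = \int_0^1 \inner*{\nabla f(y + t(x-y))}{x - y} \, dt,
\end{align}
which is valid because $f$ is differentiable on $\R^d$ and therefore $t \mapsto f(y + t(x-y))$ is continuously differentiable on $[0,1]$.

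Next, I would subtract $\inner*{\nabla f(y)}{x-y} = \int_0^1 \inner*{\nabla f(y)}{x-y} \, dt$ from both sides to isolate the defect:
\begin{align}
f(x) - f(y) - \inner*{\nabla f(y)}{x-y}
= \int_0^1 \inner*{\nabla f(y + t(x-y)) - \nabla f(y)}{x - y} \, dt.
\end{align}
I would then bound the integrand by Cauchy--Schwarz and invoke \cref{asm:gradient_lip} with the pair $(y + t(x-y),\,y)$, which gives $\norm*{\nabla f(y + t(x-y)) - \nabla f(y)} \leq t \Ltrue \norm*{x - y}$. Combining these two steps yields a pointwise bound of $t \Ltrue \norm*{x-y}^2$ on the integrand.

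Finally, integrating $t$ over $[0,1]$ contributes the factor $\int_0^1 t \, dt = 1/2$, producing the stated constant $\Ltrue/2$ in front of $\norm*{x - y}^2$. There is no essential obstacle here: this is the classical ``descent lemma,'' cited as \citep[Lemma~1.2.3]{nesterov2004introductory}, and the only regularity fact required beyond \cref{asm:gradient_lip} is continuity of $\nabla f$ along the segment, which is immediate from the Lipschitz assumption itself.
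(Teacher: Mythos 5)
Your proof is correct and is precisely the standard argument from the cited reference \citep[Lemma~1.2.3]{nesterov2004introductory}; the paper itself offers no proof and simply cites that source. Nothing to add.
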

This inequality helps estimate the Lipschitz constant $\Ltrue$ and evaluate the decrease in the objective function per iteration.

We also use the following inequalities derived from H\"older continuous Hessians.
\begin{lemma}
  \label{lem:gradient_jensen}
  For any $z_1, \dots, z_n \in \R^d$, let $\bar z \coloneqq \sum_{i=1}^n \lambda_i z_i$, where $\lambda_1,\dots,\lambda_n \geq 0$ and $\sum_{i=1}^n \lambda_i = 1$.
  Then, the following holds for all $\nu \in [0, 1]$ \revise{such that $\Htrue{\nu} < + \infty$}:
  \begin{align}
    \norm*{
      \nabla f(\bar z)
      - \sum_{i=1}^n \lambda_i \nabla f \prn*{ z_i }
    }
    \leq
    \frac{\Htrue{\nu}}{1 + \nu} \sum_{i=1}^n \lambda_i \norm*{z_i - \bar z}^{1 + \nu}
    \leq
    \frac{\Htrue{\nu}}{1 + \nu}
    \prn[\Bigg]{
      \sum_{1 \leq i < j \leq n} \lambda_i \lambda_j \norm*{z_i - z_j}^2
    }^{\frac{1 + \nu}{2}}.
  \end{align}
\end{lemma}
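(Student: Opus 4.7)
The plan is to prove the two inequalities separately; the first comes from a second-order Taylor expansion of $\nabla f$ around $\bar z$ combined with the H\"older bound on $\nabla^2 f$, and the second from applying Jensen's inequality to the concave map $t \mapsto t^{(1+\nu)/2}$ together with the classical identity relating a weighted variance to the weighted pairwise squared distances.

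For the first inequality, I would start from the integral representation
\begin{align}
  \nabla f(z_i) - \nabla f(\bar z) - \nabla^2 f(\bar z)(z_i - \bar z)
  = \int_0^1 \bra*{\nabla^2 f(\bar z + t(z_i - \bar z)) - \nabla^2 f(\bar z)}(z_i - \bar z)\, dt,
\end{align}
and bound each integrand in operator norm by $H_\nu\, t^\nu \norm{z_i - \bar z}^{1+\nu}$ using \cref{def:holder_constant}. Integrating in $t$ gives the Taylor-type remainder bound
\begin{align}
  \norm*{\nabla f(z_i) - \nabla f(\bar z) - \nabla^2 f(\bar z)(z_i - \bar z)}
  \leq \frac{H_\nu}{1+\nu} \norm{z_i - \bar z}^{1+\nu}.
\end{align}
Multiplying by $\lambda_i$, summing over $i$, and using $\sum_i \lambda_i = 1$ together with $\sum_i \lambda_i (z_i - \bar z) = 0$ makes the linear-in-Hessian term vanish, so the triangle inequality yields $\norm{\nabla f(\bar z) - \sum_i \lambda_i \nabla f(z_i)} \leq \frac{H_\nu}{1+\nu} \sum_i \lambda_i \norm{z_i - \bar z}^{1+\nu}$, which is exactly the first inequality.

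For the second inequality, since $(1+\nu)/2 \in [1/2, 1]$, the map $t \mapsto t^{(1+\nu)/2}$ is concave on $[0, \infty)$. Jensen's inequality applied to $t_i = \norm{z_i - \bar z}^2$ with weights $\lambda_i$ gives
\begin{align}
  \sum_i \lambda_i \norm{z_i - \bar z}^{1+\nu}
  \leq \prn*{\sum_i \lambda_i \norm{z_i - \bar z}^2}^{(1+\nu)/2}.
\end{align}
It remains to verify the variance identity $\sum_i \lambda_i \norm{z_i - \bar z}^2 = \sum_{1 \leq i < j \leq n} \lambda_i \lambda_j \norm{z_i - z_j}^2$. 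This follows by expanding both sides and using $\bar z = \sum_i \lambda_i z_i$: the left side equals $\sum_i \lambda_i \norm{z_i}^2 - \norm{\bar z}^2$, while doubling the right side and expanding $\norm{z_i - z_j}^2 = \norm{z_i}^2 + \norm{z_j}^2 - 2\inner{z_i}{z_j}$ yields $2\sum_i \lambda_i \norm{z_i}^2 - 2\norm{\bar z}^2$, so the two sides agree.

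I do not expect any serious obstacle. The only care needed is in the first step: one must use the Taylor expansion around the \emph{mean} $\bar z$ (rather than around each $z_i$) so that the first-order term $\nabla^2 f(\bar z) \sum_i \lambda_i (z_i - \bar z)$ vanishes identically, regardless of $\nabla^2 f(\bar z)$. Using Taylor around the $z_i$'s instead would leave an uncontrolled Hessian-dependent term, which is exactly what the Hessian-free spirit of the analysis forbids.
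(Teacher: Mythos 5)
Your proof is correct and follows essentially the same route as the paper: an integral Taylor expansion of $\nabla f$ around $\bar z$ (so the $\nabla^2 f(\bar z)$ term cancels under the weighted sum), the H\"older bound on the Hessian increment, and the variance identity $\sum_i \lambda_i \norm{z_i-\bar z}^2 = \sum_{i<j}\lambda_i\lambda_j\norm{z_i-z_j}^2$. The only cosmetic difference is in the middle step: you invoke Jensen's inequality for the concave map $t\mapsto t^{(1+\nu)/2}$, whereas the paper applies H\"older's inequality to the splitting $\lambda_i = \lambda_i^{(1-\nu)/2}\lambda_i^{(1+\nu)/2}$; these yield the identical power-mean bound.
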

\begin{lemma}
  \label{lem:trapezoidal_rule_error}
  For all \revise{$x, y \in \R^d$ and $\nu \in [0, 1]$ such that $\Htrue{\nu} < + \infty$}, the following holds:
  \begin{align}
    f(x) - f(y)
    &\leq
    \frac{1}{2} \inner*{\nabla f(x) + \nabla f(y)}{x - y}
    + \frac{2 \Htrue{\nu}}{(1 + \nu) (2 + \nu) (3 + \nu)} \norm*{x - y}^{2 + \nu}.
  \end{align}
\end{lemma}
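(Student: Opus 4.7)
The plan is to derive the inequality as a trapezoidal-rule error bound for the line integral of $\nabla f$ along the segment from $y$ to $x$, then use \cref{lem:gradient_jensen} to bound the integrand.

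First, by the fundamental theorem of calculus, writing $z_t \coloneqq (1-t) y + t x$, I have
\[
  f(x) - f(y)
  = \int_0^1 \inner*{\nabla f(z_t)}{x - y} \, dt.
\]
Since $\int_0^1 (1-t)\,dt = \int_0^1 t\,dt = \tfrac{1}{2}$, subtracting the trapezoidal approximation yields
\[
  f(x) - f(y) - \frac{1}{2}\inner*{\nabla f(x) + \nabla f(y)}{x - y}
  = \int_0^1 \inner*{\nabla f(z_t) - (1-t)\nabla f(y) - t\nabla f(x)}{x - y} \, dt.
\]

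Next I would apply \cref{lem:gradient_jensen} with $n = 2$, $\lambda_1 = 1-t$, $\lambda_2 = t$, $z_1 = y$, $z_2 = x$, so that $\bar z = z_t$. Using the first of the two bounds in that lemma, together with $\|y - z_t\| = t\|x-y\|$ and $\|x - z_t\| = (1-t)\|x-y\|$, the integrand is controlled by
\[
  \norm*{\nabla f(z_t) - (1-t)\nabla f(y) - t\nabla f(x)}
  \leq \frac{H_\nu}{1+\nu}\, t(1-t)\bigl[t^\nu + (1-t)^\nu\bigr]\,\norm*{x-y}^{1+\nu}.
\]
Combining with Cauchy--Schwarz on $\inner{\cdot}{x-y}$ and integrating over $t \in [0,1]$ gives
\[
  f(x) - f(y) - \frac{1}{2}\inner*{\nabla f(x) + \nabla f(y)}{x - y}
  \leq \frac{H_\nu}{1+\nu}\,\norm*{x-y}^{2+\nu}\int_0^1 \bigl[t(1-t)^{1+\nu} + t^{1+\nu}(1-t)\bigr]\, dt.
\]

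The remaining step is a Beta-function computation: each of the two integrals equals $B(2, 2+\nu) = \Gamma(2)\Gamma(2+\nu)/\Gamma(4+\nu) = 1/[(2+\nu)(3+\nu)]$, so the sum is $2/[(2+\nu)(3+\nu)]$. Substituting gives exactly the constant $2 H_\nu / [(1+\nu)(2+\nu)(3+\nu)]$ claimed in the lemma. No step is a real obstacle; the only place to be careful is choosing the first (pointwise) bound in \cref{lem:gradient_jensen} rather than the second, so that the resulting integral factors into two standard Beta integrals rather than a more awkward expression involving $[t(1-t)]^{(1+\nu)/2}$.
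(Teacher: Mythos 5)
Your proof is correct and is essentially identical to the paper's: both use the fundamental theorem of calculus to write $f(x)-f(y)$ as a line integral, subtract the trapezoidal average, apply Cauchy--Schwarz, invoke \cref{lem:gradient_jensen} with $n=2$ and barycentric weights $(t,1-t)$ to bound the integrand, and then compute the resulting Beta integral. The only cosmetic difference is that you factor the coefficient as $t(1-t)[t^\nu+(1-t)^\nu]$ where the paper writes the equivalent $t(1-t)^{1+\nu}+(1-t)t^{1+\nu}$.
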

The proofs are given in \cref{sec:proof_lem_holder_hessian}.
These lemmas generalize \citep[Lemmas 2 and 3]{marumo2022parameter} for Lipschitz continuous Hessians (i.e., $\nu = 1$).
It is important to note that the inequalities in \cref{lem:gradient_jensen,lem:trapezoidal_rule_error} are Hessian-free; they include the Hessian's H\"older constant $\Htrue{\nu}$ but not a Hessian matrix itself.
Accordingly, we can adaptively estimate the H\"older continuity of $\nabla^2 f$ in the algorithm without computing Hessians.

\section{Algorithm}
The proposed method, \cref{alg:proposed}, is a heavy-ball (HB) method equipped with two particular restart schemes.
In the algorithm, the iteration counter $k$ is reset to $0$ when HB restarts on \cref{alg-line-hb:restart_unsuccessful} or \ref{alg-line-hb:restart_successful}, whereas the total iteration counter $K$ is not.
We refer to the period between one reset of $k$ and the next reset as an \emph{epoch}.
Note that it is unnecessary to implement $K$ in the algorithm; it is included here only to make the statements in our analysis concise.

The algorithm uses estimates $\Lest$ and $\Hest_k$ for the Lipschitz constant $\Ltrue$ and the H\"older constant $\Htrue{0}$.
The estimate $\Lest$ is fixed during an epoch, while $\Hest_k$ is updated at each iteration, having the subscript $k$.

\subsection{Update of solutions}
With an estimate $\Lest$ for the Lipschitz constant $\Ltrue$, \cref{alg:proposed} defines a solution sequence $(x_k)$ as follows: $v_0 = \0$ and
\begin{align}
  v_k &= \theta_{k-1} v_{k-1} - \frac{1}{\Lest} \nabla f(x_{k-1}),
  \label{eq:update_v}\\
  x_k &= x_{k-1} + v_k
  \label{eq:update_x}
\end{align}
for $k \geq 1$.
Here, $(v_k)$ is the velocity sequence, and $0 \leq \theta_k \leq 1$ is the momentum parameter.
Let $x_{-1} \coloneqq x_0$ for convenience, which makes \cref{eq:update_x} valid for $k = 0$.
This type of optimization method is called a heavy-ball method or Polyak's momentum method.

In this paper, we use the simplest parameter setting:
\begin{align}
  \theta_k = 1
\end{align}
for all $k \geq 1$.
\revise{
  Our choice of $\theta_k$ differs from the existing ones; the existing complexity analyses \citep{polyak1964methods,lessard2016analysis,danilova2020non,ghadimi2015global,danilova2021averaged,li2023restarted} of HB prohibit $\theta_k = 1$.
  For example, \citet{li2023restarted} proposed $\theta_k = 1 - 5 (\Htrue{1} \epsilon)^{1/4} / \sqrt{\Ltrue}$.
}
Our new proof technique described later in \cref{sec:analysis_objective_decrease} enables us to set $\theta_k = 1$.

We will later use the averaged solution
\begin{align}
  \bar x_k
  \coloneqq
  \frac{1}{k}
  \sum_{i=0}^{k-1} x_i
  \label{eq:def_xbar}
\end{align}
to compute the estimate $h_k$ for $\Htrue{0}$ and set the best solution $\xbest_k$.
The averaged solution can be computed efficiently with a simple recursion: $\bar x_{k+1} = \frac{k}{k+1} \bar x_k + \frac{1}{k+1} x_k$.

\begin{algorithm}[t]
  \caption{Proposed heavy-ball method\label{alg:proposed}}
  \begin{algorithmic}[1]
  \Require{%
    $\xinit \in \R^d$,
    $\Linit > 0$,
    $\alpha > 1$,
    $0 < \beta \leq 1$.
    Recommended: $(\Linit, \alpha, \beta) = (10^{-3}, 2, 0.1)$
  }
  \State{%
    $(x_0, v_0) \gets (\xinit, \0)$,
    $\Lest \gets \Linit$,
    $k \gets 0$,
    $K \gets 0$
  }
  \Repeat
    \State{%
      $k \gets k + 1$,
      $K \gets K + 1$
    }
    \label{alg-line-hb:update_k}
    \State $v_k \gets v_{k-1} - \frac{1}{\Lest} \nabla f(x_{k-1})$
    \label{alg-line-hb:update_v}
    \State $x_k \gets x_{k-1} + v_k$
    \label{alg-line-hb:update_x}
    \State $\xbest_k \gets \argmin_{x \in \set{x_0,\dots,x_k,\bar x_1,\dots,\bar x_k}} f(x)$
    \Comment{$\bar x_k$ is defined by \cref{eq:def_xbar}}
    \If{Condition \cref{eq:armijo_rule} does not hold}
    \label{alg-line-hb:check_decrease}
      \State{%
        $(x_0, v_0) \gets (\xbest_k, \0)$,
        $\Lest \gets \alpha \Lest$,
        $k \gets 0$
      }
      \label{alg-line-hb:restart_unsuccessful}
    \ElsIf{$k (k+1) \Hest_k > \frac{3}{8} \Lest$}
    \label{alg-line-hb:check_large_k}
    \Comment{$\Hest_k$ is defined by \cref{eq:def_Hest}}
      \State{%
        $(x_0, v_0) \gets (\xbest_k, \0)$,
        $\Lest \gets \beta \Lest$,
        $k \gets 0$
      }
      \label{alg-line-hb:restart_successful}
    \EndIf
    \Until{convergence}
    \State \Return $\xbest_k$
  \end{algorithmic}
\end{algorithm}

\subsection{Estimation of H\"older continuity}
\label{sec:estimation_holder}
Let 
\begin{align}
  S_k
  \coloneqq
  \sum_{i=1}^k \norm*{v_i}^2
  \label{eq:def_Sk}
\end{align}
to simplify the notation.
Our analysis uses the following inequalities due to \cref{lem:trapezoidal_rule_error,lem:gradient_jensen}.
\begin{lemma}
  \label{lem:two_inequalities_for_analysis_holder}
  For all \revise{$k \geq 1$ and $\nu \in [0, 1]$ such that $\Htrue{\nu} < + \infty$}, the following hold:
  \begin{align}
    f(x_k) - f(x_{k-1})
    &\leq
    \frac{1}{2} \inner{\nabla f(x_{k-1}) + \nabla f(x_k)}{v_k}
    + \frac{2 \Htrue{\nu}}{(1 + \nu) (2 + \nu) (3 + \nu)} \norm*{v_k}^{2 + \nu},
    \label{eq:f_decrease_bound_holder}\\
    \norm*{ \nabla f (\bar x_k) }
    &\leq
    \frac{\Lest}{k} \norm*{v_k}
    + \frac{\Htrue{\nu}}{1 + \nu} \prn*{ \frac{k S_k}{8} }^{\frac{1 + \nu}{2}}.
    \label{eq:grad_norm_xbar_upperbound}
  \end{align}
\end{lemma}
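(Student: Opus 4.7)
The plan is to prove the two inequalities separately; the first one is an almost immediate application of \cref{lem:trapezoidal_rule_error}, while the second one requires combining \cref{lem:gradient_jensen} with a telescoping identity for $\nabla f$ coming from the heavy-ball update, plus a combinatorial bound on pairwise squared distances.

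For \cref{eq:f_decrease_bound_holder}, I would apply \cref{lem:trapezoidal_rule_error} with $x \coloneqq x_k$ and $y \coloneqq x_{k-1}$. Since the update rule \cref{eq:update_x} gives $x_k - x_{k-1} = v_k$, the inequality follows at once. No further work is needed here.

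For \cref{eq:grad_norm_xbar_upperbound}, the plan is to apply the second part of \cref{lem:gradient_jensen} to the $k$ iterates $x_0,\dots,x_{k-1}$ with uniform weights $\lambda_i = 1/k$, so that the convex combination equals $\bar x_k$ by \cref{eq:def_xbar}. This yields
\begin{align}
  \norm[\Big]{\nabla f(\bar x_k) - \tfrac{1}{k}\tsum_{i=0}^{k-1}\nabla f(x_i)}
  \leq
  \frac{\Htrue{\nu}}{1+\nu}
  \prn[\Big]{\tfrac{1}{k^2}\tsum_{0 \leq i < j \leq k-1}\norm{x_i - x_j}^2}^{\frac{1+\nu}{2}}.
\end{align}
Next I would simplify the averaged gradient. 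Because $\theta_j = 1$, \cref{eq:update_v} rearranges to $\nabla f(x_{j-1}) = \Lest (v_{j-1} - v_j)$, so the sum telescopes: $\sum_{i=0}^{k-1}\nabla f(x_i) = \Lest(v_0 - v_k) = -\Lest v_k$, using $v_0 = \0$. Together with the triangle inequality, this produces the first term $\frac{\Lest}{k}\norm{v_k}$ in \cref{eq:grad_norm_xbar_upperbound}.

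The remaining obstacle, and the only nontrivial step, is to bound $\sum_{0 \leq i < j \leq k-1}\norm{x_i - x_j}^2$ by $k^3 S_k/8$. For this I would first write $x_j - x_i = \sum_{m=i+1}^{j} v_m$ by telescoping \cref{eq:update_x}, and apply Cauchy--Schwarz to get $\norm{x_i - x_j}^2 \leq (j-i)\sum_{m=i+1}^{j}\norm{v_m}^2$. Swapping the order of summation,
\begin{align}
  \tsum_{0 \leq i < j \leq k-1}(j-i)\tsum_{m=i+1}^{j}\norm{v_m}^2
  = \tsum_{m=1}^{k-1}\norm{v_m}^2 \tsum_{i=0}^{m-1}\tsum_{j=m}^{k-1}(j-i)
  = \tsum_{m=1}^{k-1}\norm{v_m}^2 \cdot \tfrac{k m (k-m)}{2},
\end{align}
where the inner double sum evaluates to $km(k-m)/2$ by elementary arithmetic. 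Finally, using $m(k-m) \leq k^2/4$ bounds the coefficient by $k^3/8$, giving $\sum_{0 \leq i < j \leq k-1}\norm{x_i - x_j}^2 \leq (k^3/8) S_k$. Substituting this into the displayed bound above yields \cref{eq:grad_norm_xbar_upperbound}. I expect this combinatorial counting to be the main (though still elementary) piece of work; the key insight is that setting $\theta_k = 1$ is what makes the gradient sum telescope so cleanly.
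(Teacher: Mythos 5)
Your proposal is correct and follows essentially the same approach as the paper: apply \cref{lem:trapezoidal_rule_error} for the first inequality, and for the second, combine \cref{lem:gradient_jensen} with uniform weights, the telescoping identity $\sum_{i=0}^{k-1}\nabla f(x_i) = -\Lest v_k$ (enabled by $\theta_k = 1$), and the Cauchy--Schwarz plus sum-interchange argument giving $\sum_{0 \leq i < j < k}\norm{x_i - x_j}^2 \leq \tfrac{k^3}{8}S_k$. The inner-sum evaluation $\sum_{i=0}^{m-1}\sum_{j=m}^{k-1}(j-i) = \tfrac{km(k-m)}{2}$ and the bound $m(k-m) \leq k^2/4$ also match the paper's calculation.
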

\begin{proof}
  \Cref{eq:f_decrease_bound_holder} follows immediately from \cref{lem:trapezoidal_rule_error}.
  The proof of \cref{eq:grad_norm_xbar_upperbound} is given in \cref{sec:proof_eq_grad_norm_xbar_upperbound}.
\end{proof}

\cref{alg:proposed} requires no information on the H\"older continuity of $\nabla^2 f$, automatically estimating it.
To illustrate the trick, let us first consider a prototype algorithm that works when a value of $\nu \in [0, 1]$ such that $\Htrue{\nu} < + \infty$ is given.
Given such a $\nu$, one can compute an estimate $h$ of $\Htrue{\nu}$ such that
\begin{align}
  f(x_k) - f(x_{k-1})
  &\leq
  \frac{1}{2} \inner{\nabla f(x_{k-1}) + \nabla f(x_k)}{v_k}
  + \frac{2 \Hest}{(1 + \nu) (2 + \nu) (3 + \nu)} \norm*{v_k}^{2 + \nu},
  \label{eq:f_decrease_bound_holder_nu}\\
  \norm*{ \nabla f (\bar x_k) }
  &\leq
  \frac{\Lest}{k} \norm*{v_k}
  + \frac{\Hest}{1 + \nu} \prn*{ \frac{k S_k}{8} }^{\frac{1 + \nu}{2}},
  \label{eq:grad_norm_xbark_upperbound_nu}
\end{align}
which come from \cref{lem:two_inequalities_for_analysis_holder}.
This estimation scheme yields a $\nu$-dependent algorithm that has the complexity bound~\cref{eq:complexity_intro} for the given $\nu$, though we will omit the details.
The algorithm is not so practical because it requires $\nu \in [0, 1]$ such that $\Htrue{\nu} < + \infty$ as input.
However, perhaps surprisingly, setting $\nu = 0$ for the $\nu$-dependent algorithm gives a $\nu$-independent algorithm that achieves the bound~\cref{eq:complexity_intro} for \emph{all} $\nu \in [0, 1]$.
\cref{alg:proposed} is the $\nu$-independent algorithm obtained in that way.

Let $\Hest_0 \coloneqq 0$ for convenience.
At iteration $k \geq 1$ of each epoch, we use the estimate $\Hest_k$ for $\Htrue{0}$ defined by
\begin{align}
  \Hest_k
  =
  \max \bigg\{
    h_{k-1},\ 
    &
    \frac{3}{\norm*{v_k}^2}
    \prn[\Big]{
      f(x_k) - f(x_{k-1})
      - \frac{1}{2} \inner{\nabla f(x_{k-1}) + \nabla f(x_k)}{v_k}
    }
    ,\\
    &
    \sqrt{\frac{8}{k S_k}}
    \prn*{ \norm*{ \nabla f (\bar x_k) } - \frac{\Lest}{k} \norm*{v_k} }
  \bigg\}
  \label{eq:def_Hest}
\end{align}
so that $\Hest_k \geq \Hest_{k-1}$ and
\begin{align}
  f(x_k) - f(x_{k-1})
  &\leq
  \frac{1}{2} \inner{\nabla f(x_{k-1}) + \nabla f(x_k)}{v_k}
  + \frac{h_k}{3} \norm*{v_k}^2,
  \label{eq:f_decrease_bound_holder_Hestk}\\
  \norm*{ \nabla f (\bar x_k) }
  &\leq
  \frac{\Lest}{k} \norm*{v_k}
  + \Hest_k \sqrt{\frac{k S_k}{8}}.
  \label{eq:grad_norm_xbark_upperbound_Hestk}
\end{align}
The above inequalities were obtained by plugging $\nu = 0$ into \cref{eq:f_decrease_bound_holder_nu,eq:grad_norm_xbark_upperbound_nu}.

Although we designed $\Hest_k$ to estimate $\Htrue{0}$, it fortunately also relates to $\Htrue{\nu}$ for general $\nu \in [0, 1]$.
The following upper bound on $\Hest_k$ shows the relationship between $\Hest_k$ and $\Htrue{\nu}$, which will be used in the complexity analysis.
\begin{proposition}
  \label{prop:Hest_upperbound}
  For all \revise{$k \geq 1$ and $\nu \in [0, 1]$ such that $\Htrue{\nu} < + \infty$}, the following holds:
  \begin{align}
    \Hest_k
    \leq
    \Htrue{\nu} (k S_k)^{\frac{\nu}{2}}.
  \end{align}
\end{proposition}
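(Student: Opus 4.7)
\medskip

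The plan is to prove the bound by induction on $k$ within a single epoch, exploiting the three-way maximum structure in the definition~\cref{eq:def_Hest} of $\Hest_k$. Since each epoch begins with the iteration counter reset to $k = 0$ and $\Hest_0 \coloneqq 0$, the base case $k = 1$ is immediate: $\Hest_0 = 0 \leq \Htrue{\nu} S_1^{\nu/2}$.

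For the inductive step, assume $\Hest_{k-1} \leq \Htrue{\nu}((k-1) S_{k-1})^{\nu/2}$ and fix $\nu \in [0,1]$ with $\Htrue{\nu} < +\infty$. Since $\Hest_k$ is the maximum of three quantities, it suffices to bound each of them by $\Htrue{\nu}(k S_k)^{\nu/2}$. First, for the $\Hest_{k-1}$ term, monotonicity of $k S_k$ in $k$ (recall $S_k = S_{k-1} + \norm{v_k}^2$ is nondecreasing and $k \geq k-1$) gives $(k-1) S_{k-1} \leq k S_k$, so the inductive hypothesis yields the desired bound. Second, for the function-decrease term, I would apply \cref{eq:f_decrease_bound_holder} from \cref{lem:two_inequalities_for_analysis_holder} to obtain
\begin{align*}
  \frac{3}{\norm{v_k}^2}\Prn*{f(x_k) - f(x_{k-1}) - \tfrac{1}{2}\inner{\nabla f(x_{k-1}) + \nabla f(x_k)}{v_k}}{}
  &\leq \frac{6 \Htrue{\nu}}{(1+\nu)(2+\nu)(3+\nu)}\norm{v_k}^{\nu},
\end{align*}
and observe that the prefactor is at most $1$ on $\nu \in [0,1]$ (it equals $1$ at $\nu=0$ and decreases) and that $\norm{v_k}^\nu \leq S_k^{\nu/2} \leq (k S_k)^{\nu/2}$ since $\norm{v_k}^2 \leq S_k$. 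Third, for the gradient-norm term, I would apply \cref{eq:grad_norm_xbar_upperbound} to get
\begin{align*}
  \sqrt{\frac{8}{k S_k}}\Prn*{\norm{\nabla f(\bar x_k)} - \frac{\Lest}{k}\norm{v_k}}{}
  \leq \frac{\Htrue{\nu}}{1+\nu}\sqrt{\frac{8}{k S_k}}\Prn*{\frac{k S_k}{8}}^{(1+\nu)/2}{}
  = \frac{\Htrue{\nu}}{1+\nu}\Prn*{\frac{k S_k}{8}}^{\nu/2}{},
\end{align*}
which is bounded by $\Htrue{\nu}(k S_k)^{\nu/2}$ since $1/(1+\nu) \leq 1$ and $8^{-\nu/2} \leq 1$.

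There is no serious obstacle here; the proof is a clean induction once one notices that the three cases decouple, and that the constants $6/[(1+\nu)(2+\nu)(3+\nu)]$ and $1/(1+\nu)$ are conveniently bounded by $1$ on $[0,1]$. The only mildly delicate point is verifying that $\Hest_k$ remains well-defined and the induction carries through across the $(k-1) \to k$ step using the monotonicity $(k-1)S_{k-1} \leq k S_k$; this is what makes setting $\nu = 0$ in the algorithm design compatible with obtaining the universal bound for all $\nu \in [0,1]$.
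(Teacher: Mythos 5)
Your proof is correct and takes essentially the same route as the paper: both bound the three terms in the maximum defining $\Hest_k$ via \cref{lem:two_inequalities_for_analysis_holder}, absorb the $\nu$-dependent prefactors $\tfrac{6}{(1+\nu)(2+\nu)(3+\nu)}$ and $\tfrac{1}{1+\nu}$ into $1$ using $\nu \geq 0$, use $\norm{v_k}^\nu \leq (k S_k)^{\nu/2}$, and close the induction via the monotonicity of $(k S_k)^{\nu/2}$ in $k$. The only cosmetic remark is that your ``base case $k=1$'' is really just the statement $\Hest_0 = 0$; the other two branches of the max at $k=1$ still need the same estimates as the inductive step, so the argument is more naturally presented as a single uniform step for all $k \geq 1$, exactly as the paper does.
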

\begin{proof}
  \cref{lem:two_inequalities_for_analysis_holder} gives
  \begin{align}
    \frac{3}{\norm*{v_k}^2}
    \prn[\Big]{
      f(x_k) - f(x_{k-1})
      - \frac{1}{2} \inner{\nabla f(x_{k-1}) + \nabla f(x_k)}{v_k}
    }
    &\leq
    \frac{6 \Htrue{\nu}}{(1 + \nu) (2 + \nu) (3 + \nu)}
    \norm*{v_k}^{\nu},\\
    \sqrt{\frac{8}{k S_k}}
    \prn*{ \norm*{ \nabla f (\bar x_k) } - \frac{\Lest}{k} \norm*{v_k} }
    &\leq
    \frac{\Htrue{\nu}}{1 + \nu} \prn*{ \frac{k S_k}{8} }^{\frac{\nu}{2}}
    \leq
    \frac{\Htrue{\nu}}{1 + \nu} (k S_k)^{\frac{\nu}{2}}.
  \end{align}
  Hence, definition \cref{eq:def_Hest} of $\Hest_k$ yields
  \begin{alignat}{2}
    \Hest_k
    &\leq
    \max \set*{
      h_{k-1},\,
      \frac{6 \Htrue{\nu}}{(1 + \nu) (2 + \nu) (3 + \nu)}
      \norm*{v_k}^{\nu}
      ,\,
      \frac{\Htrue{\nu}}{1 + \nu}
      (k S_k)^{\frac{\nu}{2}}
    }\\
    &\leq
    \max \set*{
      h_{k-1},\,
      \Htrue{\nu} \norm*{v_k}^{\nu}
      ,\,
      \Htrue{\nu} (k S_k)^{\frac{\nu}{2}}
    }
    &\quad&\by{$\nu \geq 0$}\\
    &=
    \max \set*{
      h_{k-1},\,
      \Htrue{\nu} \prn*{k S_k}^{\frac{\nu}{2}}
    }
    &\quad&\by{$\norm*{v_k} \leq \sqrt{S_k} \leq \sqrt{k S_k}$}.
  \end{alignat}
  The desired result follows inductively since $\Htrue{\nu} (k S_k)^{\frac{\nu}{2}}$ is nondecreasing in $k$.
\end{proof}

For $\nu = 0$, \cref{prop:Hest_upperbound} gives a natural upper bound, $\Hest_k \leq \Htrue{0}$, since the estimate $\Hest_k$ is designed for $\Htrue{0}$ based on \cref{lem:two_inequalities_for_analysis_holder}.
For $\nu \in (0, 1]$, the upper bound can become tighter when $k S_k$ is small.
Indeed, the iterates $(x_k)$ are expected to move less significantly in an epoch as the algorithm proceeds.
\revise{Accordingly, $(S_k)$ increases more slowly in later epochs,} yielding a tighter upper bound on $\Hest_k$.
This trick improves the complexity bound from $O(\epsilon^{-2})$ for $\nu = 0$ to $O(\epsilon^{- \frac{4 + 3 \nu}{2 + 2 \nu}})$ for general $\nu \in [0, 1]$.

\subsection{Restart mechanisms}
\cref{alg:proposed} is equipped with two restart mechanisms.
The first one uses the standard descent condition
\begin{align}
  f(x_k) - f(x_{k-1})
  \leq
  \inner{\nabla f(x_{k-1})}{v_k} + \frac{\Lest}{2} \norm*{v_k}^2
  \label{eq:armijo_rule}
\end{align}
to check whether the current estimate $\Lest$ for the Lipschitz constant $\Ltrue$ is large enough.
If the descent condition \cref{eq:armijo_rule} does not hold, HB restarts with a larger $\Lest$ from the best solution $\xbest_k \coloneqq \argmin_{x \in \set{x_0,\dots,x_k,\bar x_1,\dots,\bar x_k}} f(x)$ during the epoch.
\revise{
  We consider not only $x_0,\dots,x_k$ but also the averaged solutions $\bar x_1,\dots,\bar x_k$ as candidates for the next starting point because averaging may stabilize the behavior of the HB method.
  As we will show later in \cref{lem:min_grad_xbar_upperbound2}, the gradient norm of averaged solutions is small, which leads to stability.
  For strongly-convex quadratic problems, \citet{danilova2021averaged} also show that averaged HB methods have a smaller maximal deviation from the optimal solution than the vanilla HB method.
  A similar effect for nonconvex problems is expected in the neighborhood of local optima where quadratic approximation is justified.
}

The second restart scheme resets the momentum effect when $k$ becomes large; if
\begin{align}
  k (k+1) \Hest_k > \frac{3}{8} \Lest
  \label{eq:condition_restart}
\end{align}
is satisfied, HB restarts from the best solution $\xbest_k$.
At the restart, we can reset $\Lest$ to a smaller value in the hope of improving practical performance, though decreasing $\Lest$ is not necessary for the complexity analysis.
This restart scheme guarantees that 
\begin{align}
  k (k-1) \Hest_{k-1} \leq \frac{3}{8} \Lest
  \label{eq:continue_condition_k-1}
\end{align}
holds at iteration $k$ of each epoch.

The Lipschitz estimate $\Lest$ increases only when the descent condition~\cref{eq:armijo_rule} is violated.
On the other hand, \cref{lem:eq:property_gradlip_obj} implies that condition \cref{eq:armijo_rule} always holds as long as $\Lest \geq \Ltrue$.
Hence, we have the following upper bound on $\Lest$.
\begin{proposition}
  \label{prop:Lest_upperbound}
  Suppose that \cref{asm:gradient_lip} holds.
  Then, the following is true throughout \cref{alg:proposed}: 
  $\Lest \leq \max \set{\Linit, \alpha \Ltrue}$.
\end{proposition}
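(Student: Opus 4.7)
The plan is a straightforward induction on the total iteration counter $K$. The idea is to observe that $\Lest$ changes only in two places in \cref{alg:proposed}: it is multiplied by $\alpha > 1$ on \cref{alg-line-hb:restart_unsuccessful}, and it is multiplied by $\beta \leq 1$ on \cref{alg-line-hb:restart_successful}. The second change cannot increase $\Lest$, so the only way $\Lest$ can grow is through the first one, which is triggered precisely when the descent condition \cref{eq:armijo_rule} fails at \cref{alg-line-hb:check_decrease}.

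Next, I would invoke \cref{lem:eq:property_gradlip_obj} with $x = x_k$ and $y = x_{k-1}$. Since $x_k - x_{k-1} = v_k$ by the update \cref{eq:update_x}, the lemma gives
\begin{align}
  f(x_k) - f(x_{k-1})
  \leq
  \inner{\nabla f(x_{k-1})}{v_k} + \frac{\Ltrue}{2} \norm*{v_k}^2.
\end{align}
Consequently, whenever $\Lest \geq \Ltrue$ holds just before \cref{alg-line-hb:check_decrease} is executed, condition \cref{eq:armijo_rule} is automatically satisfied, and the branch on \cref{alg-line-hb:restart_unsuccessful} is not entered. Contrapositively, if \cref{alg-line-hb:restart_unsuccessful} is executed, the value of $\Lest$ immediately before the multiplication by $\alpha$ must satisfy $\Lest < \Ltrue$, so the updated value is strictly less than $\alpha \Ltrue$.

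Finally, I would package this as an induction on $K$. The base case is $\Lest = \Linit \leq \max\{\Linit, \alpha \Ltrue\}$. For the inductive step, suppose $\Lest \leq \max\{\Linit, \alpha \Ltrue\}$ at some iteration. If no restart happens, or a restart happens at \cref{alg-line-hb:restart_successful} (which multiplies $\Lest$ by $\beta \leq 1$), the bound is preserved. If a restart happens at \cref{alg-line-hb:restart_unsuccessful}, then by the argument above the previous $\Lest$ satisfied $\Lest < \Ltrue$, hence the new value is at most $\alpha \Ltrue \leq \max\{\Linit, \alpha \Ltrue\}$. This completes the induction.

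There is no real obstacle; the only thing to be slightly careful about is that the check on \cref{alg-line-hb:check_decrease} uses the value of $\Lest$ that was current when $v_k$ and $x_k$ were computed on \cref{alg-line-hb:update_v,alg-line-hb:update_x}, so the application of \cref{lem:eq:property_gradlip_obj} is consistent with the value of $\Lest$ appearing in \cref{eq:armijo_rule}.
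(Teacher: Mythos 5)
Your proof is correct and matches the paper's argument, which likewise observes that $\Lest$ only increases via the $\alpha$-multiplication triggered when \cref{eq:armijo_rule} fails, and that \cref{lem:eq:property_gradlip_obj} guarantees \cref{eq:armijo_rule} holds whenever $\Lest \geq \Ltrue$. The paper leaves the induction implicit, but your write-up is essentially the same reasoning made explicit.
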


\section{Complexity analysis}
This section proves that \cref{alg:proposed} enjoys the complexity bound~\cref{eq:complexity_intro} for all $\nu \in [0, 1]$.

\subsection{Objective decrease for one epoch}
\label{sec:analysis_objective_decrease}
First, we evaluate the decrease in the objective function value during one epoch.
\begin{lemma}
  \label{lem:decrease_epoch}
  Suppose that \cref{asm:gradient_lip} holds and that the descent condition
  \begin{align}
    f(x_i) - f(x_{i-1})
    \leq
    \inner{\nabla f(x_{i-1})}{v_i} + \frac{\Lest}{2} \norm*{v_i}^2
    \label{eq:armijo_rule_i}
  \end{align}
  holds for all $1 \leq i \leq k$.
  Then, the following holds under condition~\cref{eq:continue_condition_k-1}:
  \begin{align}
    \min_{1 \leq i \leq k} f(x_i)
    \leq
    f(x_0) - \frac{\Lest S_k}{4k}.
    \label{eq:decrease_epoch}
  \end{align}
\end{lemma}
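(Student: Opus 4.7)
The plan is to combine the trapezoidal-type inequality~\cref{eq:f_decrease_bound_holder_Hestk} (which holds by construction of $\Hest_i$) with the descent condition~\cref{eq:armijo_rule_i}, exploit a telescoping structure produced by the heavy-ball update, and then average over the iterates to extract a bound on $\min_{1 \le i \le k} f(x_i)$. A single-iterate Lyapunov argument appears insufficient: with $\theta_k = 1$ there is no friction in the HB dynamics, the cross-term $\Lest \inner{v_{i-1}}{v_i}$ is not signed, and $f(x_k) - f(x_0)$ need not even be negative, so averaging is essential.

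First I would substitute the heavy-ball identities $\nabla f(x_{i-1}) = \Lest(v_{i-1} - v_i)$ and $\nabla f(x_i) = \Lest(v_i - v_{i+1})$ into~\cref{eq:f_decrease_bound_holder_Hestk}, rewriting it as $f(x_i) - f(x_{i-1}) \le \tfrac{\Lest}{2}(\inner{v_{i-1}}{v_i} - \inner{v_i}{v_{i+1}}) + \tfrac{\Hest_i}{3}\norm{v_i}^2$. Summing for $i = 1,\dots,j$ (with $0 \le j \le k-1$) telescopes the inner products via $v_0 = \0$ and gives
\begin{align*}
  f(x_j) - f(x_0) \le -\tfrac{\Lest}{2}\inner{v_j}{v_{j+1}} + \tfrac{1}{3}\sum_{i=1}^{j} \Hest_i \norm{v_i}^2.
\end{align*}
To get rid of the residual inner product, I would apply~\cref{eq:armijo_rule_i} at the \emph{next} step: using $\nabla f(x_j) = \Lest(v_j - v_{j+1})$ it becomes $f(x_{j+1}) - f(x_j) \le \Lest \inner{v_j}{v_{j+1}} - \tfrac{\Lest}{2}\norm{v_{j+1}}^2$, so $-\tfrac{\Lest}{2}\inner{v_j}{v_{j+1}} \le -\tfrac{1}{2}(f(x_{j+1}) - f(x_j)) - \tfrac{\Lest}{4}\norm{v_{j+1}}^2$. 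Combining the two produces, for each $0 \le j \le k-1$,
\begin{align*}
  \tfrac{1}{2}f(x_j) + \tfrac{1}{2}f(x_{j+1}) \le f(x_0) - \tfrac{\Lest}{4}\norm{v_{j+1}}^2 + \tfrac{1}{3}\sum_{i=1}^{j} \Hest_i \norm{v_i}^2.
\end{align*}

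The final step is to sum this inequality over $j = 0,\dots,k-1$. Writing $m \coloneqq \min_{1 \le i \le k} f(x_i)$, the left side equals $\tfrac{f(x_0)}{2} + \sum_{j=1}^{k-1}f(x_j) + \tfrac{f(x_k)}{2} \ge \tfrac{f(x_0)}{2} + (k - \tfrac{1}{2}) m$, and the velocity sum collapses to $\tfrac{\Lest S_k}{4}$. Exchanging the order of summation rewrites the remaining double sum as $\tfrac{1}{3}\sum_{i=1}^{k-1}(k-i)\Hest_i\norm{v_i}^2$; using monotonicity $\Hest_i \le \Hest_{k-1}$, the crude bound $k-i \le k-1$, and condition~\cref{eq:continue_condition_k-1} (which supplies $\Hest_{k-1} \le \tfrac{3\Lest}{8k(k-1)}$), this is bounded by $\tfrac{\Lest S_{k-1}}{8k} \le \tfrac{\Lest S_k}{8k}$. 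Rearranging and dividing through by $k - \tfrac{1}{2}$ gives $m \le f(x_0) - \frac{\Lest S_k}{k - 1/2}\bigl(\tfrac{1}{4} - \tfrac{1}{8k}\bigr)$, and a one-line simplification (using $k - \tfrac{1}{2} = \tfrac{2k-1}{2}$ and $\tfrac{1}{4} - \tfrac{1}{8k} = \tfrac{2k-1}{8k}$) turns this into~\cref{eq:decrease_epoch}.

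The main obstacle is that the Armijo condition alone cannot telescope: the HB momentum prevents $f(x_k) - f(x_0)$ from even being negative in general. The key is that the trapezoidal inequality, unlike Armijo, has the symmetric gradient sum $\nabla f(x_{i-1}) + \nabla f(x_i)$, which under the HB update becomes $\Lest(v_{i-1} - v_{i+1})$ and telescopes cleanly; the leftover residual $-\tfrac{\Lest}{2}\inner{v_j}{v_{j+1}}$ is then absorbed by reusing Armijo one step ahead, a bookkeeping trick enabled precisely by the choice $\theta_k = 1$. The restart condition~\cref{eq:continue_condition_k-1} is tuned so that the $\Hest$-contribution is at most half of the target decrease $\Lest S_k/(4k)$.
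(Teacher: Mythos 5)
Your proof is correct and essentially the same as the paper's: both take the identical linear combination (up to an overall factor of two) of the rewritten H\"older inequality~\cref{eq:f_decrease_bound_holder_Hestk} with weight proportional to $k-i$ and the rewritten Armijo condition~\cref{eq:armijo_rule_i} with constant weight, exploit the same cancellation of the $\inner{v_{i-1}}{v_i}$ cross-terms, and invoke~\cref{eq:continue_condition_k-1} at the same final step. The paper organizes this as a single explicit weighted sum of $2k-1$ inequalities, whereas you build it through partial telescoping sums whose residual $-\tfrac{\Lest}{2}\inner{v_j}{v_{j+1}}$ is absorbed by the next-step Armijo inequality before summing over $j$; this is a presentational rather than substantive difference.
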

Before providing the proof, let us remark on the lemma.

Evaluating the decrease in the objective function is the central part of a complexity analysis.
It is also an intricate part because the function value does not necessarily decrease monotonically in nonconvex acceleration methods.
To overcome the non-monotonicity, previous analyses have employed different proof techniques.
For example, \citet{li2022restarted} constructed a quadratic approximation of the objective, diagonalized the Hessian, and evaluated the objective decrease separately for each coordinate; \citet{marumo2022parameter} designed a tricky potential function and showed that it is nearly decreasing.

This paper uses another technique to deal with the non-monotonicity.
We observe that the solution $x_k$ does not need to attain a small function value; it is sufficient for at least one of $x_1,\dots,x_k$ to do so, thanks to our particular restart mechanism.
This observation permits the left-hand side of \cref{eq:decrease_epoch} to be $\min_{1 \leq i \leq k} f(x_i)$ rather than $f(x_k)$ and makes the proof easier.
The proof of \cref{lem:decrease_epoch} calculates a weighted sum of $2k-1$ inequalities derived from \cref{lem:eq:property_gradlip_obj,lem:trapezoidal_rule_error}, which is elementary compared with the existing proofs.
Now, we provide that proof.

\begin{proof}[Proof of \cref{lem:decrease_epoch}]
  Combining \cref{eq:armijo_rule_i} with the update rules \cref{eq:update_v,eq:update_x} yields
  \begin{alignat}{2}
    f(x_i) - f(x_{i-1})
    &\leq
    \inner{\nabla f(x_{i-1})}{v_i} + \frac{\Lest}{2} \norm*{v_i}^2
    =
    \Lest \inner{v_{i-1}}{v_i} - \frac{\Lest}{2} \norm*{v_i}^2
    \label{eq:f_decrease_easy}
  \end{alignat}
  for $1 \leq i \leq k$.
  For $1 \leq i < k$, we also have
  \begin{alignat}{2}
    f(x_i) - f(x_{i-1})
    &\leq
    \frac{1}{2} \inner{\nabla f(x_{i-1}) + \nabla f(x_i)}{v_i}
    + \frac{\Hest_{k-1}}{3} \norm*{v_i}^2
    &\quad&\by{\cref{eq:f_decrease_bound_holder_Hestk} and $h_i \leq h_{k-1}$}\\
    &=
    \frac{\Lest}{2} \inner{v_{i-1}}{v_i}
    - \frac{\Lest}{2} \inner{v_i}{v_{i+1}}
    + \frac{\Hest_{k-1}}{3} \norm*{v_i}^2
    &\quad&\by{\cref{eq:update_v}}.
    \label{eq:f_decrease_bound_holder_Hestk_k-1}
  \end{alignat}
  We will calculate a weighted sum of $2k-1$ inequalities:
  \begin{itemize}
    \item
    \cref{eq:f_decrease_easy} with weight $1$ for $1 \leq i \leq k$,
    \item
    \cref{eq:f_decrease_bound_holder_Hestk_k-1} with weight $2(k-i)$ for $1 \leq i < k$.
  \end{itemize}
  The left-hand side of the weighted sum is
  \begin{align}
    &\mathInd
    \sum_{i=1}^k \prn*{ f(x_i) - f(x_{i-1}) }
    + \sum_{i=1}^{k-1} 2(k-i) \prn*{ f(x_i) - f(x_{i-1}) }\\
    &=
    - (2k-1) f(x_0)
    + \sum_{i=1}^{k-1} 2 f(x_i)
    + f(x_k)
    \geq
    (2k-1) \prn*{
      \min_{1 \leq i \leq k} f(x_i)
      - f(x_0)
    }.
  \end{align}
  On the right-hand side of the weighted sum, some calculations with $v_0 = \0$ show that the inner-product terms of $\inner{v_{i-1}}{v_i}$ cancel out as follows:
  \begin{alignat}{2}
    &\mathInd
    \Lest \sum_{i=1}^k \inner{v_{i-1}}{v_i}
    + \Lest \sum_{i=1}^{k-1} (k-i) \prn*{ \inner{v_{i-1}}{v_i} - \inner{v_i}{v_{i+1}} }\\
    &=
    \Lest \sum_{\reviset{i=2}}^k \inner{v_{i-1}}{v_i}
    + \Lest \sum_{\reviset{i=2}}^{k-1} (k-i) \inner{v_{i-1}}{v_i}
    - \Lest \sum_{i=2}^k (k-i+1) \inner{v_{i-1}}{v_i}
    =
    0.
  \end{alignat}
  The remaining terms on the right-hand side of the weighted sum are
  \begin{alignat}{2}
    &\mathInd
    {- \frac{\Lest}{2}} \sum_{i=1}^k \norm*{v_i}^2
    + \frac{\Hest_{k-1}}{3}
    \sum_{i=1}^{k-1} 2 (k-i) \norm*{v_i}^2\\
    &\leq
    - \frac{\Lest}{2} \sum_{i=1}^k \norm*{v_i}^2
    + \frac{\Hest_{k-1}}{3}
    \sum_{i=1}^k 2 (k-1) \norm*{v_i}^2
    =
    - \prn*{ \frac{\Lest}{2} - \frac{2}{3} (k-1) \Hest_{k-1} } S_k.
  \end{alignat}
  We now obtain
  \begin{align}
    \min_{1 \leq i \leq k} f(x_i) - f(x_0)
    \leq 
    - \prn*{ \frac{\Lest}{2} - \frac{2}{3} (k-1) \Hest_{k-1} }
    \frac{S_k}{2k-1}.
  \end{align}
  Finally, we evaluate the coefficient on the right-hand side with \cref{eq:continue_condition_k-1} as
  \begin{align}
    \frac{\Lest}{2} - \frac{2}{3} (k-1) \Hest_{k-1}
    \geq
    \frac{\Lest}{2} - \frac{\Lest}{4 k}
    =
    \Lest \frac{2k-1}{4k},
    \label{eq:proof_decrease_epoch_final_step}
  \end{align}
  which completes the proof.
\end{proof}

The proof elucidates that the second restart condition~\cref{eq:condition_restart} was designed to derive the lower bound of $\Lest \frac{2k-1}{4k}$ \revise{in \cref{eq:proof_decrease_epoch_final_step}}.

For an epoch that ends at \cref{alg-line-hb:restart_successful} in iteration $k \geq 1$, \cref{lem:decrease_epoch} gives
\begin{align}
  f(\xbest_k)
  \leq
  \min_{1 \leq i \leq k} f(x_i)
  \leq
  f(x_0) - \frac{\Lest S_k}{4k}.
  \label{eq:decrease_epoch_successful}
\end{align}
For an epoch that ends at \cref{alg-line-hb:restart_unsuccessful} in iteration $k \geq 2$, the lemma gives
\begin{align}
  f(\xbest_k)
  \leq
  f(\xbest_{k-1})
  \leq
  \min_{1 \leq i \leq k-1} f(x_i)
  \leq
  f(x_0) - \frac{\Lest S_{k-1}}{4 (k-1)}
  \leq
  f(x_0) - \frac{\Lest S_{k-1}}{4k}.
  \label{eq:decrease_epoch_unsuccessful}
\end{align}
These bounds will be used to derive the complexity bound.

\subsection{Upper bound on gradient norm}
Next, we prove the following upper bound on the gradient norm at the averaged solution.
\begin{lemma}
  \label{lem:min_grad_xbar_upperbound2}
  In \cref{alg:proposed}, the following holds at iteration $k \geq 2$:
  \begin{align}
    \min_{1 \leq i < k}
    \norm*{ \nabla f (\bar x_i) }
    \leq
    \Lest \sqrt{\frac{8 S_{k-1}}{k^3}}.
  \end{align}
\end{lemma}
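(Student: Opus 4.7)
The plan is to combine the Hessian-free bound \cref{eq:grad_norm_xbark_upperbound_Hestk} with the second restart condition and a weighted-minimum argument.

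First I would exploit the fact that we are at iteration $k$ of an epoch: iteration $i < k$ did not trigger the second restart, so $i(i+1)\Hest_i \le \tfrac{3}{8}\Lest$, and the monotonicity of $(\Hest_j)$ then yields the uniform estimate $\Hest_i \le \Hest_{k-1} \le \frac{3\Lest}{8 k (k-1)}$ for every $1 \le i < k$. Substituting this into \cref{eq:grad_norm_xbark_upperbound_Hestk} and using $S_i \le S_{k-1}$, $i \le k-1$, and $k-1 \ge k/2$ (so $\sqrt{8(k-1)} \ge 2\sqrt k$) gives
\begin{align*}
  \Hest_i\sqrt{i S_i / 8} \le \frac{3\Lest}{8 k (k-1)}\sqrt{\frac{(k-1) S_{k-1}}{8}} \le \frac{3\Lest\sqrt{S_{k-1}}}{16\, k^{3/2}},
\end{align*}
uniformly in $i$; the Hölder-estimation contribution already fits comfortably inside the target budget $\Lest\sqrt{8 S_{k-1}/k^3}$.

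Next, using $\min_i(a_i + b_i) \le \min_i a_i + \max_i b_i$ with $a_i = \Lest\norm*{v_i}/i$ and $b_i = \Hest_i\sqrt{i S_i/8}$, the proof reduces to bounding $\min_i \Lest\norm*{v_i}/i$. Here I would apply the weighted-minimum inequality with weights $w_i = i^2$:
\begin{align*}
  \min_{1 \le i < k}\frac{\norm*{v_i}^2}{i^2} \le \frac{\sum_{i=1}^{k-1}\norm*{v_i}^2}{\sum_{i=1}^{k-1} i^2} = \frac{6 S_{k-1}}{(k-1)k(2k-1)}.
\end{align*}
For $k \ge 3$ the elementary inequality $(k-1)(2k-1) \ge k^2$ gives $\sum_{i=1}^{k-1} i^2 \ge k^3/6$ and hence $\min_i \Lest\norm*{v_i}/i \le \Lest\sqrt{6 S_{k-1}/k^3}$. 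Combining with the previous uniform estimate and noting $\sqrt 6 + 3/16 \approx 2.637 < 2\sqrt 2$,
\begin{align*}
  \min_{1 \le i < k}\norm*{\nabla f(\bar x_i)} \le \Lest\sqrt{\frac{S_{k-1}}{k^3}}\prn*{\sqrt 6 + \tfrac{3}{16}} \le \Lest\sqrt{\frac{8 S_{k-1}}{k^3}}.
\end{align*}

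The boundary case $k = 2$ requires a separate argument, which is also where the main obstacle lies: the slack term $\Hest_1\sqrt{S_1/8}$ in \cref{eq:grad_norm_xbark_upperbound_Hestk} cannot be brought below the target by the restart bound alone, and the Cauchy--Schwarz step collapses to the trivial estimate $\|v_1\|^2 \le S_1$. Here I would invoke the exact identity instead: since $v_0 = \0$, the update rule forces $\nabla f(x_0) = -\Lest v_1$, and $\bar x_1 = x_0$ then gives $\norm*{\nabla f(\bar x_1)} = \Lest\norm*{v_1} = \Lest\sqrt{S_1}$, matching the target $\Lest\sqrt{8 S_1/2^3}$ with equality.
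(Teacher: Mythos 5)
Your proof is correct and reaches the same constant budget, but the decomposition differs from the paper's. The paper forms a single weighted sum $\sum_{i=1}^{k-1} i^2 \norm*{\nabla f(\bar x_i)}$, bounds it above via \cref{eq:grad_norm_xbark_upperbound_Hestk}, applies the Cauchy--Schwarz inequality separately to $\sum i\norm*{v_i}$ and to $\sum i^2\sqrt{i/8}$, and only at the end divides by $A_k = \sum i^2 \geq k^3/6$ and invokes the restart bound $k(k-1)\Hest_{k-1}\leq\frac{3}{8}\Lest$. You instead split the minimum first, using $\min_i(a_i+b_i)\leq\min_i a_i+\max_i b_i$: the $a$-part is handled by a weighted-minimum inequality (which recovers the same $\sqrt{S_{k-1}/A_k}$ factor as the paper's Cauchy--Schwarz on $\sum i\norm*{v_i}$), while the $b$-part is controlled by a pointwise estimate using $\Hest_i \leq \Hest_{k-1}$, $iS_i\leq(k-1)S_{k-1}$, and $k-1\geq k/2$. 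Your route avoids the second Cauchy--Schwarz application at the cost of a slightly looser $b$-part (constant $3/16$ versus roughly $3\sqrt{3}/32$), but both fit comfortably inside $\sqrt{8}$. The base case $k=2$ is handled identically to the paper via $\nabla f(\bar x_1)=\nabla f(x_0)=-\Lest v_1$, which as you note is tight. One small presentational remark: you invoke $i(i+1)\Hest_i\leq\frac{3}{8}\Lest$ for all $i<k$ before concluding $\Hest_{k-1}\leq\frac{3\Lest}{8k(k-1)}$; only the instance $i=k-1$ together with monotonicity of $(\Hest_j)$ is actually used, which is exactly condition \cref{eq:continue_condition_k-1}.
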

\begin{proof}
  For $k = 2$, the result follows from $\norm*{ \nabla f (\bar x_1) } = \norm*{ \nabla f (x_0) } = \Lest \norm*{v_1}$.
  Below, we assume that $k \geq 3$.
  Let $A_k \coloneqq \sum_{i=1}^{k-1} i^2$; we have
  \begin{align}
    A_k
    =
    \frac{k (k-1) (2k-1)}{6}
    \geq
    \frac{k^3}{6}
    \label{eq:Ak_lowerbound}
  \end{align}
  for $k \geq 3$.
  A weighted sum of \cref{eq:grad_norm_xbark_upperbound_Hestk} over $k$ yields
  \begin{align}
    A_k
    \min_{1 \leq i < k}
    \norm*{ \nabla f (\bar x_i) }
    &\leq
    \sum_{i=1}^{k-1}
    i^2 \norm*{ \nabla f (\bar x_i) }
    \leq
    \Lest
    \sum_{i=1}^{k-1} i \norm*{v_i}
    +
    \Hest_{k-1} \sqrt{S_{k-1}}
    \sum_{i=1}^{k-1} i^2 \sqrt{\frac{i}{8}}
  \end{align}
  since $h_k$ and $S_k$ are nondecreasing in $k$.
  Each term can be bounded by the Cauchy--Schwarz inequality as
  \begin{align}
    \sum_{i=1}^{k-1} i \norm*{v_i}
    &\leq
    \sqrt{A_k S_{k-1}},\quad
    \sum_{i=1}^{k-1} i^2 \sqrt{\frac{i}{8}}
    =
    \sum_{i=1}^{k-1} i \sqrt{\frac{i^3}{8}}
    \leq
    \sqrt{A_k}
    \prn*{ \sum_{i=1}^{k-1} \frac{i^3}{8} }^{1/2}
    =
    \sqrt{\frac{A_k}{32}} k (k-1),
  \end{align}
  and thus
  \begin{alignat}{2}
    \min_{1 \leq i < k}
    \norm*{ \nabla f (\bar x_i) }
    \leq
    \Lest \sqrt{\frac{S_{k-1}}{A_k}}
    + \sqrt{\frac{S_{k-1}}{32 A_k}} k (k-1) \Hest_{k-1}
    \leq
    \Lest \sqrt{\frac{S_{k-1}}{A_k}}
    \prn*{
      1 + \frac{3}{8 \sqrt{32}}
    },
  \end{alignat}
  where \revise{the last inequality uses} \cref{eq:continue_condition_k-1}.
  Using \cref{eq:Ak_lowerbound} and $1 + \frac{3}{8 \sqrt{32}} < \frac{2}{\sqrt{3}}$ concludes the proof.
\end{proof}

\subsection{Complexity bound}
Let $\Lmax$ denote the upper bound on the Lipschitz estimate $\Lest$ given in \cref{prop:Lest_upperbound}: $\Lmax \coloneqq \max \set{\Linit, \alpha \Ltrue}$.
The following theorem shows iteration complexity bounds for \cref{alg:proposed}.
Recall that $\alpha > 1$ and $0 < \beta \leq 1$ are the input parameters of \cref{alg:proposed}.
\begin{theorem}
  \label{thm:complexity}
  Suppose that \cref{asm:gradient_lip} holds \revise{and $\inf_{x \in \R^d} f(x) > - \infty$.}
  Let
  \begin{align}
    \Delta \coloneqq f(x_\mathrm{init}) - \inf_{x \in \R^d} f(x),\quad
    \revise{
    c_1 \coloneqq \log_\alpha \prn*{\frac{1}{\beta}},
    \quad\text{and}\quad
    c_2 \coloneqq 1 +  \log_\alpha \prn*{\frac{\Lmax}{\Linit}}.
    }
    \label{eq:def_c1_c2}
  \end{align}
  In \cref{alg:proposed}, when $\norm*{\nabla f(\bar x_k)} \leq \epsilon$ holds for the first time, the total iteration count $K$ is at most
  \begin{align}
    \inf_{\nu \in [0, 1]}
    \set*{
      91
      (1 + \sqrt{\revise{c_1}})
      \Delta
      \sqrt{\Lmax}
      \Htrue{\nu}^{\frac{1}{2 + 2 \nu}}
      \epsilon^{- \frac{4 + 3 \nu}{2 + 2 \nu}}
      +
      256 \revise{c_1}
      \Delta
      \Htrue{\nu}^{\frac{1}{1 + \nu}}
      \epsilon^{- \frac{2 + \nu}{1 + \nu}}
    }
    \revise{
      + 6 \sqrt{c_2 \Delta \Lmax} \epsilon^{-1}
      + c_2
    }.
  \end{align}
  In particular, if we set $\beta = 1$, then $\revise{c_1} = 0$ and the upper bound simplifies to
  \begin{align}
    \inf_{\nu \in [0, 1]}
    \set*{
      91
      \Delta
      \sqrt{\Lmax}
      \Htrue{\nu}^{\frac{1}{2 + 2 \nu}}
      \epsilon^{- \frac{4 + 3 \nu}{2 + 2 \nu}}
    }
    \revise{
      + 6 \sqrt{c_2 \Delta \Lmax} \epsilon^{-1}
      + c_2
    }.
    \label{eq:complexity_simple}
  \end{align}
\end{theorem}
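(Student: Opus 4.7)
The plan is to decompose the run of \cref{alg:proposed} into \emph{epochs}---the blocks of iterations between two consecutive restarts, plus the final block that first satisfies $\norm{\nabla f(\bar x_{k^*})}\le\epsilon$---and to bound the total iteration count $K$ by summing an upper bound on each epoch length. Let $N_s$, $N_u$ be the numbers of epochs ending at \cref{alg-line-hb:restart_successful} and \cref{alg-line-hb:restart_unsuccessful} respectively, so $N := N_s + N_u + 1$ is the total number of epochs. Each unsuccessful restart multiplies $\Lest$ by $\alpha$ and each successful one multiplies it by $\beta$, so \cref{prop:Lest_upperbound} yields $\Linit \alpha^{N_u} \beta^{N_s} \le \Lmax$; taking $\log_\alpha$ gives
\begin{align}
  N_u \le (c_2 - 1) + c_1 N_s,
  \qquad
  N \le c_2 + (1 + c_1) N_s.
\end{align}

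The crucial step is a uniform per-epoch decrease lower bound. Write $D_j := f(x_0^{(j)}) - f(\xbest_{k_j})$, so $\sum_j D_j \le \Delta$ by telescoping. I claim that, for every successful epoch $j$ and every $\nu\in[0,1]$ with $\Htrue{\nu} < \infty$,
\begin{align}
  D_j \gtrsim \epsilon^{(2+\nu)/(1+\nu)} \Htrue{\nu}^{-1/(1+\nu)}.
\end{align}
For $k_j \ge 2$ this combines three inputs: (i) the restart condition $k_j(k_j+1)\Hest_{k_j} > 3\Lest_j/8$ together with \cref{prop:Hest_upperbound} gives $k_j^{2+\nu/2} \Htrue{\nu} S_{k_j}^{\nu/2} \gtrsim \Lest_j$; (ii) the bound $D_j \ge \Lest_j S_{k_j}/(4k_j)$ from \cref{lem:decrease_epoch} yields $S_{k_j} \le 4 k_j D_j/\Lest_j$; and (iii) \cref{lem:min_grad_xbar_upperbound2} under the non-termination hypothesis $\min_{i<k_j}\norm{\nabla f(\bar x_i)}>\epsilon$ gives $k_j^2 \le 32 \Lest_j D_j/\epsilon^2$. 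Substituting the $S_{k_j}$ bound and then the $(2+\nu)/2$-th power of the $k_j^2$ bound into (i) causes both $k_j$ and $\Lest_j$ to cancel exactly, leaving $D_j^{1+\nu}\gtrsim\epsilon^{2+\nu}/\Htrue{\nu}$. For $k_j = 1$, \cref{prop:Hest_upperbound} at $k=1$ gives $\norm{v_1}^\nu > 3\Lest_j/(16\Htrue{\nu})$, while non-termination gives $\norm{v_1}>\epsilon/\Lest_j$; taking a weighted geometric mean of these two lower bounds on $\norm{v_1}$ with weight $(2+\nu)/(2(1+\nu))$ makes the $\Lest_j$ powers in $D_j = \Lest_j \norm{v_1}^2/4$ cancel and recovers the same bound. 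The $\nu = 0$ limit (where the multiplicative argument formally divides by $\nu$) is handled separately via the direct inequality $\Hest_k \le H_0$. Summing over successful epochs yields $N_s \lesssim \Delta \Htrue{\nu}^{1/(1+\nu)}\epsilon^{-(2+\nu)/(1+\nu)}$.

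Putting the pieces together, the same combination of \cref{lem:decrease_epoch,lem:min_grad_xbar_upperbound2} applied to \emph{every} epoch with $k_j \ge 2$ (successful, unsuccessful, or final) uniformly yields $k_j \le \sqrt{32\Lmax D_j}/\epsilon$; the same estimate holds for $k_j = 1$ successful epochs since $D_j \ge \epsilon^2/(4\Lest_j)$. The only exceptions are $k_j = 1$ unsuccessful epochs (no $D_j$ lower bound is available) and possibly a $k^* = 1$ terminating epoch, which together contribute at most $N_u + 1$ to $K$. Cauchy--Schwarz over the remaining epochs gives
\begin{align}
  K \;\le\; \frac{\sqrt{32\Lmax\Delta\,N}}{\epsilon} + N_u + 1
  \;\le\; \frac{\sqrt{32\Lmax\Delta\bigl(c_2 + (1+c_1)N_s\bigr)}}{\epsilon} + c_2 + c_1 N_s.
\end{align}
Splitting via $\sqrt{a+b}\le\sqrt{a}+\sqrt{b}$ and substituting the bound on $N_s$ produces the leading term $\sqrt{1+c_1}\,\sqrt{\Lmax}\,\Delta \Htrue{\nu}^{1/(2+2\nu)}\epsilon^{-(4+3\nu)/(2+2\nu)}$ (from $\sqrt{(1+c_1)N_s}$), the additive piece $c_1 N_s = O\bigl(c_1\Delta\Htrue{\nu}^{1/(1+\nu)}\epsilon^{-(2+\nu)/(1+\nu)}\bigr)$, and the residual $\sqrt{c_2\Lmax\Delta}/\epsilon$ and $c_2$ contributions. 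Taking the infimum over admissible $\nu$ completes the argument.

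The main obstacle is the uniform per-epoch lower bound on $D_j$: the three ingredients (restart condition via \cref{prop:Hest_upperbound}, decrease lemma, termination-derived bound) must combine so that both $k_j$ and $\Lest_j$ cancel \emph{exactly}, producing the target exponent $(2+\nu)/(1+\nu)$ on $\epsilon$ and $-1/(1+\nu)$ on $\Htrue{\nu}$. The $k_j = 1$ sub-case requires a geometric-mean interpolation whose weight is dictated by that same exponent, and the $\nu = 0$ case must be treated separately since the general multiplicative argument divides by $\nu$. Tracking the explicit numerical constants $91$, $256$, $6$ in the theorem statement is then mechanical.
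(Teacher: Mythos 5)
Your proposal takes essentially the same route as the paper: decompose into epochs; bound $N_u$ via $\Linit\alpha^{N_u}\beta^{N_s}\le\Lmax$; derive the per-successful-epoch decrease $D_j\gtrsim\epsilon^{(2+\nu)/(1+\nu)}H_\nu^{-1/(1+\nu)}$ by combining the restart condition with \cref{prop:Hest_upperbound}, \cref{lem:decrease_epoch}, and \cref{lem:min_grad_xbar_upperbound2} so that $k_j$ and $\Lest_j$ cancel; and finish by Cauchy--Schwarz plus $\sqrt{a+b}\le\sqrt a+\sqrt b$. The one organizational difference is that the paper bounds $K_{\mathrm{suc}}$ directly from a $k$-weighted decrease estimate and reserves Cauchy--Schwarz for the unsuccessful and last epochs only, whereas you apply $k_j\le\sqrt{32\Lmax D_j}/\epsilon$ plus Cauchy--Schwarz uniformly across all non-exceptional epochs; both routes reproduce the same leading coefficient $2^{13/2}$ and the same exponent $(4+3\nu)/(2+2\nu)$, so this is a stylistic rather than substantive divergence. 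One small remark: the ``divides by $\nu$'' concern in the $k_j=1$, $\nu\to 0$ case is avoided in the paper by always interpolating at the level of $S_k$ (the problematic factor $(\cdot)^{2/\nu}$ is raised to the power $\nu/(2+2\nu)$, giving a finite overall exponent), so a genuinely separate $\nu=0$ argument is unnecessary; your geometric-mean version at $k_j=1$ similarly degenerates gracefully to the single bound $\|v_1\|>\epsilon/\Lest$ together with the constraint $\Lest<16H_0/3$ implied by the restart condition.
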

\begin{proof}
  We classify the epochs into three types:
  \begin{itemize}
    \item 
    successful epoch: an epoch that does not find an $\epsilon$-stationary point and ends at \cref{alg-line-hb:restart_successful} with the descent condition~\cref{eq:armijo_rule} satisfied,
    \item
    unsuccessful epoch: an epoch that does not find an $\epsilon$-stationary point and ends at \cref{alg-line-hb:restart_unsuccessful} with the descent condition~\cref{eq:armijo_rule} unsatisfied,
    \item
    last epoch: the epoch that finds an $\epsilon$-stationary point.
  \end{itemize}
  Let $N_{\mathrm{suc}}$ and $N_{\mathrm{unsuc}}$ be the number of successful and unsuccessful epochs, respectively.
  Let $K_{\mathrm{suc}}$ be the total iteration number of all successful epochs.
  Below, we fix $\nu \in [0, 1]$ arbitrarily \revise{such that $\Htrue{\nu} < + \infty$}.
  \revise{(Note that there exists such a $\nu$ since $\Htrue{0} \leq 2 \Ltrue < + \infty$.)}

  \paragraph{Successful epochs.}
  Let us focus on a successful epoch \revise{and let $k$ denote the total number of iterations of the epoch we are focusing on, i.e., the epoch ends at iteration $k$.}
  \revise{
  We then have
  \begin{align}
    S_k
    \geq
    \frac{\epsilon^2 k^3}{8 \Lest^2}
    \label{eq:Sk_lowerbound2}
  \end{align}
  as follows: if $k = 1$, we have $S_k = \norm*{v_1}^2 = \frac{1}{\Lest^2} \norm*{\nabla f(x_0)}^2 > \frac{\epsilon^2}{\Lest^2} \geq \frac{\epsilon^2 k^3}{8 \Lest^2}$; if $k \geq 2$, \cref{lem:min_grad_xbar_upperbound2} gives $\epsilon < \Lest \sqrt{8 S_{k-1} / k^3} \leq \Lest \sqrt{8 S_k / k^3}$.%
  }
  On the other hand, putting the restart condition~\cref{eq:condition_restart} together with \cref{prop:Hest_upperbound} yields
  \begin{align}
    \frac{1}{4} \Lest
    < \frac{3}{8} \Lest
    < k (k+1) \Hest_k
    \leq 2 k^2 \Hest_k
    \leq 2 k^2 \Htrue{\nu} (k S_k)^{\frac{\nu}{2}}
  \end{align}
  and hence
  \begin{align}
    S_k
    \geq
    \frac{1}{k} \prn*{ \frac{\Lest}{8 k^2 \Htrue{\nu}} }^{2 / \nu}.
    \label{eq:Sk_lowerbound}
  \end{align}
  Combining \cref{eq:Sk_lowerbound,eq:Sk_lowerbound2} leads to
  \begin{align}
    S_k
    &=
    S_k^{\frac{2 + \nu}{2 + 2 \nu}}
    S_k^{\frac{\nu}{2 + 2 \nu}}
    \geq
    \prn*{\frac{\epsilon^2 k^3}{8 \Lest^2}}^{\frac{2 + \nu}{2 + 2 \nu}}
    \prn[\bigg]{ \frac{1}{k} \prn*{ \frac{\Lest}{8 k^2 \Htrue{\nu}}}^{2 / \nu} }^{\frac{\nu}{2 + 2 \nu}}
    =
    2^{- \frac{12 + 3 \nu}{2 + 2 \nu}}
    \Htrue{\nu}^{- \frac{1}{1 + \nu}}
    \epsilon^{\frac{2 + \nu}{1 + \nu}}
    \frac{k}{\Lest},\\
    S_k
    &=
    S_k^{\frac{4 + 3 \nu}{4 + 4 \nu}}
    S_k^{\frac{\nu}{4 + 4 \nu}}
    \geq
    \prn*{\frac{\epsilon^2 k^3}{8 \Lest^2}}^{\frac{4 + 3 \nu}{4 + 4 \nu}}
    \prn[\bigg]{ \frac{1}{k} \prn*{ \frac{\Lest}{8 k^2 \Htrue{\nu}}}^{2 / \nu} }^{\frac{\nu}{4 + 4 \nu}}
    =
    2^{- \frac{18 + 9 \nu}{4 + 4 \nu}}
    \Htrue{\nu}^{- \frac{1}{2 + 2 \nu}}
    \epsilon^{\frac{4 + 3 \nu}{2 + 2 \nu}}
    \frac{k^2}{\Lest^{3/2}}.
  \end{align}
  Plugging them into the \cref{eq:decrease_epoch_successful} yields
  \begin{align}
    f(x_0) - f(\xbest_k)
    \geq
    \frac{\Lest S_k}{4 k}
    &\geq
    2^{- \frac{16 + 7 \nu}{2 + 2 \nu}}
    \Htrue{\nu}^{- \frac{1}{1 + \nu}}
    \epsilon^{\frac{2 + \nu}{1 + \nu}}
    \geq
    2^{-8}
    \Htrue{\nu}^{- \frac{1}{1 + \nu}}
    \epsilon^{\frac{2 + \nu}{1 + \nu}},\\
    f(x_0) - f(\xbest_k)
    \geq
    \frac{\Lest S_k}{4 k}
    &\geq
    2^{- \frac{26 + 17 \nu}{4 + 4 \nu}}
    \Htrue{\nu}^{- \frac{1}{2 + 2 \nu}}
    \epsilon^{\frac{4 + 3 \nu}{2 + 2 \nu}}
    \frac{k}{\sqrt{\Lest}}
    \geq
    2^{-\frac{13}{2}}
    \Htrue{\nu}^{- \frac{1}{2 + 2 \nu}}
    \epsilon^{\frac{4 + 3 \nu}{2 + 2 \nu}}
    \frac{k}{\sqrt{\Lmax}}
  \end{align}
  since $\nu \geq 0$.
  Summing these bounds over all successful epochs results in
  \begin{align}
    \Delta
    \geq
    2^{-8}
    \Htrue{\nu}^{- \frac{1}{1 + \nu}}
    \epsilon^{\frac{2 + \nu}{1 + \nu}}
    N_{\mathrm{suc}},\quad
    \Delta
    \geq
    2^{-\frac{13}{2}}
    \Htrue{\nu}^{- \frac{1}{2 + 2 \nu}}
    \epsilon^{\frac{4 + 3 \nu}{2 + 2 \nu}}
    \frac{K_{\mathrm{suc}}}{\sqrt{\Lmax}},
  \end{align}
  and hence
  \begin{align}
    N_{\mathrm{suc}}
    \leq
    2^8
    \Delta
    \Htrue{\nu}^{\frac{1}{1 + \nu}}
    \epsilon^{- \frac{2 + \nu}{1 + \nu}},\quad
    K_{\mathrm{suc}}
    \leq
    2^{\frac{13}{2}}
    \Delta
    \sqrt{\Lmax}
    \Htrue{\nu}^{\frac{1}{2 + 2 \nu}}
    \epsilon^{- \frac{4 + 3 \nu}{2 + 2 \nu}}.
    \label{eq:Nsuc_Ksuc_upperbound}
  \end{align}

  \paragraph{Other epochs.}
  Let $k_1,\dots,k_{N_{\mathrm{unsuc}}}$ and $k_{N_{\mathrm{unsuc}} + 1}$ be the iteration number of unsuccessful and last epochs, respectively.
  Then, the total iteration number of the epochs can be bounded with the Cauchy--Schwarz inequality as follows:
  \begin{align}
    \sum_{i=1}^{N_{\mathrm{unsuc}} + 1} k_i
    &=
    \reviset{
    \sum_{i:\, k_i = 1} k_i
    + \sum_{i:\, k_i \geq 2} k_i
    }\\
    &\leq
    N_{\mathrm{unsuc}} + 1 + \sum_{i:\, k_i \geq 2} k_i
    \leq
    N_{\mathrm{unsuc}} + 1
    + \sqrt{N_{\mathrm{unsuc}} + 1} \sqrt{ \sum_{i:\, k_i \geq 2} k_i^2 },
    \label{eq:Kunsuclast_upperbound}
  \end{align}
  \revise{where $\sum_{i:\, k_i \geq 2}$ denotes a sum over $i = 1, \dots, N_{\mathrm{unsuc}} + 1$ such that $k_i \geq 2$.}
  We will evaluate $N_{\mathrm{unsuc}}$ and the sum of $k_i^2$.
  First, we have $\Linit \beta^{N_{\mathrm{suc}}} \alpha^{N_{\mathrm{unsuc}}} \leq \Lmax$ and hence
  \begin{align}
    N_{\mathrm{unsuc}}
    \leq
    \revise{
    c_1 N_{\mathrm{suc}}
    + c_2 - 1
    \leq
    2^8 c_1
    \Delta
    \Htrue{\nu}^{\frac{1}{1 + \nu}}
    \epsilon^{- \frac{2 + \nu}{1 + \nu}}
    + c_2 - 1
    }
    \label{eq:Nunsuc_upperbound}
  \end{align}
  from \cref{eq:Nsuc_Ksuc_upperbound}, \revise{where $c_1$ and $c_2$ are defined by \cref{eq:def_c1_c2}.} 
  Next, let us focus on an epoch that ends at iteration $k \geq 2$.
  \cref{lem:min_grad_xbar_upperbound2} gives $\epsilon < \Lest \sqrt{8 S_{k-1} / k^3}$ and hence $S_{k-1} \geq \frac{\epsilon^2 k^3}{8 \Lest^2}$.
  Plugging this bound into \cref{eq:decrease_epoch_unsuccessful} yields
  \begin{align}
    f(x_0) - f(\xbest_k)
    \geq
    \frac{\Lest S_{k-1}}{4k}
    \geq
    \frac{\epsilon^2 k^2}{2^5 \Lest}.
  \end{align}
  Summing this bound over all unsuccessful and last epochs results in
  \begin{align}
    \sum_{i:\, k_i \geq 2} k_i^2
    \leq 
    \frac{2^5 \Delta \Lmax}{\epsilon^2}.
    \label{eq:ki_squaredsum_upperbound}
  \end{align}
  Plugging \cref{eq:Nunsuc_upperbound,eq:ki_squaredsum_upperbound} into \cref{eq:Kunsuclast_upperbound} yields
  \begin{align}
    \sum_{i=1}^{N_{\mathrm{unsuc}} + 1} k_i
    &\leq
    2^8 \revise{c_1}
    \Delta
    \Htrue{\nu}^{\frac{1}{1 + \nu}}
    \epsilon^{- \frac{2 + \nu}{1 + \nu}}
    + \revise{c_2}
    +
    \sqrt{
      2^8 \revise{c_1}
      \Delta
      \Htrue{\nu}^{\frac{1}{1 + \nu}}
      \epsilon^{- \frac{2 + \nu}{1 + \nu}}
      + \revise{c_2}
    }
    \sqrt{
      \frac{2^5 \Delta \Lmax}{\epsilon^2}
    }\\
    &\leq
    2^8 \revise{c_1}
    \Delta
    \Htrue{\nu}^{\frac{1}{1 + \nu}}
    \epsilon^{- \frac{2 + \nu}{1 + \nu}}
    + \revise{c_2}
    +
    2^{\frac{13}{2}}
    \sqrt{\revise{c_1}}
    \Delta
    \sqrt{\Lmax}
    \Htrue{\nu}^{\frac{1}{2 + 2 \nu}}
    \epsilon^{- \frac{4 + 3 \nu}{2 + 2 \nu}}
    \revise{+ 2^{\frac{5}{2}} \sqrt{c_2 \Delta \Lmax} \epsilon^{-1}},
  \end{align}
  \revise{where the last inequality uses $\sqrt{a + b} \leq \sqrt{a} + \sqrt{b}$ for $a, b \geq 0$.}
  Putting this bound together with \cref{eq:Nsuc_Ksuc_upperbound} gives an upper bound on the total iteration number of all epochs:
  \begin{align}
    K_{\mathrm{suc}}
    + \sum_{i=1}^{N_{\mathrm{unsuc}} + 1} k_i
    \leq
    91
    (1 + \sqrt{\revise{c_1}})
    \Delta
    \sqrt{\Lmax}
    \Htrue{\nu}^{\frac{1}{2 + 2 \nu}}
    \epsilon^{- \frac{4 + 3 \nu}{2 + 2 \nu}}
    +
    256 \revise{c_1}
    \Delta
    \Htrue{\nu}^{\frac{1}{1 + \nu}}
    \epsilon^{- \frac{2 + \nu}{1 + \nu}}
    \revise{
      + 6 \sqrt{c_2 \Delta \Lmax} \epsilon^{-1}
      + c_2
    },
  \end{align}
  where we have used $2^{\frac{13}{2}} < 91$, $2^8 = 256$, \revise{and $2^{\frac{5}{2}} < 6$}.
  Since $\nu \in [0, 1]$ is now arbitrary, taking the infimum completes the proof.
\end{proof}

\cref{alg:proposed} evaluates the objective function and its gradient at two points, $x_k$ and $\bar x_k$, in each iteration.
Therefore, the number of evaluations is of the same order as the iteration complexity in \cref{thm:complexity}.

The complexity bounds given in \cref{thm:complexity} may look somewhat unfamiliar since they involve an $\inf$-operation on $\nu$.
Such a bound is a significant benefit of $\nu$-independent algorithms.
The $\nu$-dependent prototype algorithm described immediately after \cref{lem:two_inequalities_for_analysis_holder} achieves the bound
\begin{align}
  91
  (1 + \sqrt{\revise{c_1}})
  \Delta
  \sqrt{\Lmax}
  \Htrue{\nu}^{\frac{1}{2 + 2 \nu}}
  \epsilon^{- \frac{4 + 3 \nu}{2 + 2 \nu}}
  +
  256 \revise{c_1}
  \Delta
  \Htrue{\nu}^{\frac{1}{1 + \nu}}
  \epsilon^{- \frac{2 + \nu}{1 + \nu}}
  \revise{
    + 6 \sqrt{c_2 \Delta \Lmax} \epsilon^{-1}
    + c_2
  },
\end{align}
only for the given $\nu$.
In contrast, \cref{alg:proposed} is $\nu$-independent and automatically achieves the bound with the optimal $\nu$, as shown in \cref{thm:complexity}.
The fact that the optimal $\nu$ is difficult to find also points to the advantage of our $\nu$-independent algorithm.

The complexity bound~\cref{eq:complexity_simple} also gives a looser bound:
\begin{align}
  \inf_{\nu \in [0, 1]}
  \set*{
    91
    \Delta
    \sqrt{\Lmax}
    \Htrue{\nu}^{\frac{1}{2 + 2 \nu}}
    \epsilon^{- \frac{4 + 3 \nu}{2 + 2 \nu}}
  }
  + O(\epsilon^{-1})
  \leq
  91
  \Delta
  \sqrt{\Lmax \Htrue{0}}
  \epsilon^{-2}
  + O(\epsilon^{-1})
  \leq
  91 \sqrt{2}
  \Delta
  \Lmax
  \epsilon^{-2}
  + O(\epsilon^{-1}),
\end{align}
where we have taken $\nu = 0$ and have used $\Htrue{0} \leq 2 \Ltrue \leq 2 \Lmax$.
This bound matches the classical bound of $O(\epsilon^{-2})$ for GD.
\cref{thm:complexity} thus shows that our HB method has a more elaborate complexity bound than GD.

\revise{
\begin{remark}
  \label{rem:local_lip_holder}
  Although we employed global Lipschitz and H\"older continuity in \cref{asm:gradient_lip,def:holder_constant}, they can be restricted to the region where the iterates reach.
  More precisely, if we assume that the iterates $(x_k)$ generated by \cref{alg:proposed} are contained in some convex set $C \subseteq \R^d$, we can replace all $\R^d$ in our analysis with $C$; we can obtain the same complexity bound as \cref{thm:complexity} with Lipschitz and H\"older continuity on $C$.\footnote{
    \revise{
      We omit the proof, which is essentially the same, only replacing all $\R^d$ with $C$. The convexity of $C$ is necessary to guarantee that the averaged solution $\bar x_k$ also belongs to $C$.
    }
  }
\end{remark}
}

\section{Numerical experiments}
This section compares the performance of the proposed method with several existing algorithms.
The experimental setup, including the compared algorithms and problem instances, follows \citep{marumo2022parameter}.
We implemented \revise{the code} in Python with JAX~\citep{jax2018github} and Flax~\citep{flax2020github} and executed them on a computer with an Apple \revise{M3} Chip (\revise{12} cores) and \revise{36} GB RAM.
The source code used in the experiments is available on GitHub.%
\footnote{
  \url{https://github.com/n-marumo/restarted-hb}
}

\subsection{Compared algorithms}
We compared the following six algorithms.
\begin{itemize}
  \item 
  \Proposed is \cref{alg:proposed} with parameters set as $(\Linit, \alpha, \beta) = (10^{-3}, \allowbreak 2, 0.1)$.
  \item 
  \GD is a gradient descent method with Armijo-type backtracking.
  This method has input parameters $\Linit$, $\alpha$, and $\beta$ similar to those in \Proposed, which were set as $(\Linit, \alpha, \beta) = (10^{-3}, 2, 0.9)$.
  \item
  \JNJ \citep[Algorithm~2]{jin2018accelerated} is an accelerated gradient (AG) method for nonconvex optimization.
  The parameters were set in accordance with \citep[Eq.~(3)]{jin2018accelerated}.
  The equation involves constants $c$ and $\chi$, whose values are difficult to determine; we set them as $c = \chi = 1$.
  \item
  \LL \citep[Algorithm~2]{li2022restarted} is another AG method.
  The parameters were set in accordance with \citep[Theorem~2.2 and Section~4]{li2022restarted}.
  \item
  \MT \citep[Algorithm~1]{marumo2022parameter} is another AG method.
  The parameters were set in accordance with \citep[Section~6.1]{marumo2022parameter}.
  \revise{
  \item
  \LBFGS is the limited-memory BFGS method~\cite{byrd1995limited}.
  We used SciPy \cite{virtanen2020scipy} for the method, i.e., \texttt{scipy.optimize.minimize} with option \texttt{method="L-BFGS-B"}.
  }
\end{itemize}

The parameter setting for \JNJ and \LL requires the values of the Lipschitz constants $\Ltrue$ and $\Htrue{1}$ and the target accuracy $\epsilon$.
For these two methods, we tuned the best $\Ltrue$ among $\set{10^{-4},10^{-3},\dots,\reviset{10^{10}}}$ and set $\Htrue{1} = 1$ and $\epsilon = 10^{-16}$ following \citep{li2022restarted,marumo2022parameter}.
It should be noted that if these values deviate from the actual ones, the methods do not guarantee convergence.

\subsection{Problem instances}
We tested the algorithms on \revise{seven} different instances.
\revise{
The first four instances are benchmark functions from \citep{jamil2013literature}.
\begin{itemize}
  \item
  Dixon--Price function \citep{dixon1989truncated}:
  \begin{align}
    \min_{(x_1,\dots,x_d) \in \R^d}\ 
    (x_1 - 1)^2 + \sum_{i=2}^d i (2 x_i^2 - x_{i-1})^2.
    \label{eq:dixon_price}
  \end{align}
  The optimum is $f(x^*) = 0$ at $x^*_i = 2^{2^{1-i} - 1}$ for $1 \leq i \leq d$.
  \item
  Powell function \citep{powell1962iterative}:
  \begin{align}
    \min_{(x_1,\dots,x_d) \in \R^d}\ 
    \sum_{i=1}^{\floor{d/4}} \prn*{
      \prn*{x_{4i-3} + 10 x_{4i-2}}^2
      + 5 \prn*{x_{4i-1} - x_{4i}}^2
      + \prn*{x_{4i-2} - 2 x_{4i-1}}^4
      + 10 \prn*{x_{4i-3} - x_{4i}}^4
    }.
    \label{eq:powell}
  \end{align}
  The optimum is $f(x^*) = 0$ at $x^* = (0, \dots, 0)$.
  \item
  Qing Function \citep{qing2006dynamic}:
  \begin{align}
    \min_{(x_1,\dots,x_d) \in \R^d}\ 
    \sum_{i=1}^{d-1} (x_i^2 - i)^2.
    \label{eq:qing}
  \end{align}
  The optimum is $f(x^*) = 0$ at $x^* = (\pm \sqrt{1}, \pm \sqrt{2}, \dots, \pm \sqrt{d})$.
  \item
  Rosenbrock function \citep{rosenbrock1960automatic}:
  \begin{align}
    \min_{(x_1,\dots,x_d) \in \R^d}\ 
    \sum_{i=1}^{d-1} \prn*{
      100 \prn*{x_{i+1} - x_i^2}^2
      + (x_i - 1)^2
    }.
    \label{eq:rosenbrock}
  \end{align}
  The optimum is $f(x^*) = 0$ at $x^* = (1, \dots, 1)$.
\end{itemize}
The dimension $d$ of the above problems was fixed as $d = 10^6$.
The starting point was set as $\xinit = x^* + \delta$, where $x^*$ is the optimal solution, and each entry of $\delta$ was drawn from the normal distribution $\mathcal N(0, 1)$.
For the Qing function~\cref{eq:qing}, we used $x^* = (\sqrt{1}, \sqrt{2}, \dots, \sqrt{d})$ to set the starting point.
}

\revise{
  The other three instances are more practical examples from machine learning.
}
\begin{itemize}
  \item 
  Training a neural network for classification with the MNIST dataset:
  \begin{align}
    \min_{w \in \R^d}\ 
    &
    \frac{1}{N}
    \sum_{i=1}^N
    \ell_{\mathrm{CE}}(y_i, \phi_1(x_i; w)).
    \label{eq:exp_classification}
  \end{align}
  The vectors $x_1,\dots,x_N \in \R^M$ and $y_1,\dots,y_N \in \set{0, 1}^K$ are given data, $\ell_{\mathrm{CE}}$ is the cross-entropy loss, and $\phi_1(\cdot; w): \R^M \to \R^K$ is a neural network parameterized by $w \in \R^d$.
  \revise{
    We used a three-layer fully connected network with bias parameters.
    The layers each have $M$, $32$, $16$, and $K$ nodes, where $M = 784$ and $K = 10$.
    The hidden layers have the logistic sigmoid activation, and the output layer has the softmax activation.
    The total number of the parameters is $d = (784 \times 32 + 32 \times 16 + 16 \times 10) + (32 + 16 + 10) = 25818$.%
  }
  The data size is $N = 10000$.
  \item
  Training an autoencoder for the MNIST dataset:
  \begin{align}
    \min_{w \in \R^d}\ 
    &
    \frac{1}{2MN}
    \sum_{i=1}^N
    \norm*{x_i - \phi_2(x_i; w)}^2.
    \label{eq:exp_ae}
  \end{align}
  The vectors $x_1,\dots,x_N \in \R^M$ are given data, and $\phi_2(\cdot; w): \R^M \to \R^M$ is a neural network parameterized by $w \in \R^d$.
  \revise{
    We used a four-layer fully connected network with bias parameters.
    The layers each have $M$, $32$, $16$, $32$, and $M$ nodes, where $M = 784$.
    The hidden and output layers have the logistic sigmoid activation.
    The total number of the parameters is $d = (784 \times 32 + 32 \times 16 + 16 \times 32 + 32 \times 784) + (32 + 16 + 32 + 784) = 52064$.%
  }
  The data size is $N = 10000$.
  \item
  Low-rank matrix completion with the MovieLens-100K dataset:
  \begin{align}
    \min_{\substack{U \in \R^{p \times r}\\V \in \R^{q \times r}}}\ 
    &
    \frac{1}{2 N}
    \sum_{(i, j, s) \in \Omega}
    \prn*{(U V^\top)_{ij} - s}^2
    + \frac{1}{2 N} \norm*{U^\top U - V^\top V}_{\mathrm{F}}^2.
    \label{eq:exp_mf}
  \end{align}
  The set $\Omega$ consists of $N = 100000$ observed entries of a $p \times q$ data matrix, and $(i, j, s) \in \Omega$ means that the $(i, j)$-th entry is $s$.
  The second term with the Frobenius norm $\norm{\cdot}_{\mathrm{F}}$ was proposed in \citep{tu2016low} as a way to balance $U$ and $V$.
  The size of the data matrix is $p = 943$ times $q = 1682$, and we set the rank as $r \in \set{100, 200}$.
  Thus, the number of variables is $pr + qr \in \set{262500, 525000}$.
\end{itemize}


\revise{
  Although we did not check whether the above seven instances have globally Lipschitz continuous gradients or Hessians, we confirmed in our experiments that the iterates generated by each algorithm were bounded.
  Since all of the above instances are continuously thrice differentiable, both the gradients and Hessians are Lipschitz continuous in the bounded domain.
  Considering \cref{rem:local_lip_holder}, we can say that in the experiments, the proposed algorithm achieves the same complexity bound as \cref{thm:complexity}. 
}

\subsection{Results}
\revise{
  \Cref{fig:experiments_benchmark} illustrates the results with the four benchmark functions.\footnote{
    \revise{
      To obtain results of \LBFGS, we ran the SciPy functions multiple times with the maximum number of iterations set to $2^0, 2^1, 2^2,\dots$ because we cannot obtain the solution at each iteration while running SciPy codes of \LBFGS, but only the final result.
      The results are thus plotted as markers instead of lines in \cref{fig:experiments_benchmark,fig:experiments_benchmark_time,fig:experiments}.
    }
  }
  The horizontal axis is the number of calls to the oracle that computes both $f(x)$ and $\nabla f(x)$ at a given point $x \in \R^d$.

  Let us first focus on the methods other than \LBFGS, which is very practical but does not have complexity guarantees for general nonconvex functions, unlike the other methods.
  \Cref{fig:exp_dixonprice,fig:exp_powell} show that \Proposed converged faster than the existing methods except for \LBFGS, and \cref{fig:exp_qing} shows that \Proposed and \MT converged fast.
  \Cref{fig:exp_rosenbrock} shows that \GD and \LL attained a small objective function value, while \GD and \Proposed converged fast regarding gradient norm.
  In summery, the proposed algorithm was stable and fast.

  \LBFGS successfully solved the four benchmarks, but we should note that the results do not imply that \LBFGS converged faster than the proposed algorithm in terms of execution time.
  \Cref{fig:experiments_benchmark_time} provides the four figures in the right column of \cref{fig:experiments_benchmark}, with the horizontal axis replaced by the elapsed time.
  \Cref{fig:experiments_benchmark_time} shows that \Proposed converged comparably or faster in terms of time than \LBFGS.
  One reason for the large difference in the apparent performance of \LBFGS in \Cref{fig:experiments_benchmark,fig:experiments_benchmark_time} is that the computational costs of the non-oracle parts in \LBFGS, such as updating the Hessian approximation and solving linear systems, are not negligible.
  In contrast, the proposed algorithm does not require heavy computation besides oracle calls and is more advantageous in execution time when function and gradient evaluations are low-cost.

  \Cref{fig:experiments} presents the results with the machine learning instances.
  Similar to \cref{fig:experiments_benchmark}, \Cref{fig:experiments} shows that the proposed algorithm performed comparably or better than the existing methods except for \LBFGS, especially in reducing the gradient.

  \Cref{fig:experiments_objlh} illustrates the objective function value $f(x_k)$ and the estimates $\Lest$ and $\Hest_k$ at each iteration of the proposed algorithm for the machine learning instances.
  The iterations at which a restart occurred are also marked; ``successful'' and ``unsuccessful'' mean restarts at Line \ref{alg-line-hb:restart_successful} and Line \ref{alg-line-hb:restart_unsuccessful} of \cref{alg:proposed}, respectively.
  This figure shows that the proposed algorithm restarts frequently in the early stages but that the frequency decreases as the iterations progress.
  The frequent restarts in the early stages help update the estimate $\Lest$; $\Lest$ reached suitable values in the first few iterations, even though it was initialized to a pretty small value, $\Linit = 10^{-3}$.
  The infrequent restarts in later stages enable the algorithm to take full advantage of the HB momentum.


}




\begin{figure}
  \centering
  \includegraphics[height=3.5ex]{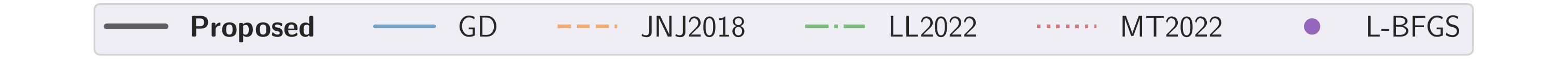}\par\medskip%
  \subfloat[Dixon--Price function \cref{eq:dixon_price}\label{fig:exp_dixonprice}]{%
    \includegraphics[width=0.43\linewidth]{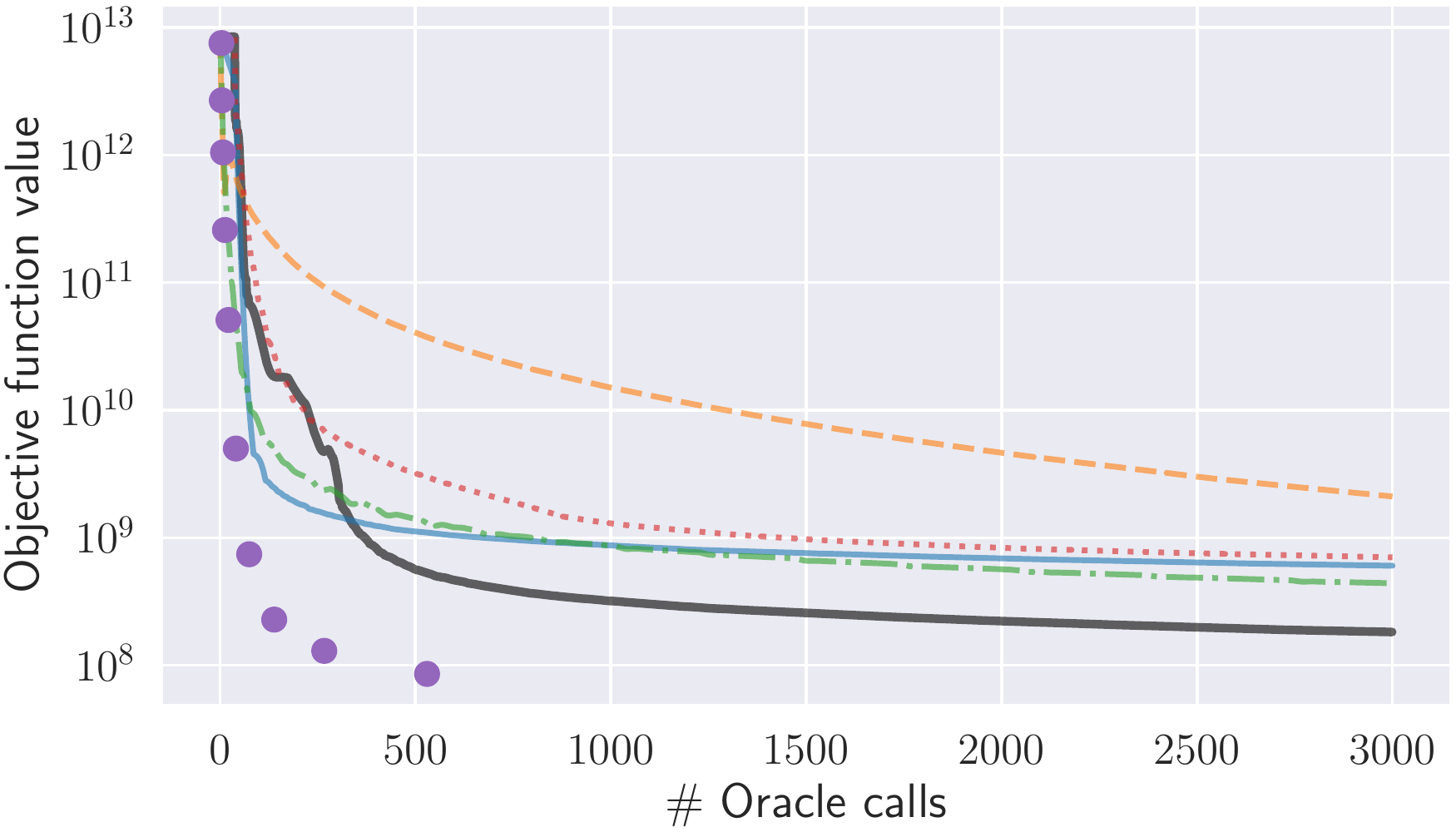}\hspace{0.1\linewidth}%
    \includegraphics[width=0.43\linewidth]{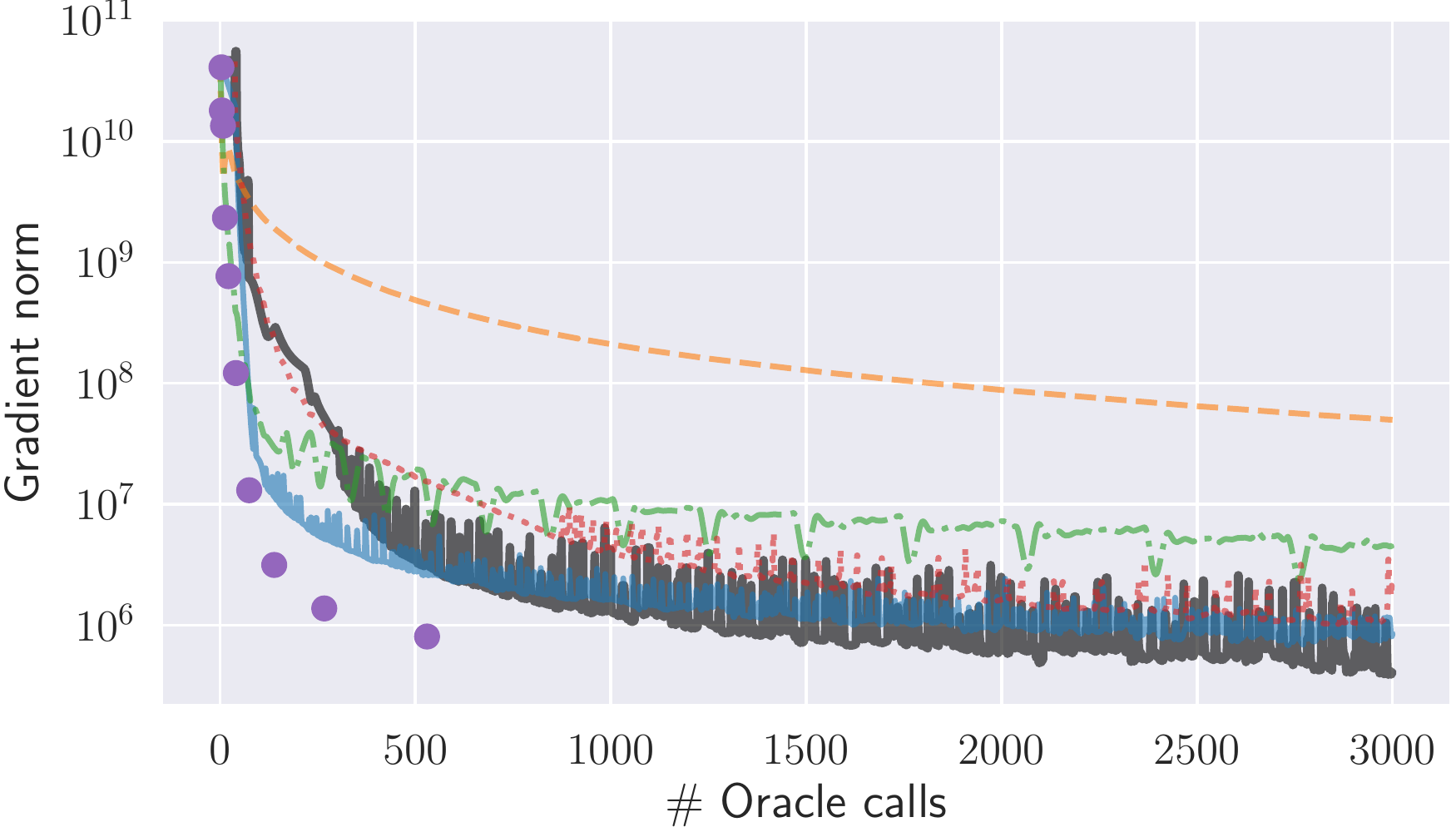}%
  }\par\medskip%
  \subfloat[Powell function \cref{eq:powell}\label{fig:exp_powell}]{%
    \includegraphics[width=0.43\linewidth]{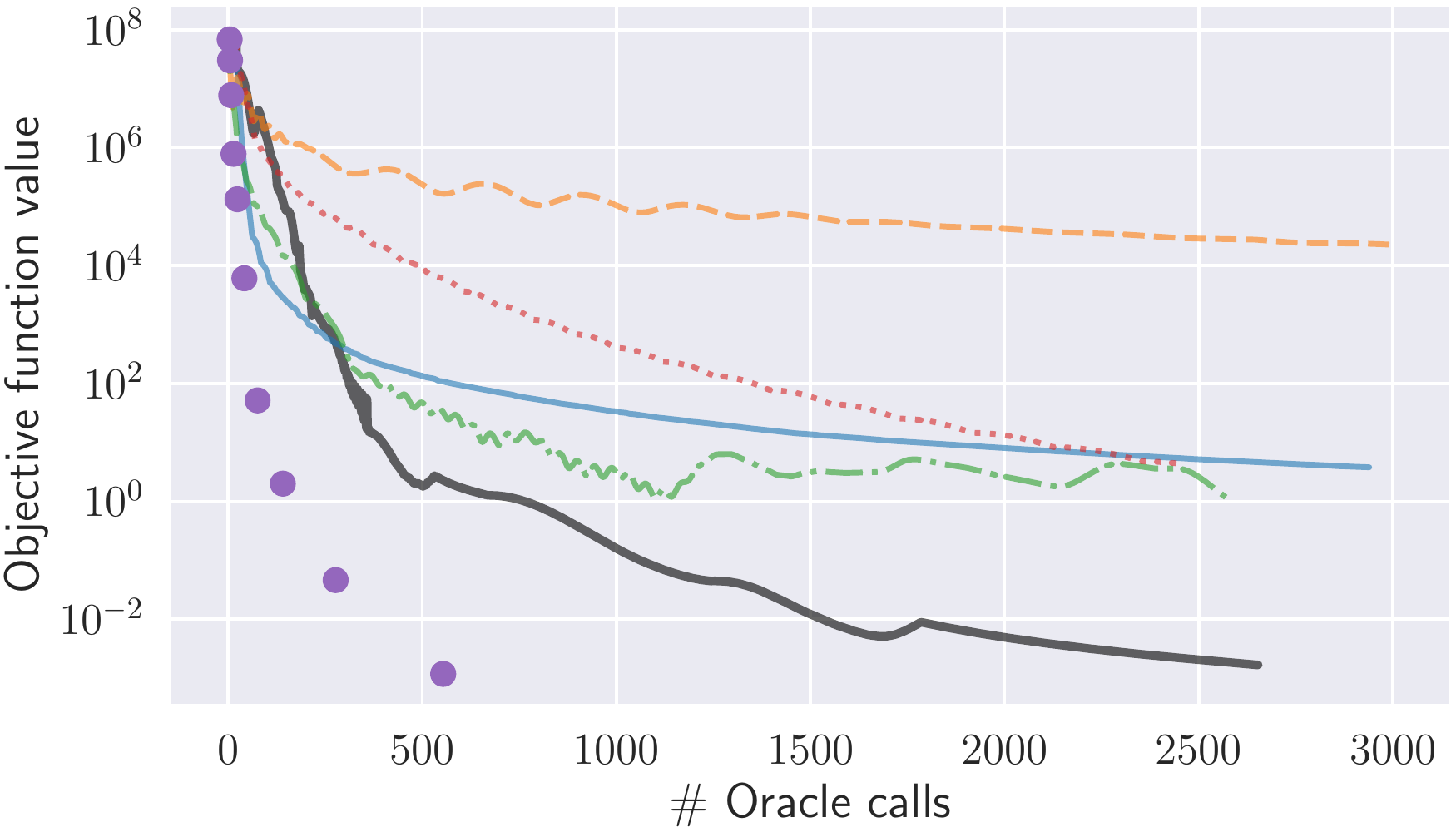}\hspace{0.1\linewidth}%
    \includegraphics[width=0.43\linewidth]{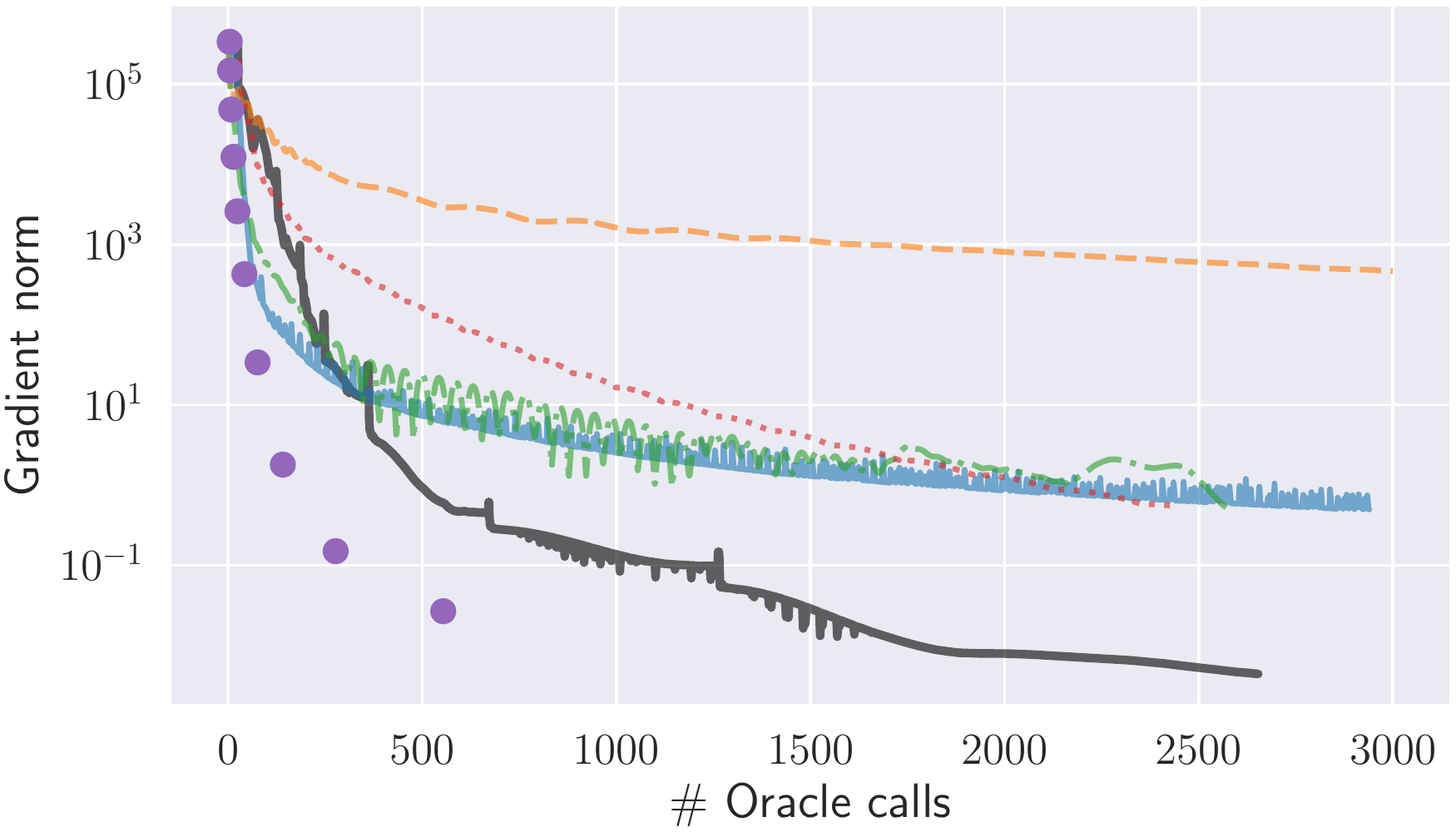}%
  }\par\medskip%
  \subfloat[Qing function \cref{eq:qing}\label{fig:exp_qing}]{%
    \includegraphics[width=0.43\linewidth]{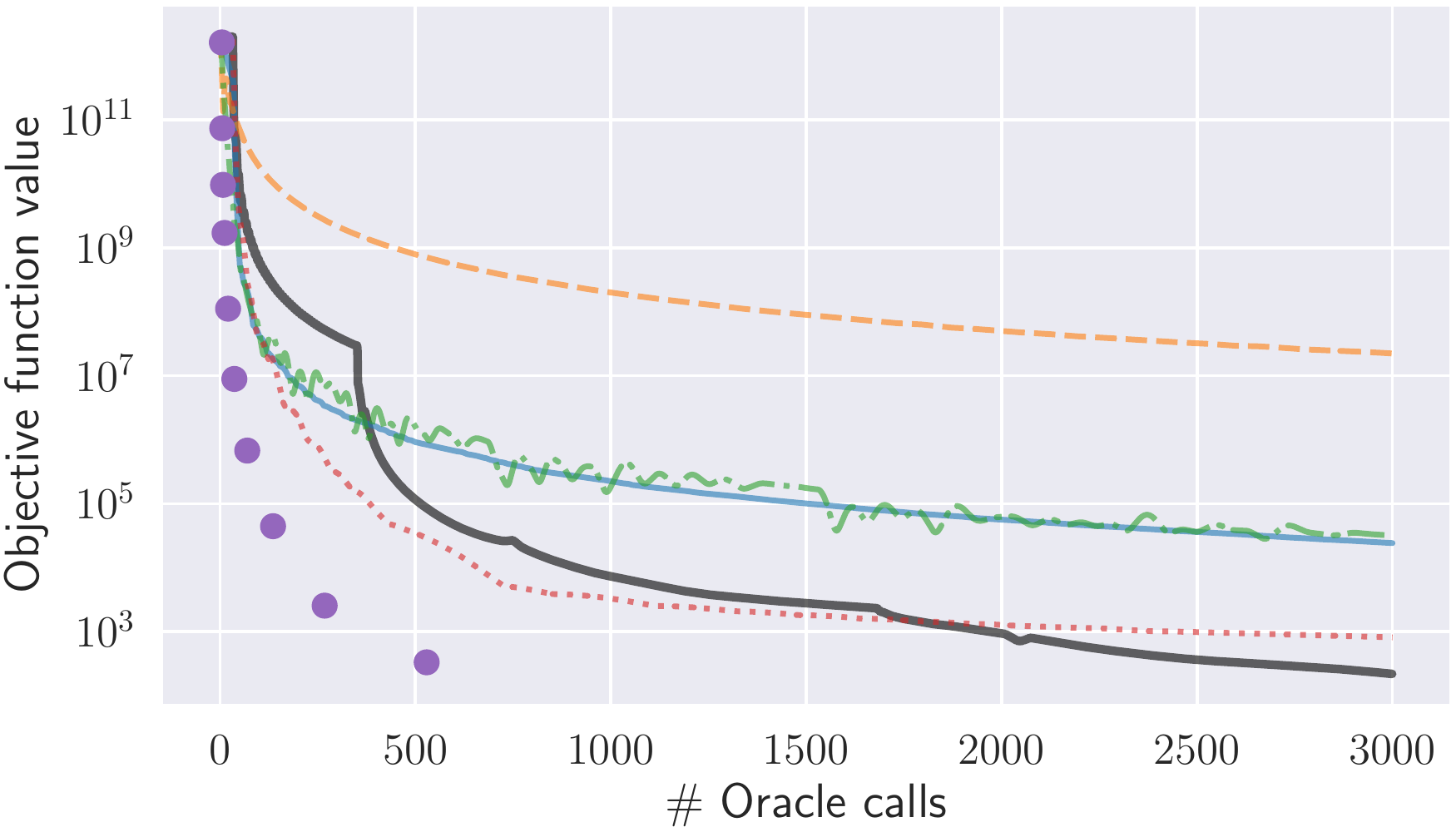}\hspace{0.1\linewidth}%
    \includegraphics[width=0.43\linewidth]{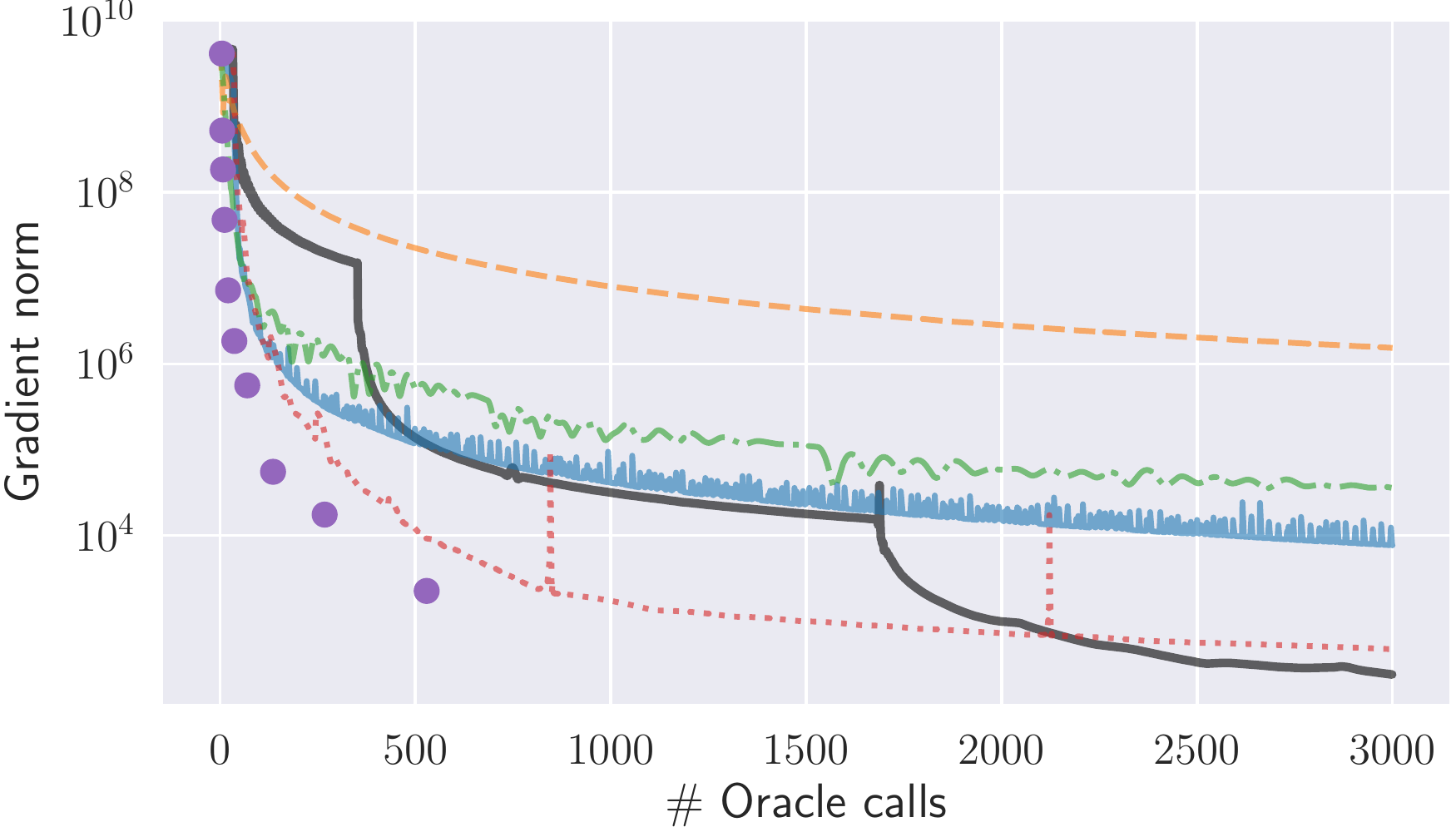}%
  }\par\medskip%
  \subfloat[Rosenbrock function \cref{eq:rosenbrock}\label{fig:exp_rosenbrock}]{%
    \includegraphics[width=0.43\linewidth]{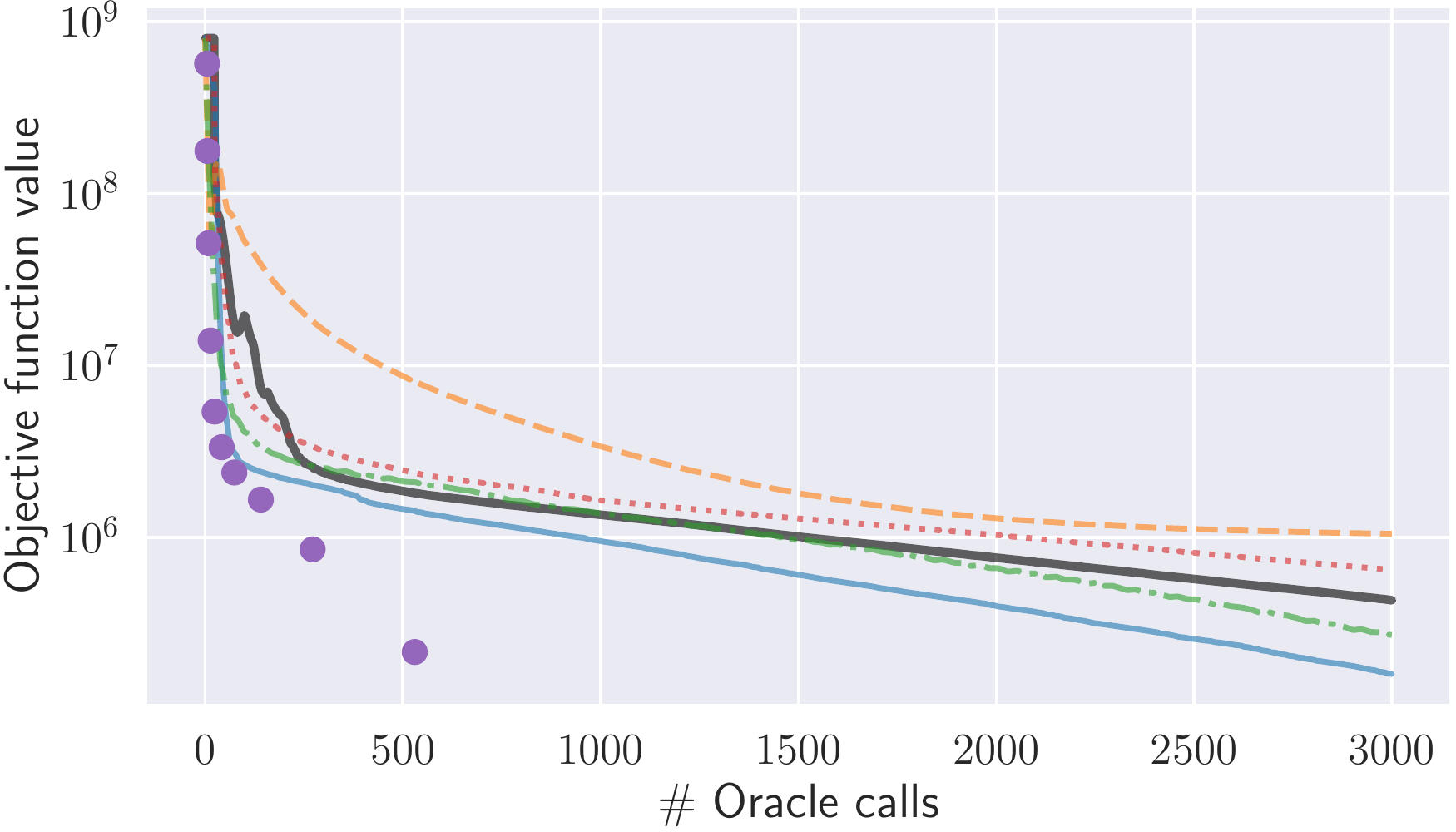}\hspace{0.1\linewidth}%
    \includegraphics[width=0.43\linewidth]{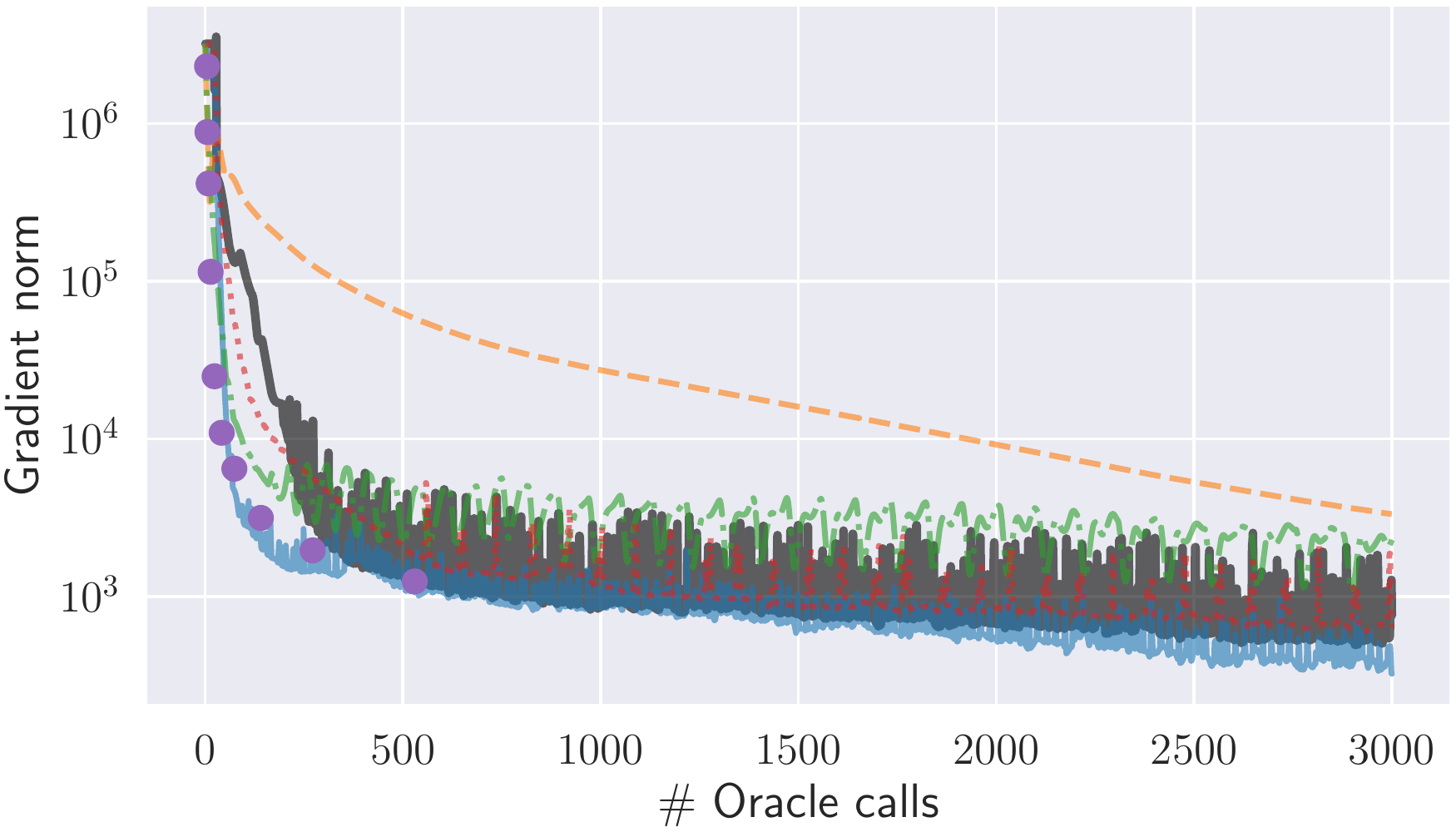}%
  }\par
  \caption{
    \revise{
      Numerical results with benchmark functions.\label{fig:experiments_benchmark}
    }
  }
\end{figure}

\begin{figure}[t]
  \centering
  \includegraphics[height=3.5ex]{fig/legend.pdf}\par\medskip%
  \subfloat[Dixon--Price function \cref{eq:dixon_price}\label{fig:exp_time_dixonprice}]{%
    \includegraphics[width=0.43\linewidth]{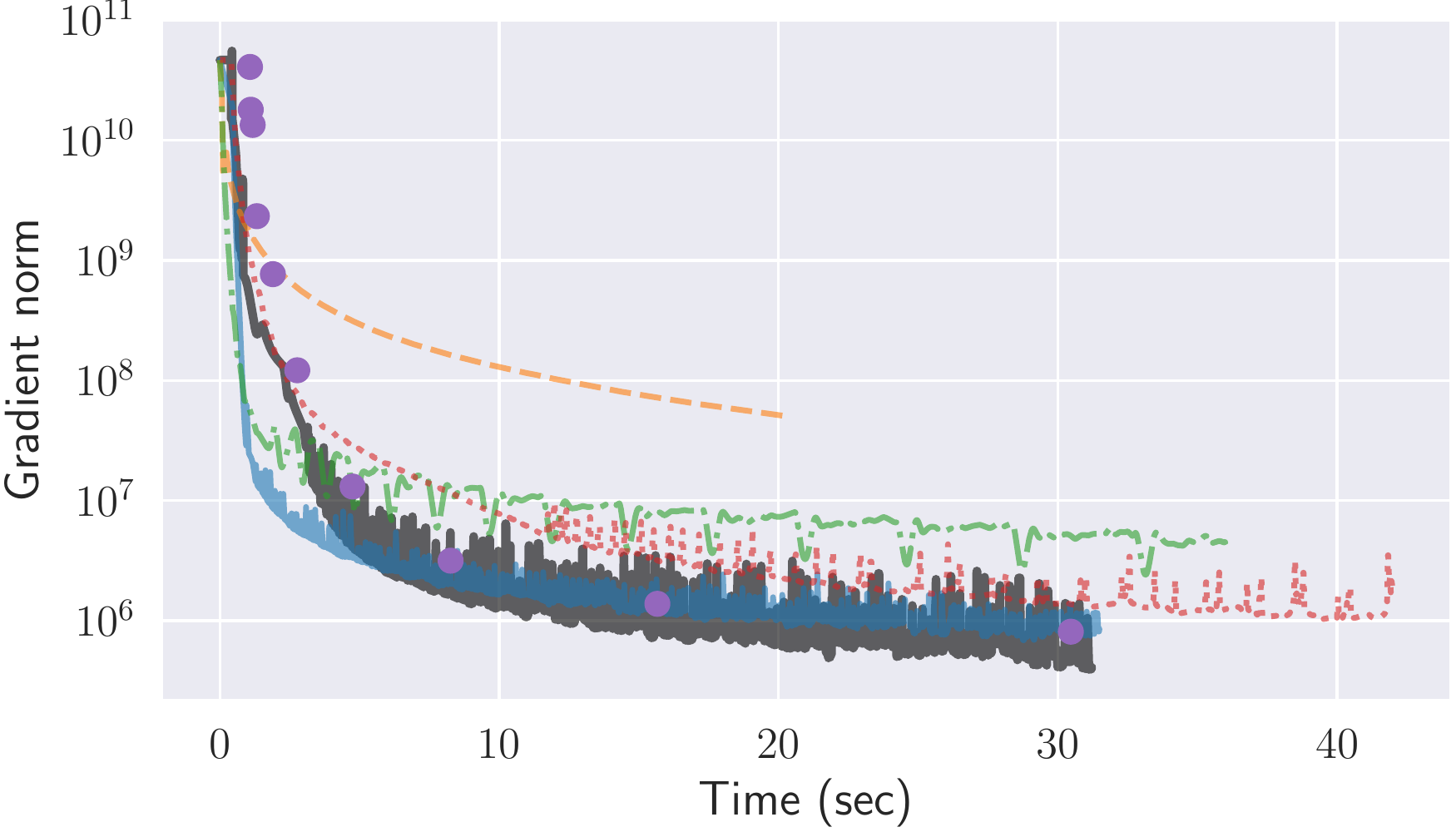}%
  }\hspace{0.1\linewidth}%
  \subfloat[Powell function \cref{eq:powell}\label{fig:exp_time_powell}]{%
    \includegraphics[width=0.43\linewidth]{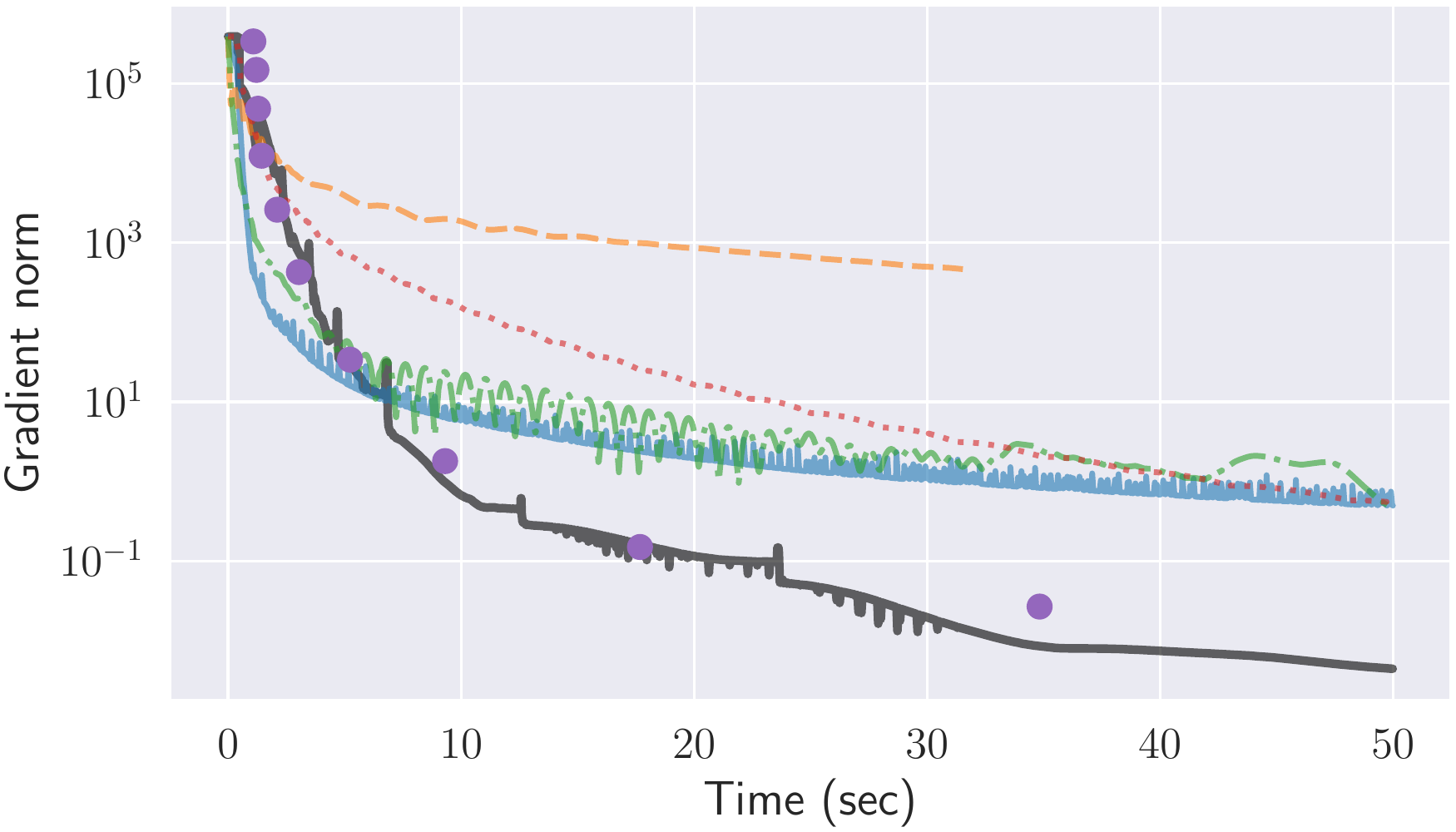}%
  }\par\medskip%
  \subfloat[Qing function \cref{eq:qing}\label{fig:exp_time_qing}]{%
    \includegraphics[width=0.43\linewidth]{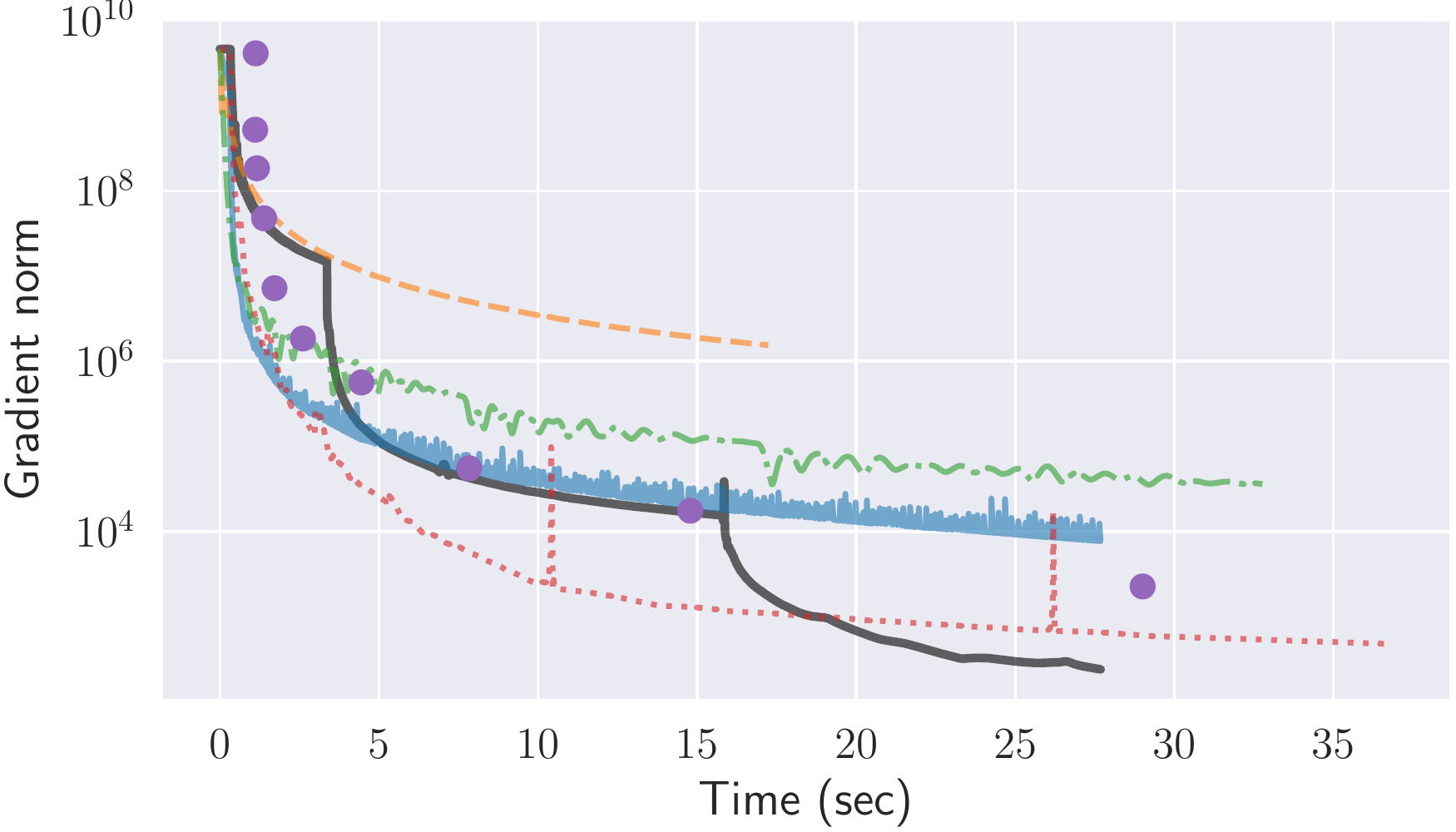}%
  }\hspace{0.1\linewidth}%
  \subfloat[Rosenbrock function \cref{eq:rosenbrock}\label{fig:exp_time_rosenbrock}]{%
    \includegraphics[width=0.43\linewidth]{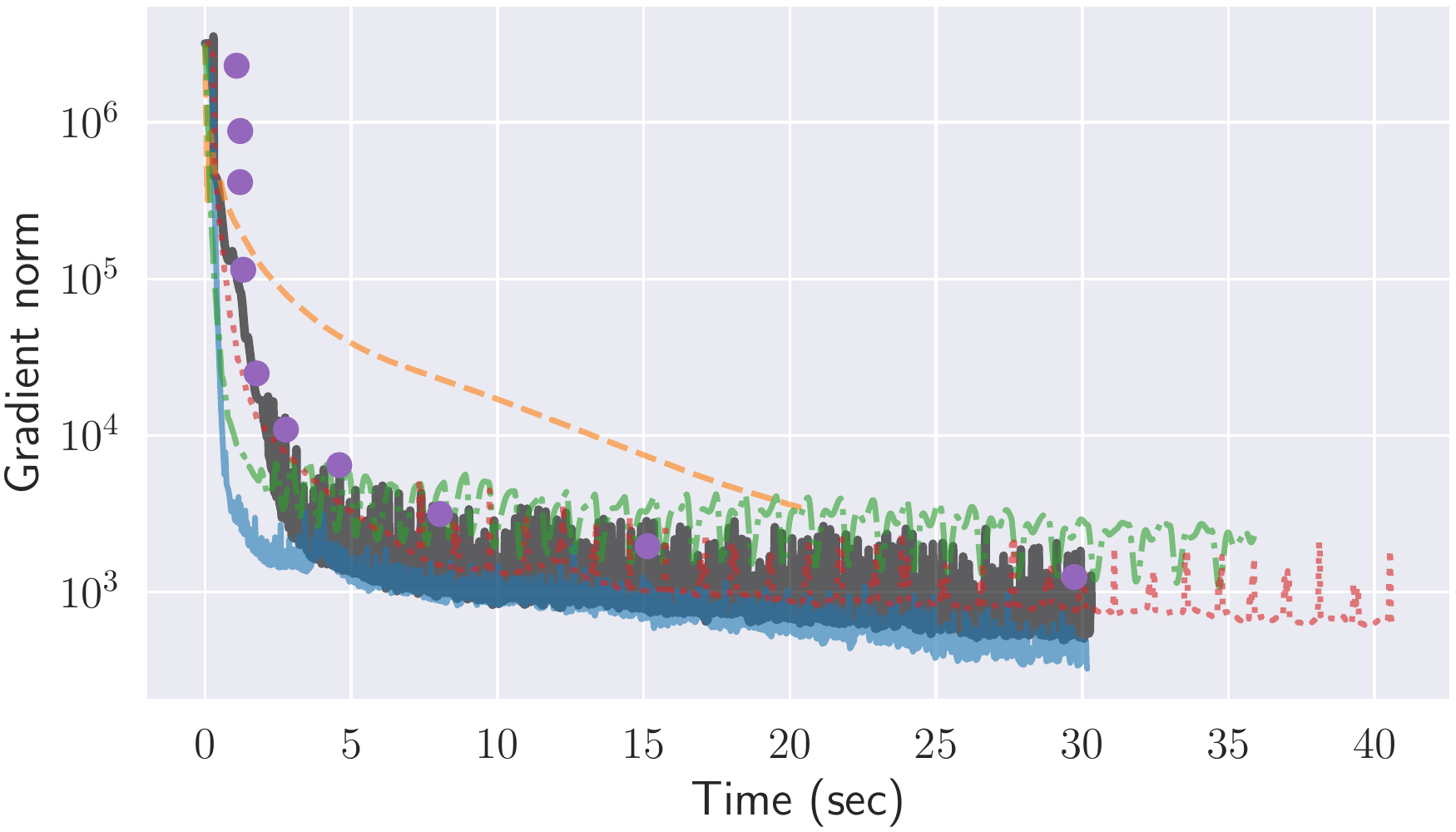}%
  }\par
  \caption{
    \revise{
      Numerical results with benchmark functions.
      The horizontal axis is the elapsed time in seconds.
      \label{fig:experiments_benchmark_time}
    }
  }
\end{figure}

\begin{figure}
  \centering
  \includegraphics[height=3.5ex]{fig/legend.pdf}\par\medskip%
  \subfloat[Classification \cref{eq:exp_classification}\label{fig:exp_classification}]{%
    \includegraphics[width=0.43\linewidth]{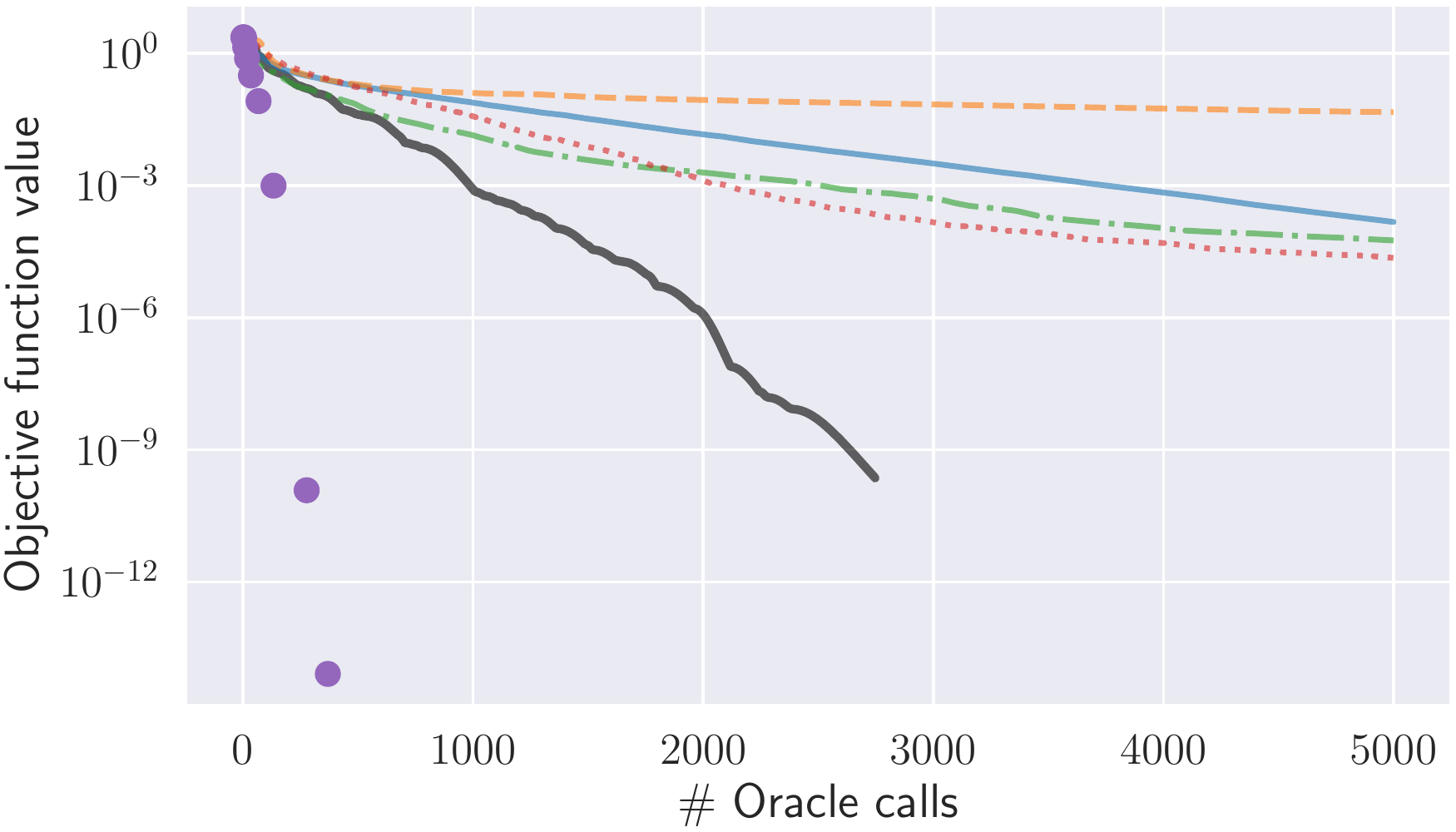}\hspace{0.1\linewidth}%
    \includegraphics[width=0.43\linewidth]{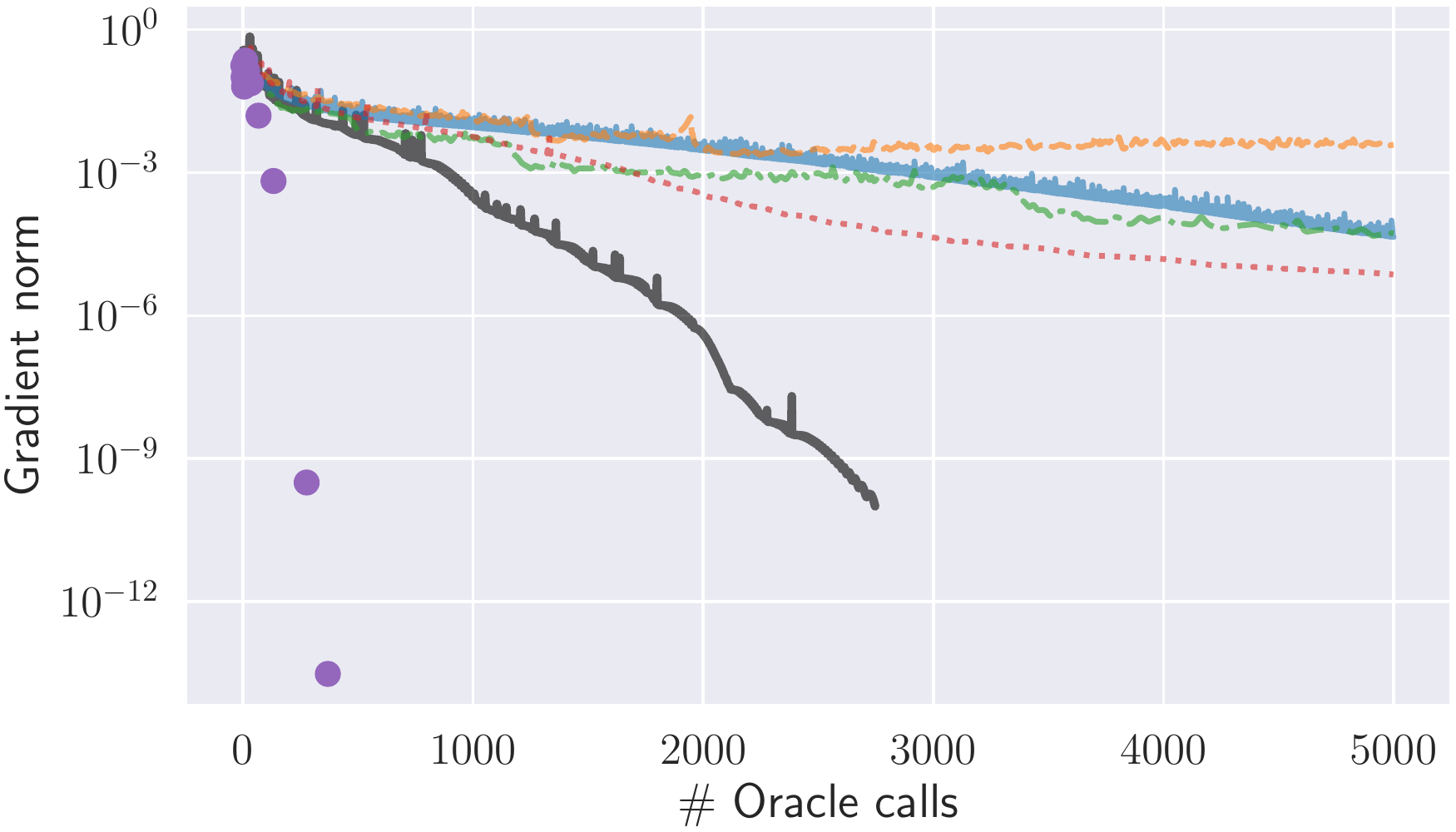}%
  }\par\medskip%
  \subfloat[Autoencoder \cref{eq:exp_ae}\label{fig:exp_autoencoder}]{%
    \includegraphics[width=0.43\linewidth]{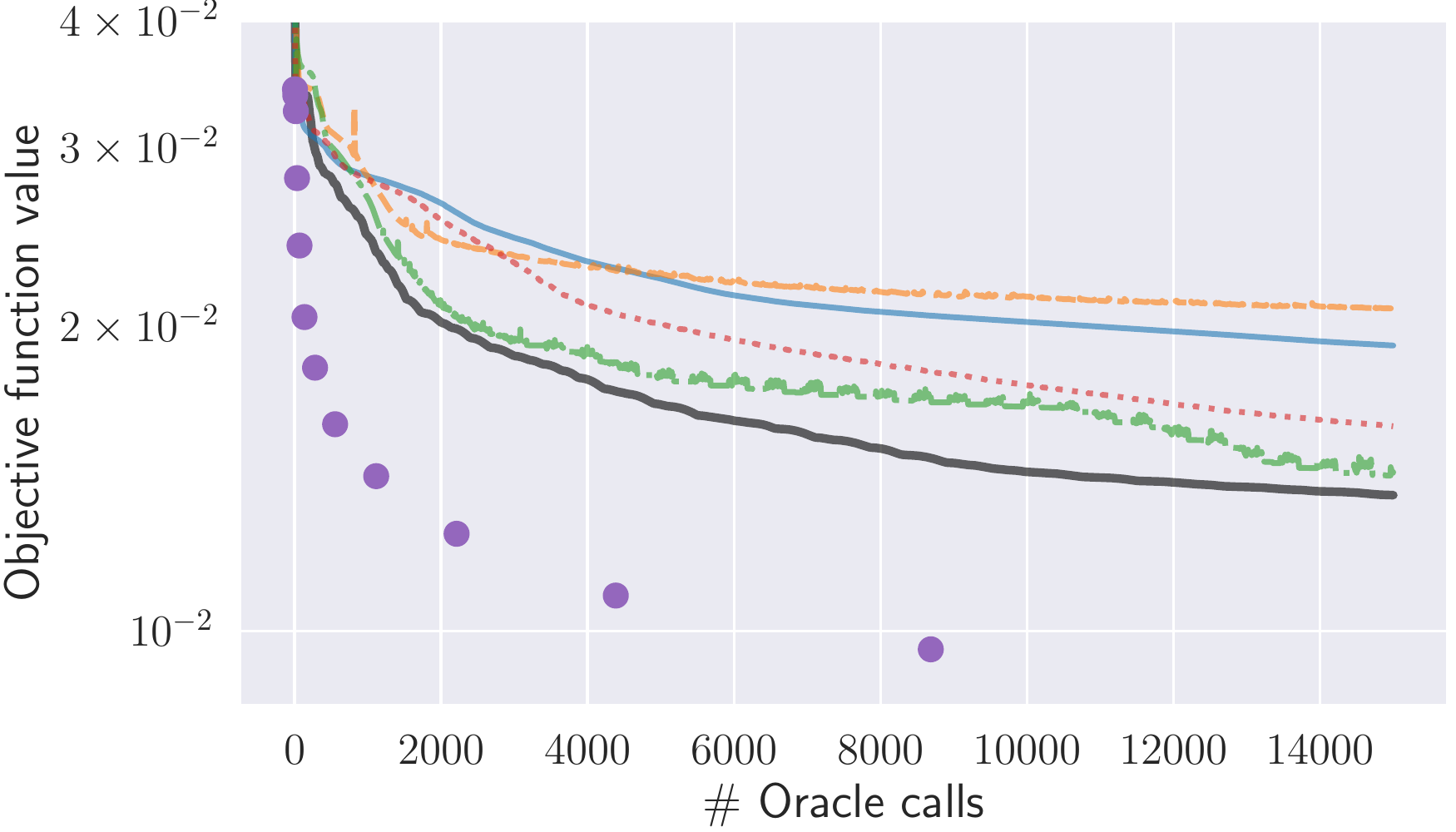}\hspace{0.1\linewidth}%
    \includegraphics[width=0.43\linewidth]{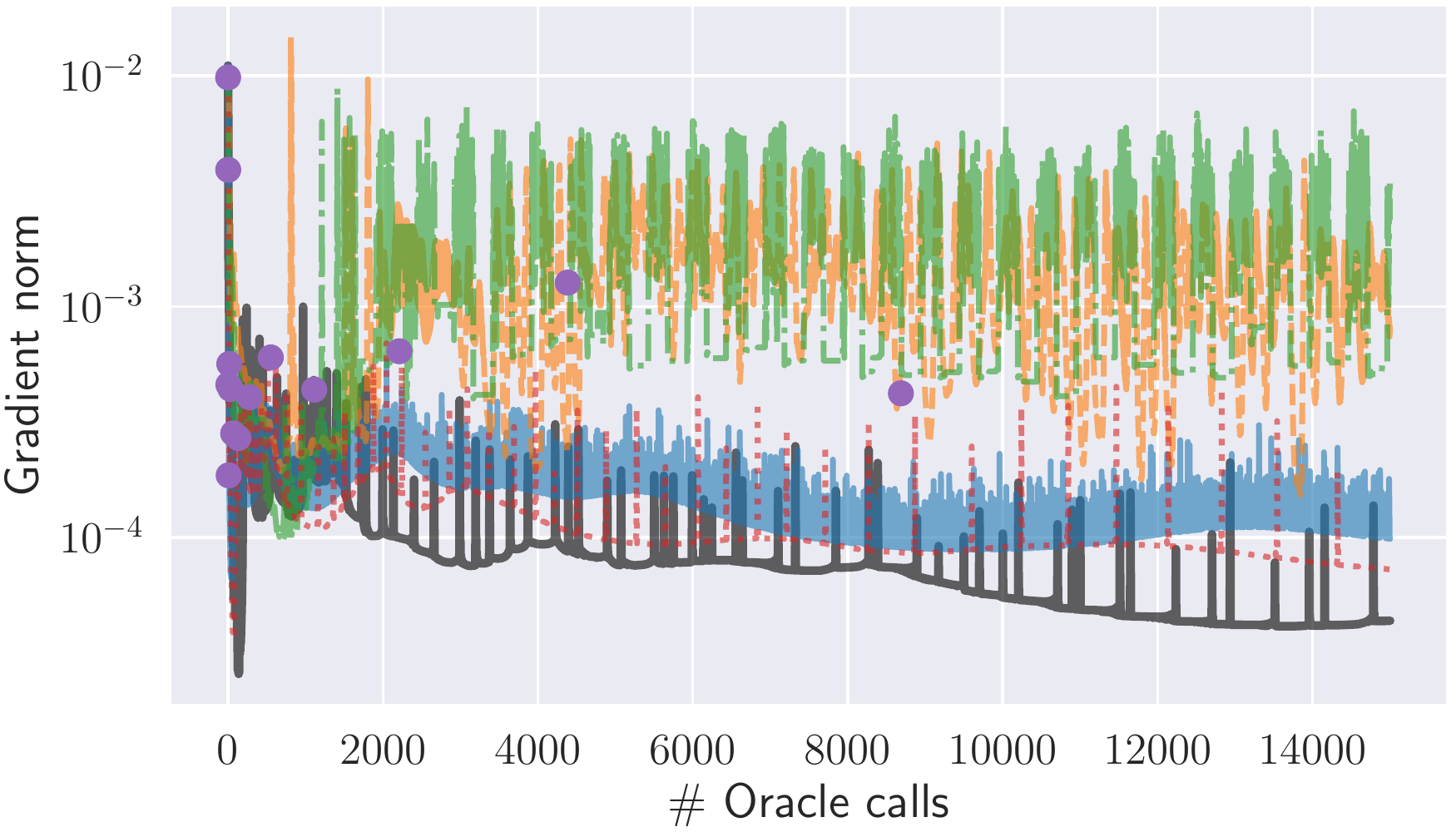}%
  }\par\medskip%
  \subfloat[Matrix completion \cref{eq:exp_mf} ($r = 100$)\label{fig:exp_completion_100}]{%
    \includegraphics[width=0.43\linewidth]{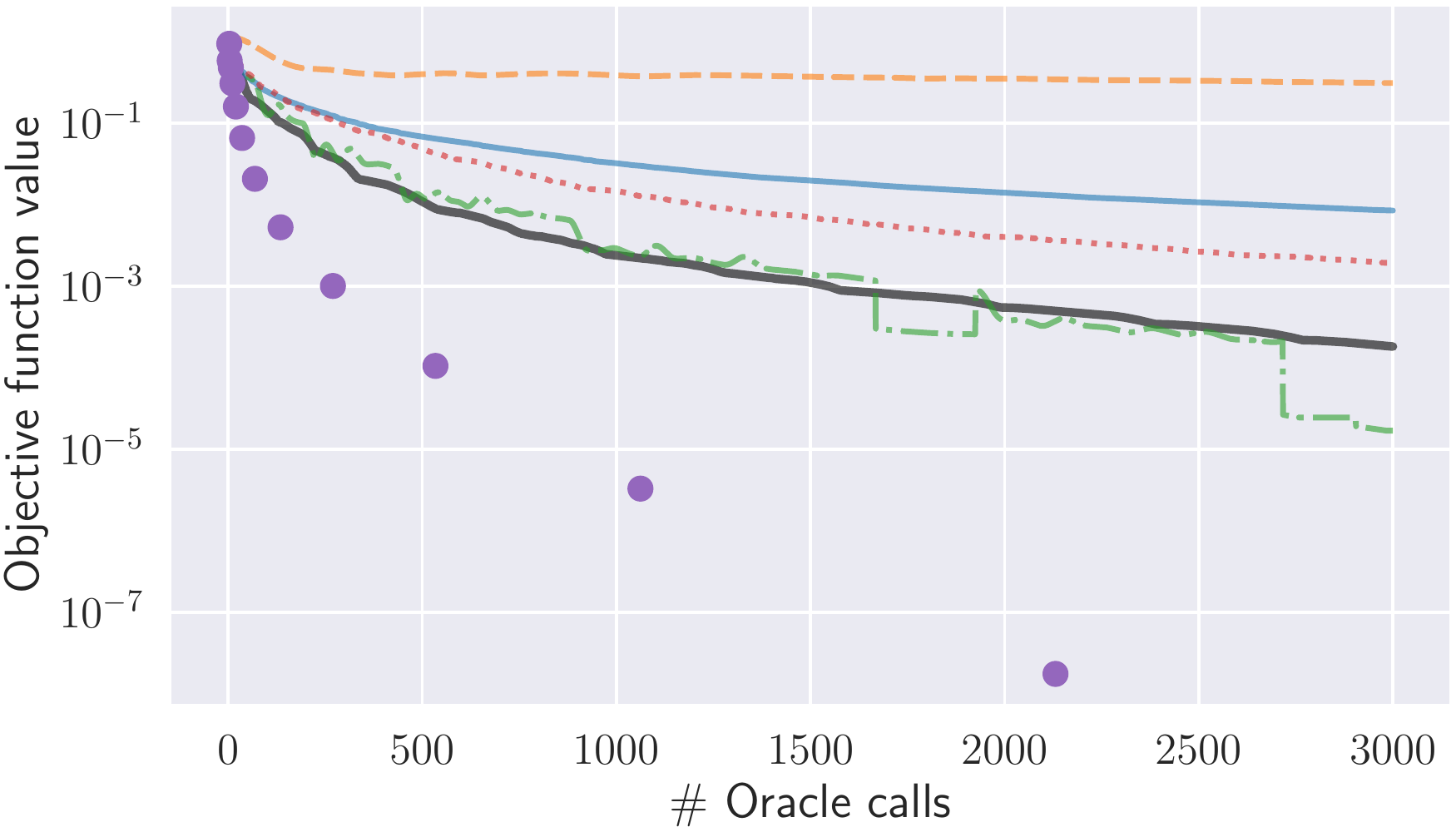}\hspace{0.1\linewidth}%
    \includegraphics[width=0.43\linewidth]{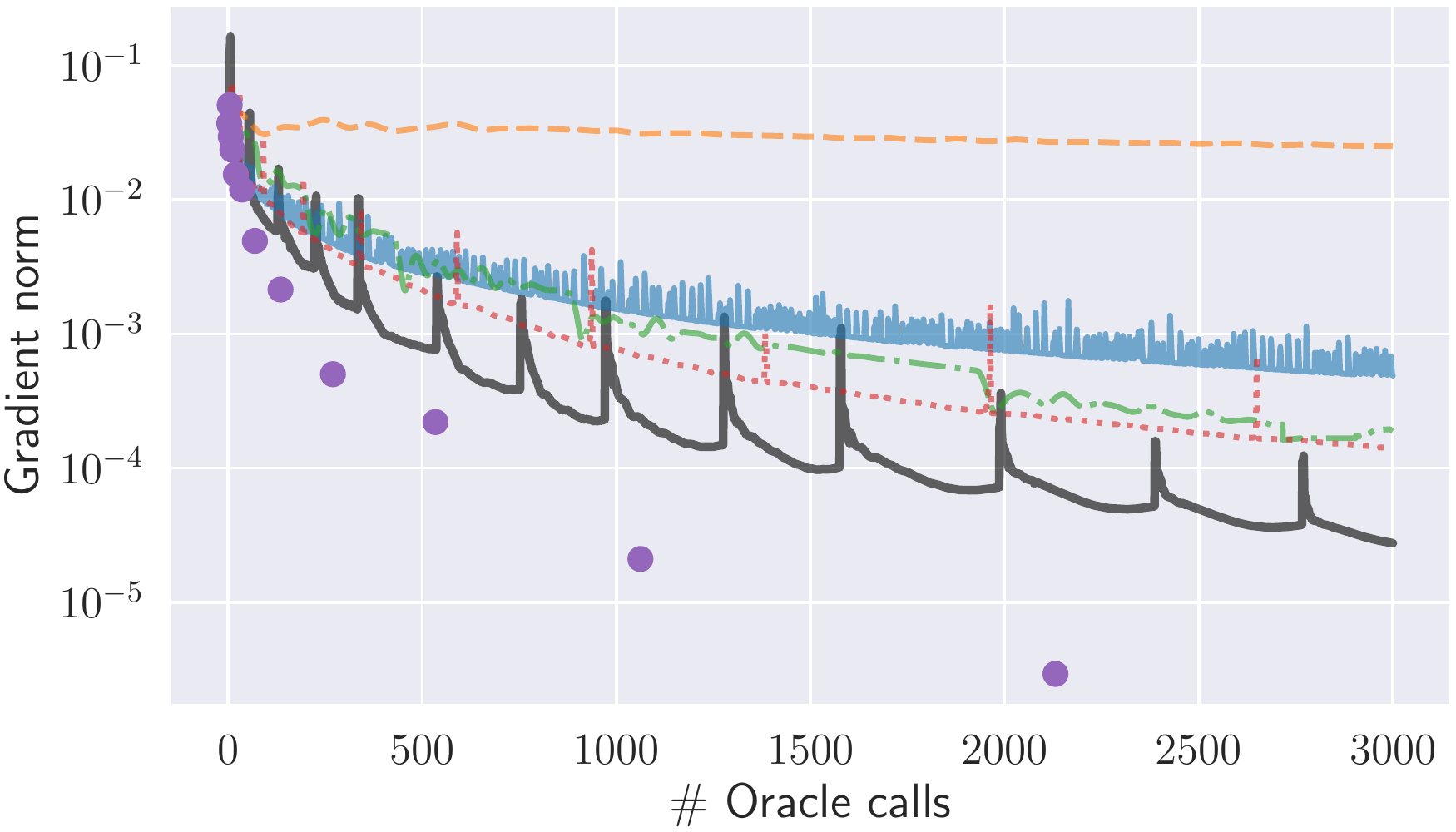}%
  }\par\medskip%
  \subfloat[Matrix completion \cref{eq:exp_mf} ($r = 200$)\label{fig:exp_completion_200}]{%
    \includegraphics[width=0.43\linewidth]{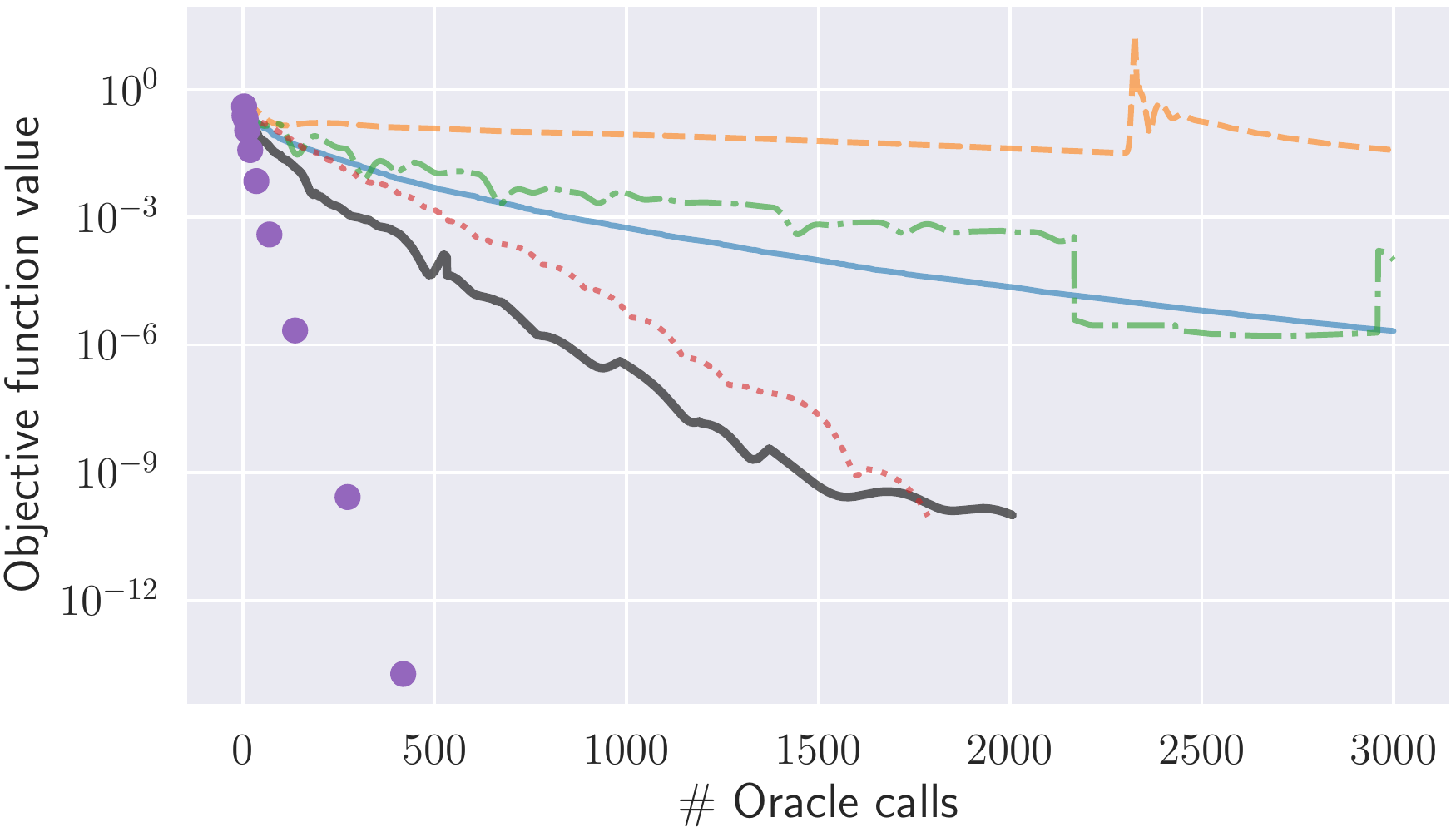}\hspace{0.1\linewidth}%
    \includegraphics[width=0.43\linewidth]{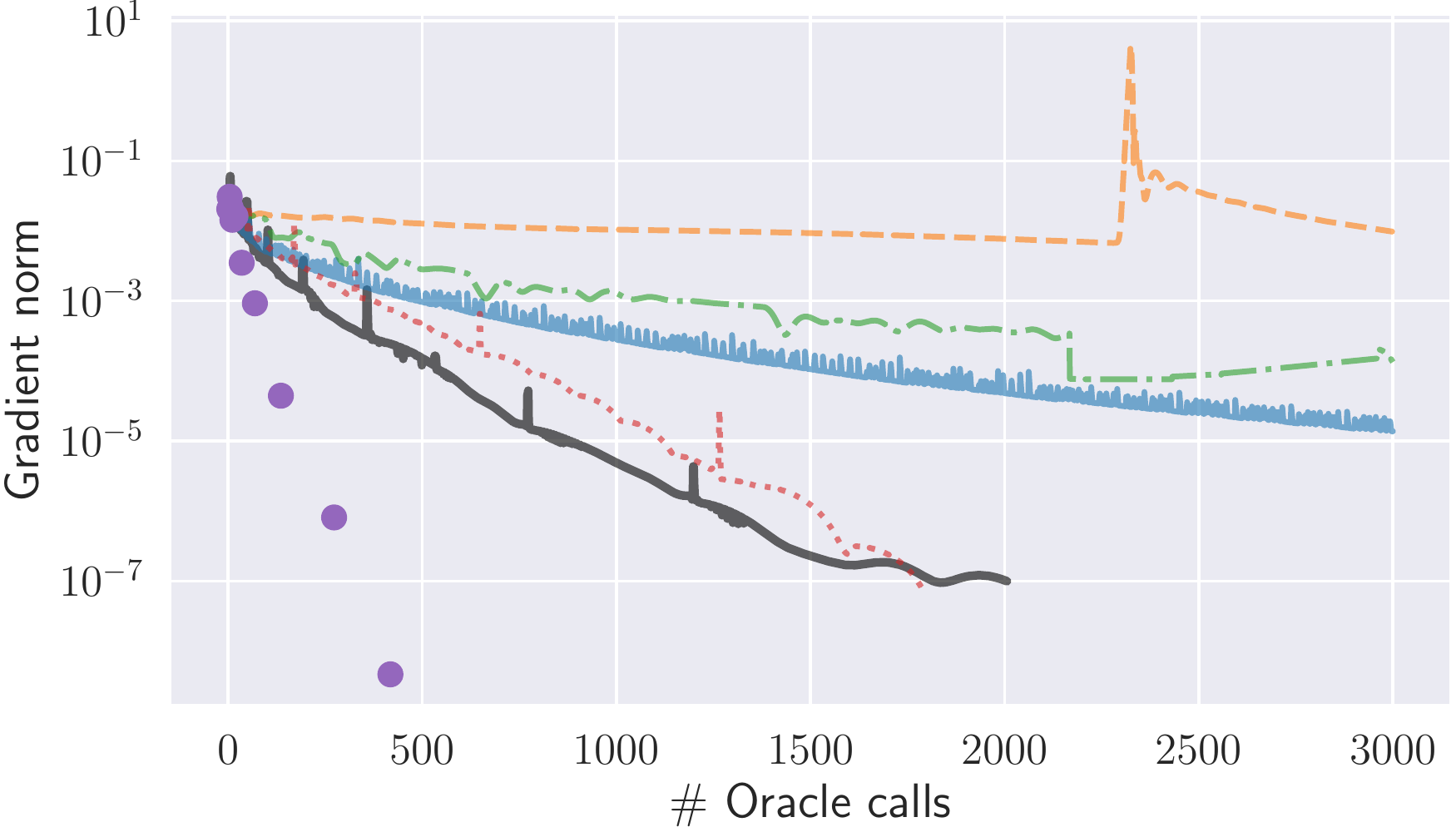}%
  }\par%
  \caption{Numerical results \revise{with machine learning instances.}\label{fig:experiments}}
\end{figure}

\begin{figure}
  \centering
  \includegraphics[height=3.5ex]{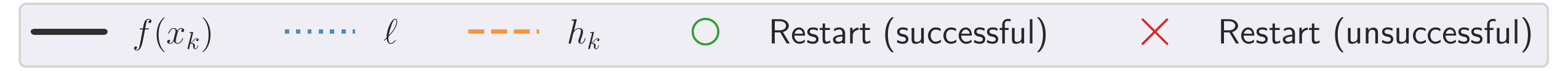}\par\medskip%
  \subfloat[Classification \cref{eq:exp_classification}\label{fig:exp_objlh_classification}]{%
    \includegraphics[width=0.43\linewidth]{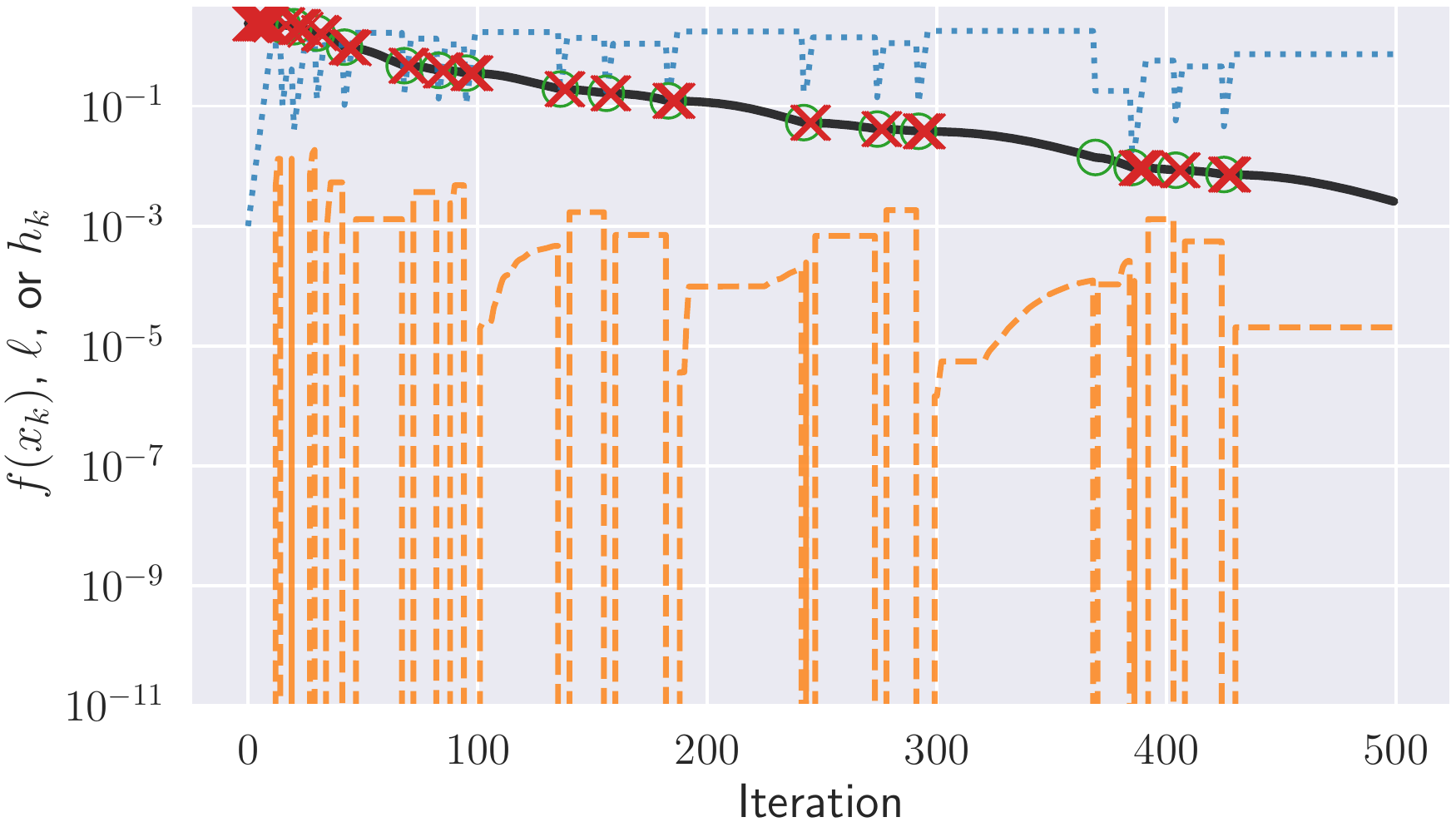}\hspace{0.1\linewidth}%
    \includegraphics[width=0.43\linewidth]{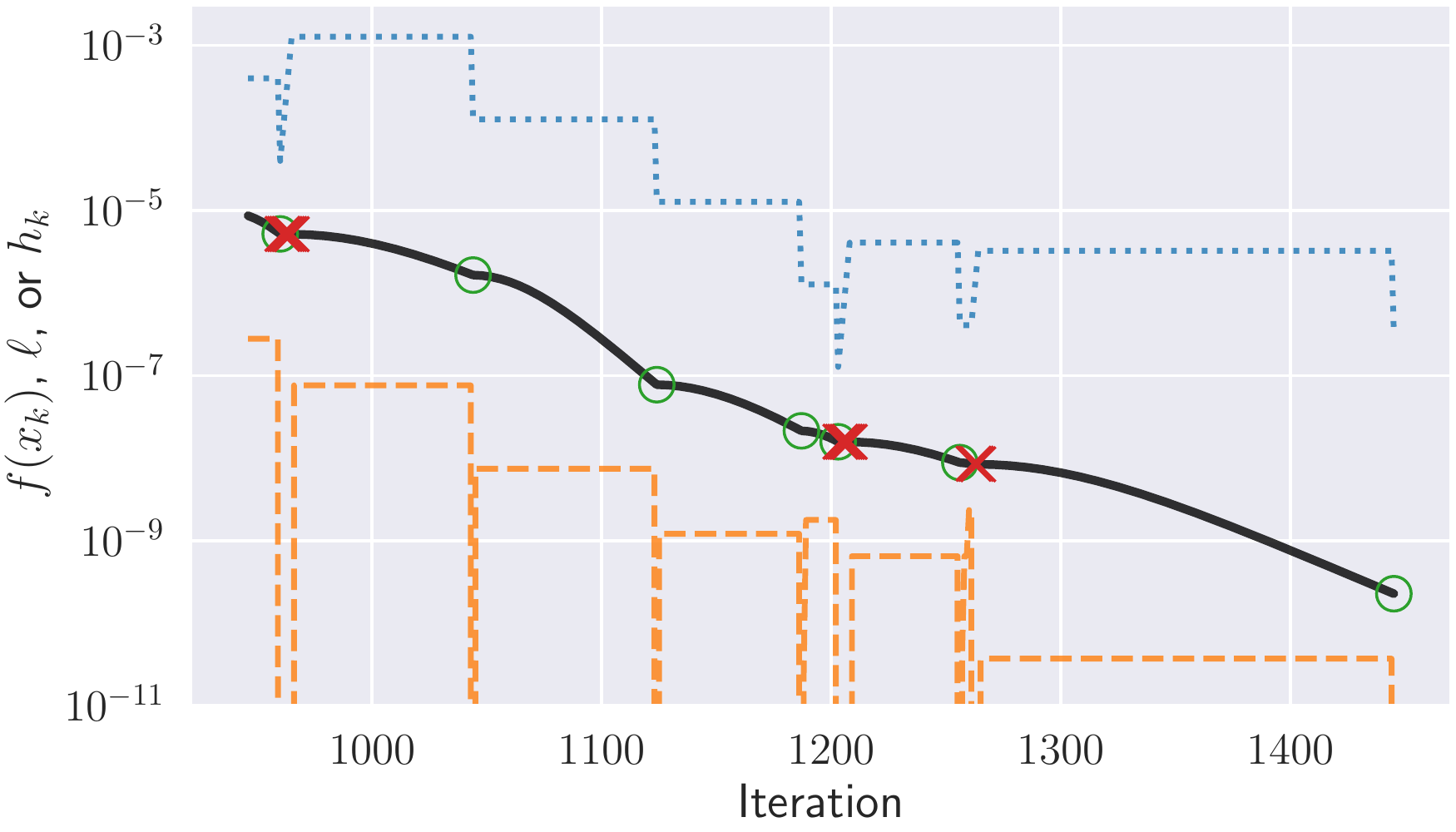}%
  }\par\medskip%
  \subfloat[Autoencoder \cref{eq:exp_ae}\label{fig:exp_objlh_autoencoder}]{%
    \includegraphics[width=0.43\linewidth]{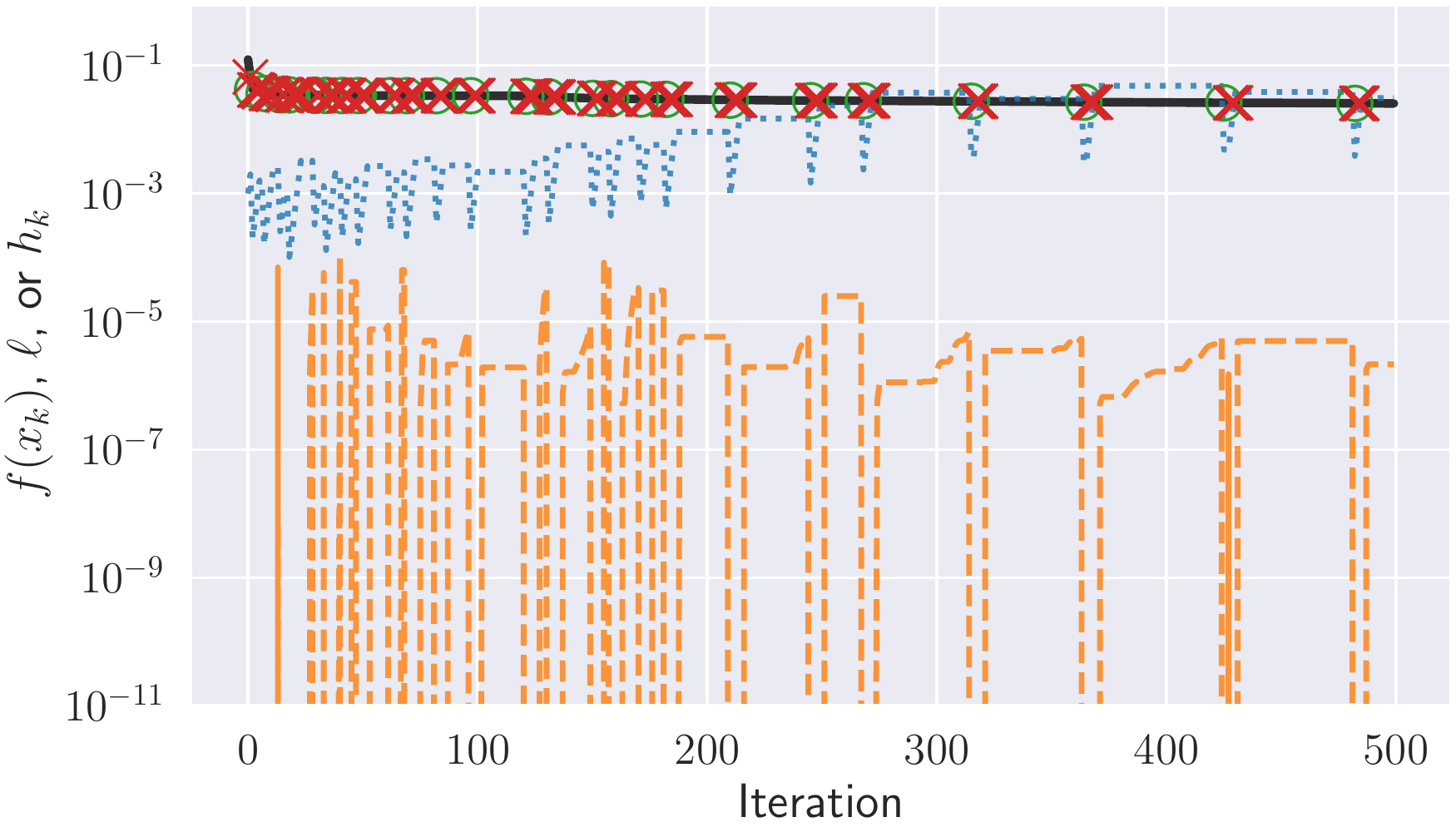}\hspace{0.1\linewidth}%
    \includegraphics[width=0.43\linewidth]{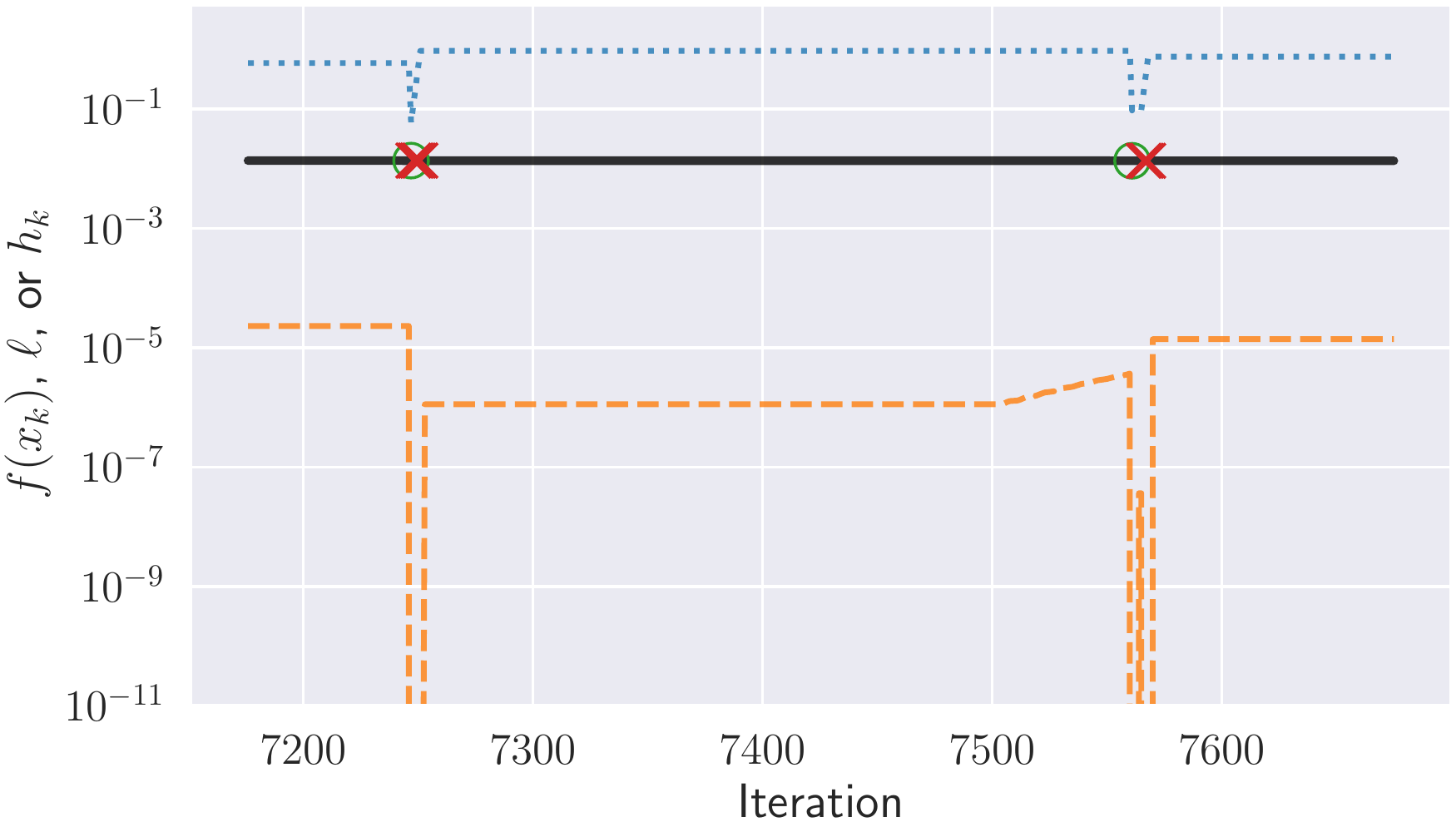}%
  }\par\medskip%
  \subfloat[Matrix completion \cref{eq:exp_mf} ($r = 100$)\label{fig:exp_objlh_completion_100}]{%
    \includegraphics[width=0.43\linewidth]{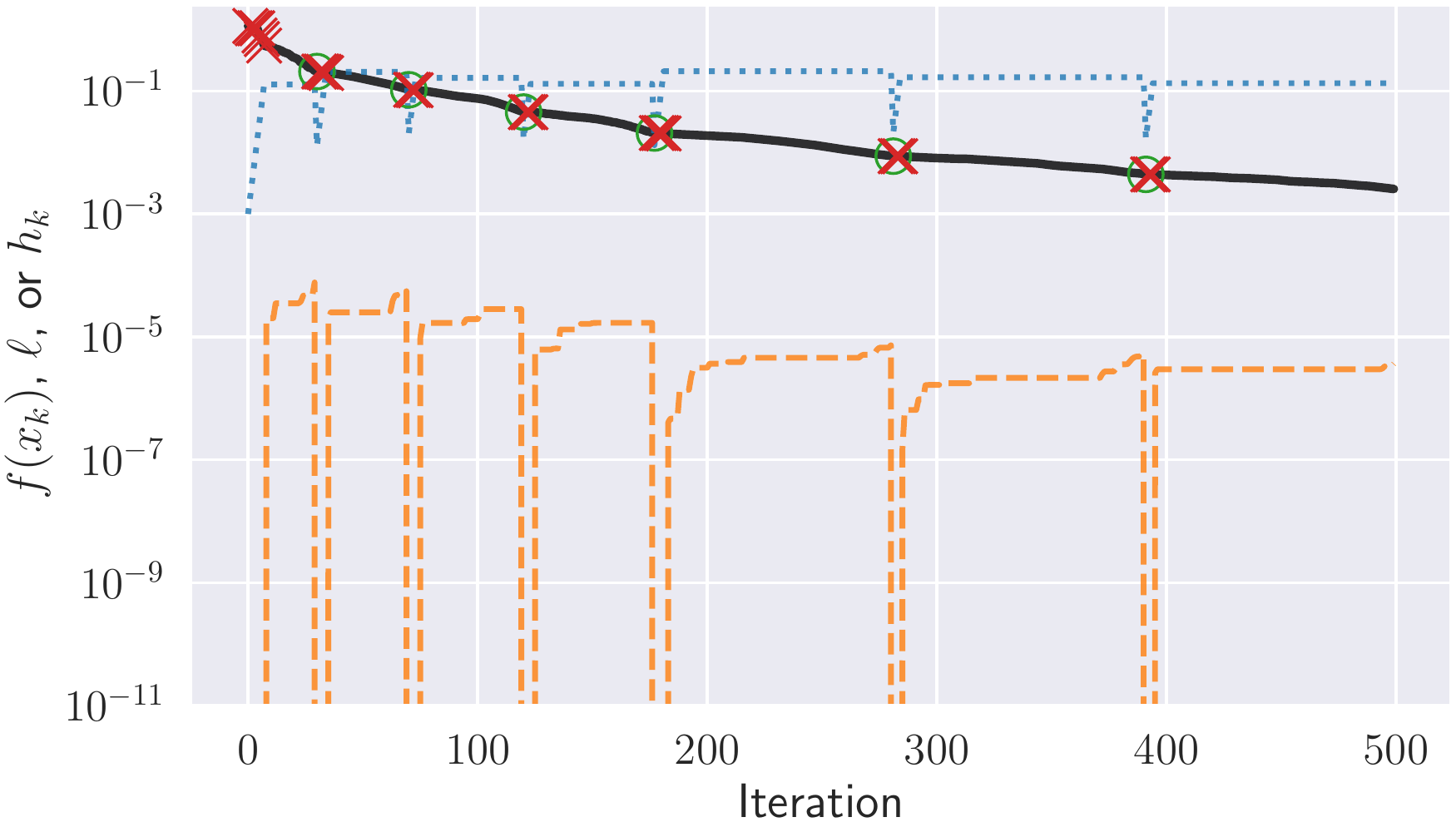}\hspace{0.1\linewidth}%
    \includegraphics[width=0.43\linewidth]{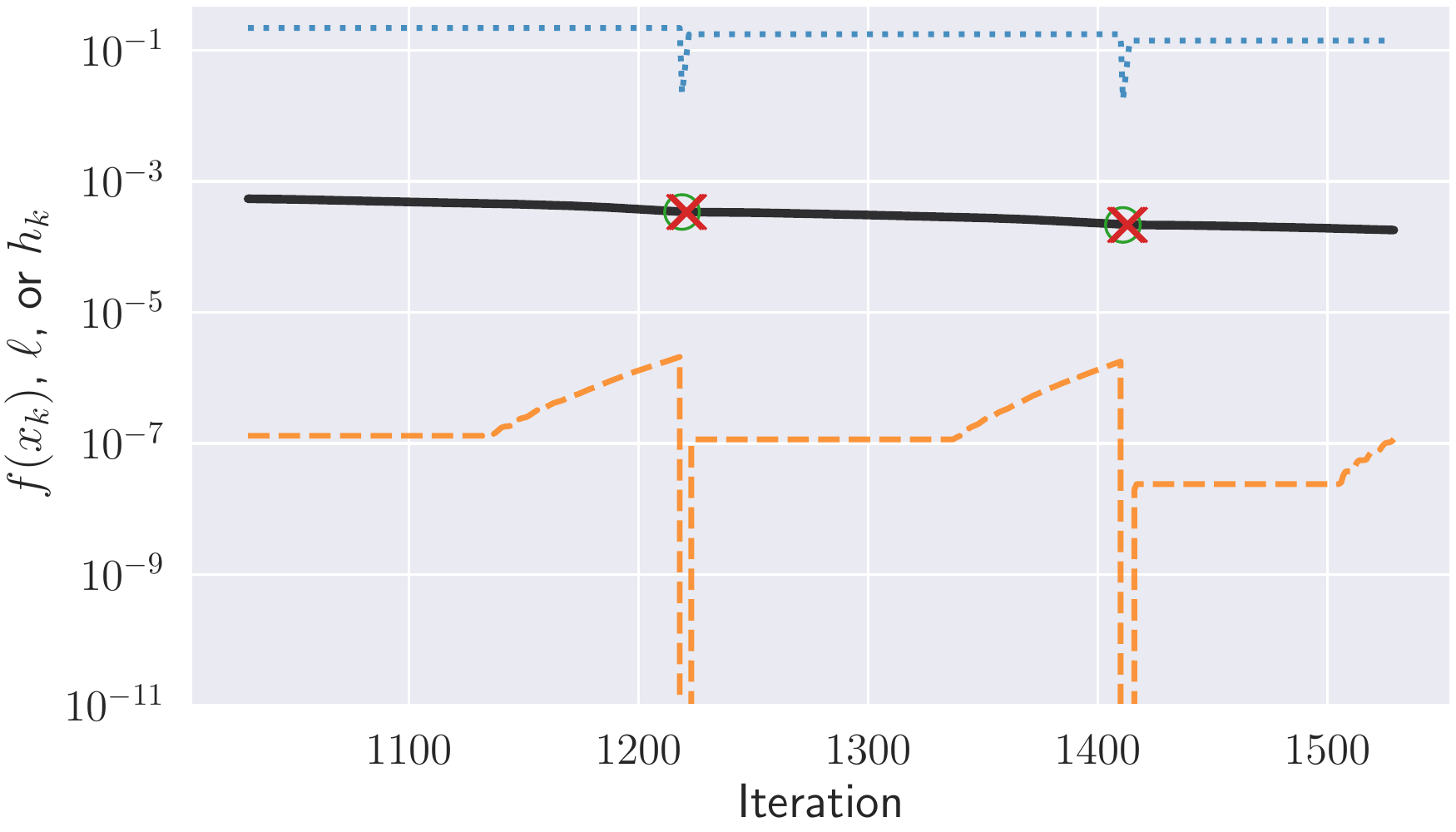}%
  }\par\medskip%
  \subfloat[Matrix completion \cref{eq:exp_mf} ($r = 200$)\label{fig:exp_objlh_completion_200}]{%
    \includegraphics[width=0.43\linewidth]{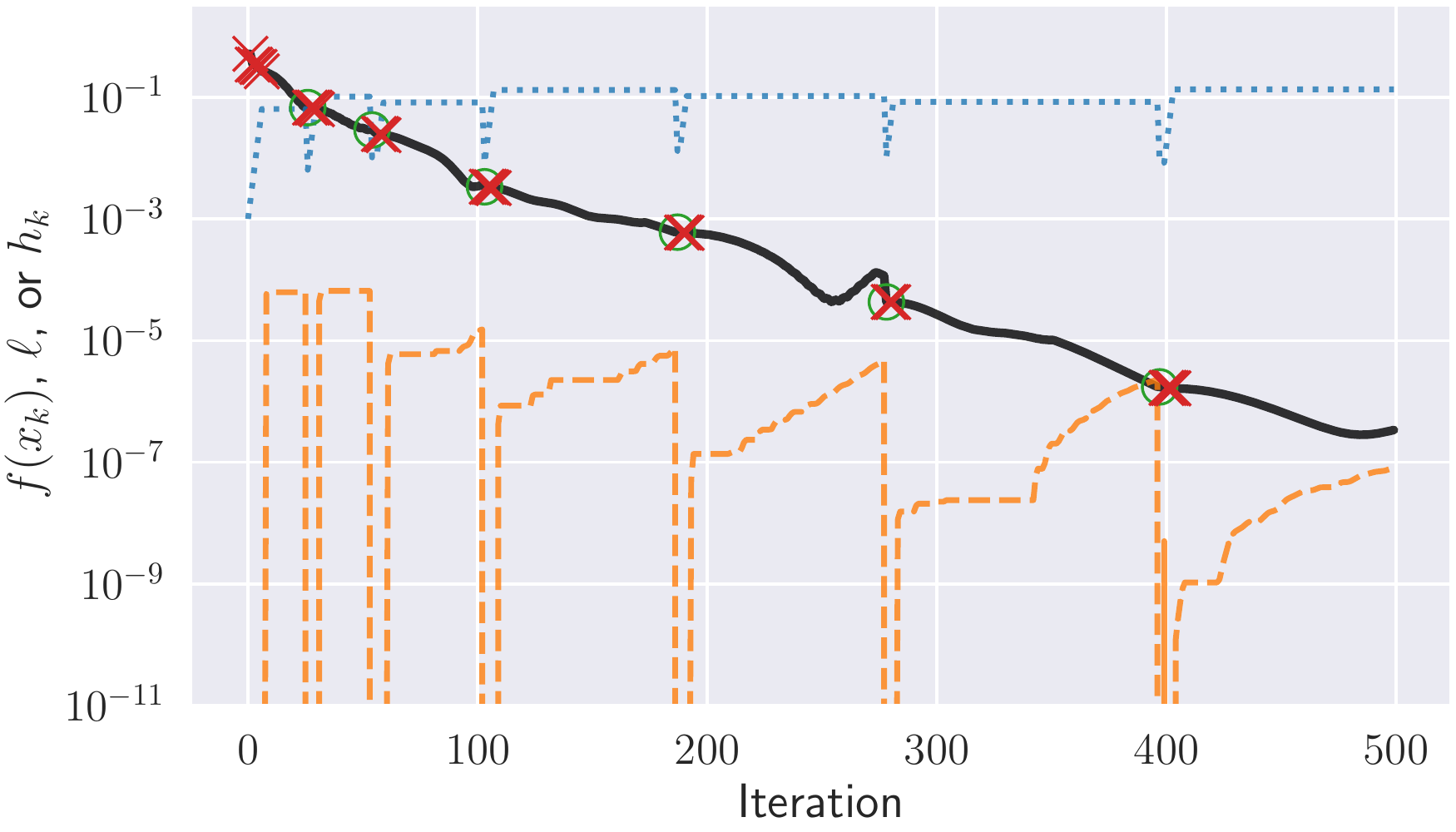}\hspace{0.1\linewidth}%
    \includegraphics[width=0.43\linewidth]{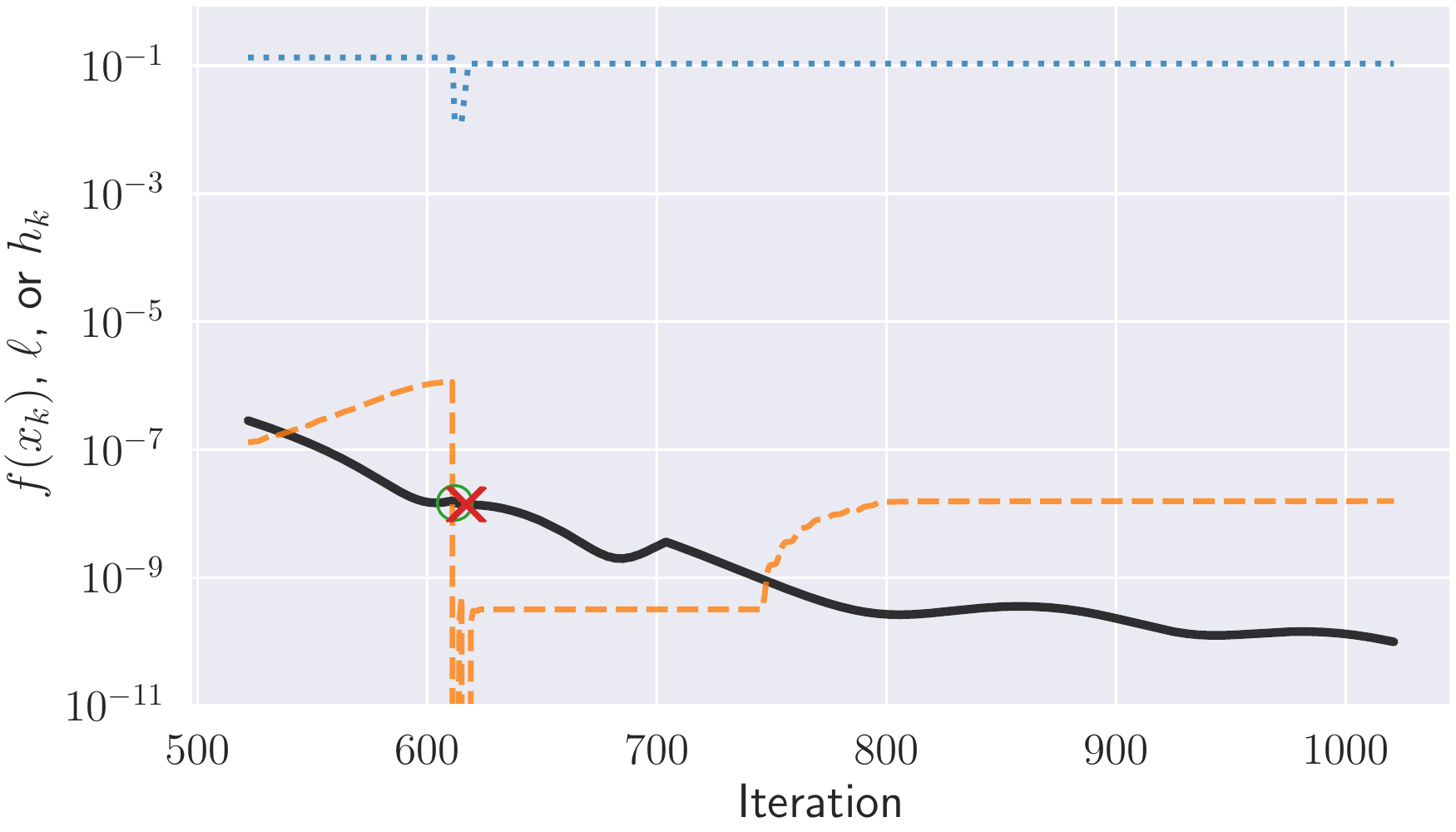}%
  }\par%
  \caption{
    \revise{
      The objective function value $f(x_k)$ and the estimates $\Lest$ and $\Hest_k$ at each iteration of the proposed method.
      The iterations at which a restart occurred are marked.
      Left: the first 500 iterations.
      Right: later 500 iterations.
      \label{fig:experiments_objlh}
    }
  }
\end{figure}



\appendix
\section{Omitted proofs}
\subsection{Proof of \cref{lem:gradient_jensen,lem:trapezoidal_rule_error}}
\label{sec:proof_lem_holder_hessian}
\begin{proof}[Proof of \cref{lem:gradient_jensen}]
  \revise{Since $f$ is twice differentiable, we have}
  \begin{align}
    \nabla f(z_i) - \nabla f(\bar z)
    &=
    \nabla^2 f(\bar z) (z_i - \bar z)
    + \int_0^1 \prn*{ \nabla^2 f(\bar z + t (z_i - \bar z)) - \nabla^2 f(\bar z) } (z_i - \bar z) dt,
  \end{align}
  and its weighted average gives
  \begin{align}
    \sum_{i=1}^n \lambda_i \nabla f(z_i) - \nabla f(\bar z)
    &=
    \sum_{i=1}^n \lambda_i
    \int_0^1 \prn*{ \nabla^2 f(\bar z + t (z_i - \bar z)) - \nabla^2 f(\bar z) } (z_i - \bar z) dt.
  \end{align}
  Therefore, we obtain the first inequality as follows:
  \begin{alignat}{2}
    \norm*{
      \sum_{i=1}^n \lambda_i \nabla f(z_i) - \nabla f(\bar z)
    }
    &\leq
    \sum_{i=1}^n \lambda_i
    \int_0^1 \norm*{ \nabla^2 f(\bar z + t (z_i - \bar z)) - \nabla^2 f(\bar z) } \norm*{z_i - \bar z} dt\\
    &\leq
    \sum_{i=1}^n \lambda_i
    \int_0^1 \Htrue{\nu} \norm*{t(z_i - \bar z)}^\nu \norm*{z_i - \bar z} dt
    &\quad&\by{definition \cref{eq:def_holder_constant}}\\
    &=
    \frac{\Htrue{\nu}}{1 + \nu} \sum_{i=1}^n \lambda_i \norm*{z_i - \bar z}^{1 + \nu}.
  \end{alignat}

  Next, we will prove the second inequality.
  H\"older's inequality gives
  \begin{align}
    \sum_{i=1}^n \lambda_i \norm*{z_i - \bar z}^{1 + \nu}
    &=
    \sum_{i=1}^n
    \lambda_i^{\frac{1 - \nu}{2}}
    \prn*{ \lambda_i \norm*{z_i - \bar z}^2 }^{\frac{1 + \nu}{2}}\\
    &\leq
    \prn*{\sum_{i=1}^n \lambda_i}^{\frac{1 - \nu}{2}}
    \prn*{
      \sum_{i=1}^n \lambda_i \norm*{\revise{z_i} - \revise{\bar z}}^2
    }^{\frac{1 + \nu}{2}}
    =
    \prn*{
      \sum_{i=1}^n \lambda_i \norm*{\revise{z_i} - \revise{\bar z}}^2
    }^{\frac{1 + \nu}{2}}
  \end{align}
  since $\sum_{i=1}^n \lambda_i = 1$.
  Furthermore, \revise{we have} $\sum_{i=1}^n \lambda_i \norm*{z_i - \bar z}^2 = \sum_{1 \leq i < j \leq n} \lambda_i \lambda_j \allowbreak \|z_i - z_j\|^2$ \revise{because}
  \begin{align}
    \sum_{1 \leq i < j \leq n} \lambda_i \lambda_j \norm*{z_i - z_j}^2
    &= 
    \frac{1}{2} \sum_{i,j = 1}^n \lambda_i \lambda_j \norm*{z_i - z_j}^2\\
    &= 
    \frac{1}{2} \sum_{i,j = 1}^n \lambda_i \lambda_j \norm*{z_i}^2
    + \frac{1}{2} \sum_{i,j = 1}^n \lambda_i \lambda_j \norm*{z_j}^2
    - \sum_{i,j = 1}^n \inner{\lambda_i z_i}{\lambda_j z_j}\\
    &= 
    \sum_{i=1}^n \lambda_i \norm*{z_i}^2 - \norm*{\bar z}^2
    =
    \sum_{i=1}^n \lambda_i \norm*{z_i - \bar z}^2,
  \end{align}
  which completes the proof.
\end{proof}

\begin{proof}[Proof of \cref{lem:trapezoidal_rule_error}]
  We obtain the desired result as follows:
  \begin{alignat}{2}
    &\mathInd
      f(x) - f(y)
      - \frac{1}{2} \inner*{\nabla f(x) + \nabla f(y)}{x - y}\\
    &=
      \int_0^1 \inner*{\nabla f(t x + (1 - t) y)}{x - y} dt
      - \frac{1}{2} \inner*{\nabla f(x) + \nabla f(y)}{x - y}
    \\
    &=
      \int_0^1 \inner[\Big]{\nabla f(t x + (1 - t) y) - t \nabla f(x) - (1 - t) \nabla f(y)}{x - y} dt\\
    &\leq
    \int_0^1 \norm[\Big]{\nabla f(t x + (1 - t) y) - t \nabla f(x) - (1 - t) \nabla f(y)} \norm*{x - y} dt\\
    &\leq
    \frac{\Htrue{\nu}}{1 + \nu}
    \int_0^1
    \prn*{
      t (1 - t)^{1 + \nu}
      + (1 - t) t^{1 + \nu}
    }
    \norm*{x - y}^{2 + \nu} dt
    &\quad&\by{\cref{lem:gradient_jensen}}\\
    &=
    \frac{2 \Htrue{\nu}}{(1 + \nu) (2 + \nu) (3 + \nu)} \norm*{x - y}^{2 + \nu}.
  \end{alignat}
  \revise{
    For the last inequality, we used \cref{lem:gradient_jensen} with $n = 2$, $z_1 = x$, $z_2 = y$, $\lambda_1 = t$, and $\lambda_2 = 1-t$, obtaining
    \begin{align}
      &\mathInd
      \norm*{
        \nabla f(t x + (1 - t) y) - t \nabla f(x) - (1 - t) \nabla f(y)
      }\\
      &\leq
      \frac{\Htrue{\nu}}{1 + \nu}
      \prn*{
        t \norm*{x - (t x + (1 - t) y)}^{1 + \nu}
        + (1-t) \norm*{y - (t x + (1 - t) y)}^{1 + \nu}
      }\\
      &=
      \frac{\Htrue{\nu}}{1 + \nu}
      \prn*{
        t (1 - t)^{1 + \nu}
        + (1 - t) t^{1 + \nu}
      }
      \norm*{x - y}^{1 + \nu}.
    \end{align}
  }
\end{proof}

\subsection{Proof of \cref{eq:grad_norm_xbar_upperbound}}
\label{sec:proof_eq_grad_norm_xbar_upperbound}
Inequality~\cref{eq:grad_norm_xbar_upperbound} is a modification of \citep[Eq.~(22)]{marumo2022parameter}, which was originally for an accelerated gradient method with Lipschitz continuous Hessians, for our heavy-ball method with H\"older continuous Hessians.
The following proof of \cref{eq:grad_norm_xbar_upperbound} is based on the one for \citep[Eq.~(22)]{marumo2022parameter} but is easier, thanks to our simple choice of $\theta_k = 1$.
\begin{proof}
  \revise{Using the triangle inequality and \cref{lem:gradient_jensen} with $n = k$, $z_i = x_i$, and $\lambda_i = \frac{1}{k}$ yields}
  \begin{alignat}{2}
    \norm*{
      \nabla f (\bar x_k)
    }
    &
    \reviset{
    {}\leq
    \frac{1}{k}
    \norm*{
      \sum_{i=0}^{k-1} \nabla f (x_i)
    }
    +
    \norm*{
      \nabla f (\bar x_k) - \frac{1}{k} \sum_{i=0}^{k-1} \nabla f (x_i)
    }
    }
    \\
    &\leq
    \frac{1}{k}
    \norm*{
      \sum_{i=0}^{k-1} \nabla f (x_i)
    }
    +
    \frac{\Htrue{\nu}}{1 + \nu}
    \prn[\Bigg]{
      \frac{1}{k^2} \sum_{0 \leq i < j < k} \norm*{x_i - x_j}^2
    }^{\frac{1 + \nu}{2}},
    \label{eq:grad_at_average_upperbound}
  \end{alignat}
  and we will evaluate each term.
  First, it follows from the update rule~\cref{eq:update_v} that
  \begin{align}
    \sum_{i=0}^{k-1}
    \nabla f(x_i)
    &=
    \Lest
    \sum_{i=0}^{k-1} (v_i - v_{i+1})
    =
    \Lest (v_0 - v_k)
    =
    - \Lest v_k.
  \end{align}
  Therefore, the first term on the right-hand side of \cref{eq:grad_at_average_upperbound} reduces to $\frac{\Lest}{k} \norm*{v_k}$.
  Next, we bound the second term.
  Using the triangle inequality and the Cauchy--Schwarz inequality yields
  \begin{align}
    \sum_{0 \leq i < j < k} \norm*{x_i - x_j}^2
    \leq
    \sum_{0 \leq i < j < k}
    \prn*{
      \sum_{l=i+1}^j \norm*{v_l}
    }^2
    &\leq
    \sum_{0 \leq i < j < k}
    \prn*{\sum_{l=i+1}^j 1^2}
    \prn*{\sum_{l=i+1}^j \norm*{v_l}^2}\\
    &=
    \sum_{0 \leq i < j < k}
    (j - i)
    \sum_{l=i+1}^j \norm*{v_l}^2,
  \end{align}
  and interchanging the summations leads to
  \begin{align}
    \revise{
    =
    \sum_{0 \leq i < l \leq j < k}
    (j - i) \norm*{v_l}^2
    }
    &=
    \sum_{l=1}^{k-1}
    \prn[\Bigg]{
      \sum_{i=0}^{l-1}
      \sum_{j=l}^{k-1}
      (j - i)
    }
    \norm*{v_l}^2\\
    &=
    \frac{k}{2}
    \sum_{l=1}^{k-1}
    l (k - l)
    \norm*{v_l}^2
    \leq
    \frac{k}{2}
    \sum_{l=1}^{k-1} \frac{k^2}{4} \norm*{v_l}^2
    \leq
    \frac{k^3}{8} S_k.
  \end{align}
  We obtain the desired result by evaluating the right-hand side of \cref{eq:grad_at_average_upperbound}.
\end{proof}

\section*{Acknowledgments}
This work was partially supported by JSPS KAKENHI (19H04069) and JST ERATO (JPMJER1903).

\section*{Conflict of interest}
The authors declare that they have no conflict of interest.

\bibliographystyle{abbrvnat}
\bibliography{../../../../../../../tex/myrefs}

\end{document}